\documentclass[11pt,a2paper]{article}

\usepackage[T1]{fontenc} 
\usepackage{lmodern}

\usepackage[utf8]{inputenc}
\usepackage{amsfonts} 
\usepackage{dsfont}
\usepackage{amsmath}
\usepackage{amsmath,latexsym}
\usepackage{amsmath,amssymb}
\usepackage{amsmath,amsthm}
\usepackage[english]{babel}
\usepackage{moreverb}
\usepackage[]{graphics}
\usepackage[]{latexsym} 
\usepackage{amscd}
\usepackage{color}
\usepackage{lineno}
\usepackage{ae}
\usepackage{enumerate}

 \usepackage[figuresright]{rotating}
\usepackage{bigints}
 \graphicspath{{FIGURES/}}
 \makeatletter
\newcommand{\xrightharpoonup}[2][]{\ext@arrow 0359\rightharpoonupfill@{#1}{#2}}
\makeatother
\usepackage{hyperref}
\hypersetup{
	colorlinks=true,
	linkcolor=blue,
	filecolor=magenta,      
	urlcolor=cyan,
}

\usepackage{graphics}
\usepackage{graphicx} 
 
\newcommand{\ds}{\displaystyle}
\newcommand{\ve}{\varepsilon}

\def\R{\mathbb{R}}

\def\P{\mathbb{P}}
\def\E{\mathbb{E}}

\def\N{{\rm I\hspace{-0.50ex}N} }
\def\N{\mathbb{N}}

\textheight 23cm \textwidth 17cm \voffset= -2.5cm \hoffset=-2cm

\newtheorem{lem}{Lemma}[section]
\newtheorem{thm}[lem]{Theorem}

\newtheorem{defi}[lem]{Definition}
\newtheorem{prop}[lem]{Proposition}

\newtheorem{rmq}[lem]{Remark}

 \title{\bf $L^s$-rate optimality of dilated$/$contracted $L^r$-optimal and greedy quantization sequences }
\author{\textsc{Rancy El Nmeir} \thanks{Sorbonne Universit\'e, Laboratoire de Probabilit\'e, Statistique et Mod\'elisation, Campus Pierre et Marie Curie, case 158, 4, pl. Jussieu, F-75252 Paris Cedex 5, France.} \thanks{Universit\'e Saint-Joseph de Beyrouth, Laboratoire de Math\'ematiques et Applications, Unit\'e de recherche Math\'ematiques et mod\'elisation, B.P. 11-514 Riad El Solh Beyrouth 1107
2050, Liban.}}

\begin{document}
	\date{}
	\maketitle
	
	\begin{abstract}
	We investigate some $L^s$-rate optimality properties of dilated/contracted $L^r$-optimal quantizers and $L^r$-greedy quantization sequences $(\alpha^n)_{n \geq 1}$ of a random variable $X$. We establish, for different values of $s$, $L^s$-rate optimality results for $L^r$-optimally dilated/contracted greedy quantization sequences  $(\alpha^n_{\theta,\mu})_{n \geq 1}$ defined by $\alpha^n_{\theta,\mu}=\{\mu+\theta (\alpha_i-\mu), \alpha_i \in \alpha^{(n)}\}$. We lead a specific study for $L^r$-optimal greedy quantization sequences of radial density distributions and show that they are $L^s$-rate optimal for $s \in (r,r+d)$ under some moment assumption. Based on the results established in $\cite{Sagna08}$ for $L^r$-optimal quantizers, we show, for a larger class of distributions, that the dilatation $(\alpha^n_{\theta,\mu})_{n \geq 1}$ of an $L^r$-optimal quantizer is $L^s$-rate optimal for $s < r+d$. We show, for various probability distributions, that there exists a parameter $\theta^*$ for which the dilated quantization sequence satisfy the so-called {\em $L^s$-empirical measure} theorem and present an application of this approach to numerical integration.
	\end{abstract}
	\paragraph{Keywords :} Optimal quantization; greedy quantization sequence; rate optimality; radial density; Zador theorem; Pierce Lemma; empirical measure theorem, numerical integration. 
	\bigskip
\label{dilatationgreedy}
\paragraph{Abstract}
	We investigate some $L^s$-rate optimality properties of dilated/contracted $L^r$-optimal quantizers and $L^r$-greedy quantization sequences $(\alpha^n)_{n \geq 1}$ of a random variable $X$. We establish, for different values of $s$, $L^s$-rate optimality results for $L^r$-optimally dilated/contracted greedy quantization sequences  $(\alpha^n_{\theta,\mu})_{n \geq 1}$ defined by $\alpha^n_{\theta,\mu}=\{\mu+\theta (\alpha_i-\mu), \alpha_i \in \alpha^{(n)}\}$. We lead a specific study for $L^r$-optimal greedy quantization sequences of radial density distributions and show that they are $L^s$-rate optimal for $s \in (r,r+d)$ under some moment assumption. Based on the results established in $\cite{Sagna08}$ for $L^r$-optimal quantizers, we show, for a larger class of distributions, that the dilatation $(\alpha^n_{\theta,\mu})_{n \geq 1}$ of an $L^r$-optimal quantizer is $L^s$-rate optimal for $s < r+d$. We show, for various probability distributions, that there exists a parameter $\theta^*$ for which the dilated quantization sequence satisfy the so-called {\em $L^s$-empirical measure} theorem and present an application of this approach to numerical integration.
\section{Introduction}
The aim of this paper is, on the one hand, to extend some ``robustness'' results of optimal quantizers to a much wider class of   distributions and, on the other hand, to establish similar results for greedy quantization sequences introduced in $\cite{LuPa15}$ and developed in \cite{papiergreedy}. Let $L^r_{\mathbb{R}^d}(\mathbb{P})$ (or simply  $L^r (\mathbb{P})$), $r \in (0, +\infty)$, denote the set of $d$-dimensional random vectors $X$ defined on the probability space $(\Omega,\mathcal{A},\mathbb{P})$ with distribution $P=\P_X$ and such that $\E|X|^r<+\infty$ (for any norm $|\cdot|$ on $\R^d$). Optimal vector quantization consists in finding the best approximation of a multidimensional random vector $X$ by a random variable $Y$ taking at most a finite number $n$ of values. Consider $\Gamma=\{x_1,\ldots,x_n\}$ a $d$-dimensional grid of size $n$. The principle is to approximate $X$ by $\pi_{\Gamma}(X)$ where $\pi_{\Gamma}:\R^d \rightarrow \Gamma$ is a nearest neighbor projection defined by \[\pi_{\Gamma} (\xi) = \sum_{i=1}^{n} x_i \mathds{1}_{W_i(\Gamma)}(\xi)\] 
where $\big(W_i(\Gamma)\big)_{1\leq i\leq n}$ is a so-called {\em Vorono\"i partition} of $\R^d$ induced by $\Gamma$ i.e. a Borel partition satisfying
\begin{equation}
	\label{Voronoicells}
	W_{i}(\Gamma)  \subset \big\{\xi \in \mathbb{R}^d : |\xi-x_i|\leq  \min_{j\neq i} |\xi-x_j|\big\}, \qquad i =1,\ldots,n.
	\end{equation}Then, 
	\begin{equation} 
	\widehat{X}^{\Gamma} =\pi_{\Gamma}(X) := \sum_{i=1}^{n}x_i \mathds{1}_{W_{i}(\Gamma)}(X)
	\end{equation}
	is called the {\em Vorono\"i quantization} of $X$.
	The $L^r$-quantization error induced when replacing $X$ by its quantization $\widehat X^{\Gamma}$ is naturally defined by
	\begin{equation} 
	\label{quanterror}
	e_r(\Gamma,X)  =  \|X-\pi_{\Gamma}(X)\|_r = \|X-\widehat{X}^{\Gamma}\|_r = \left \|\min_{1\leq i \leq n} |X - x_i|\right \|_r
	\end{equation}
	where $\|.\|_r$ denotes the $L^r(\P)$-norm (or quasi-norm if $0<r<1$). Consequently, the optimal quantization problem at {\em level} $n$ boils down to finding the grid $\Gamma^n$ of size $n$ that minimizes this error, i.e.
	\begin{equation}
	e_{r,n}(X) \; = \; \inf_{\Gamma, \; {\rm card}(\Gamma) \leq n} e_r(\Gamma,X).
	\end{equation}
	where ${\rm card}(\Gamma)$ denotes the cardinality of $\Gamma$. The existence of a solution to this problem and the convergence of $e_{r,n}(X)$ to $0$ at an $\mathcal{O}(n^{-\frac 1d})$-rate of convergence when the level (or size) $n$ goes to $+\infty$ have been shown (see \cite{GraLu00,Pages15,Pages18} for example). The  convergence to $0$ of such an error induced by a  sequence $(\Gamma^n)_{n\geq 1}$ of $L^r$-optimal quantizers  of (the distribution of) $X$ is an easy consequence of the separability of $\R^d$. Its rate of convergence to $0$ is a much more challenging problem that has been solved in several steps over between $1950$'s and the early $2000$'s and the main results in their final form are summed up in Section \ref{tools}.\\

However, numerical implementation of multidimensional $L^r$-optimal quantizers requires to optimize grids of size $n \times d$ which becomes computationally too costly
when $n$ or $d$ increase. So, a greedy version of optimal vector quantization (which is easier to handle) has been introduced in \cite{LuPa15} as a sub-optimal solution to the quantization problem. It consists in building a {\it sequence} of points $(a_n)_{n \geq 1}$ in $\R^d$ which is recursively $L^r$-optimized level by level, in the sense that it minimizes the $L^r$-quantization error at each iteration in a greedy way. This means that, having the first $n$ points $a^{(n)}=\{a_1,\ldots,a_n\}$ for $n\geq 1$, we add, at the $(n+1)$-th step, the point $a_{n+1}$ solution to
\begin{equation}
\label{dilat:greedydef}
\qquad a_{n+1} \in \mbox{argmin}_{\xi \in \R^d} \, e_r(a^{(n)} \cup \{\xi\}, X),
\end{equation}  
noting that $a^{(0)} = \varnothing$, so that {\em $a_1$ is simply an/the $L^r $-median of the distribution $P$ of $X$}. 
The sequence $(a_n)_{n \geq 1}$ is called an $L^r$-optimal greedy quantization sequence for $X$ or its distribution $P$. It is proved in $\cite{LuPa15}$ that the problem $(\ref{dilat:greedydef})$ admits, as soon as $X$ lies in $L_{\R^d}(\P)$,  a solution  $(a_n)_{n \geq 1}$ which may be not unique due to the dependence of greedy quantization on the symmetry of the distribution $P$. The corresponding $L^r$-quantization error $e_r(a^{(n)},X)$ is decreasing w.r.t $n$ and converges to $0$ when $n$ goes to $+\infty$. Greedy quantization sequences have an optimal convergence rate to $0$ compared to optimal quantizers, in the sense that the grids $\{a_1,\ldots,a_n\}$ are $L^r$-rate optimal, i.e. the corresponding quantization error converges with an $\mathcal{O}(n^{-\frac 1d})$-rate of convergence. This was established first in $\cite{LuPa15}$ for a rather wide family of absolutely continuous distribution using some maximal functions approximating the density $f$ of $P$. Then, it has been extended in $\cite{papiergreedy}$ to a much larger class of probability density functions where the authors relied on an exogenous auxiliary probability distribution $\nu$ on $(\R^d,\mathcal{B}or(\R^d))$ satisfying a certain control on balls, the result is recalled in Section \ref{tools}.\\

A very important field of applications is quantization-based numerical integration where we approximate an expectation $\E h(X)$ of a function $h$ on $\R^d$ by some cubature formulas. The error bounds induced by such numerical schemes always involve the $L^s$-quantization error induced by the approximation of $X$ by its (optimal or greedy) quantization 
usually with $s \geq r$. This problem also appears when we use optimal quantization as a space discretization scheme of ARCH models, namely the Euler scheme of a diffusion devised to solve stochastic control, optimal stopping or filtering problems (see \cite{PaPhPr03,PaPhPr04} for example) where,  in order to estimate the upper error bounds induced by such approximation schemes, one needs to evaluate $L^s$-quantization errors induced by $L^r$-optimal (or asymptotically optimal) quantizers for $s \geq r$.
So, one needs to see whether such quantizers sharing $L^r$-optimality properties preserve their performances in $L^s$, this is called the {\em distortion mismatch} problem and was deeply studied in $\cite{mismatch08}$ for sequences of optimal quantizers. As for greedy quantization sequences, it was first investigated in $\cite{LuPa15}$ and extended later in $\cite{papiergreedy}$ as already mentioned. \medskip\\

Another approach to this problem was considered in $\cite{Sagna08}$ where the author was interested in the fact that an appropriate dilatation or contraction of a (sequence of) $L^r$-optimal quantizer(s) $(\Gamma^n)_{n \geq 1}$ remains $L^s$-rate optimal. This study was also motivated by its application to the algorithms of designing $L^s$-optimal quantizers for $s\neq 2$. In fact, several stochastic procedures, like Lloyd's algorithm or the Competitive Learning Vector Quantization algorithm (CLVQ), are based on the stationarity property satisfied by optimal quadratic quantizers and designed for $s=2$. However, when $s >2$, these procedures become unstable and difficult and their convergence is very dependent on the initialization. So, in order to design $L^s$-optimal quantizers, $s>2$, one can use the $L^2$-dilated quantizers to initialize the algorithms and speed their convergence. \\

In this paper, based on the same motivations, we are interested in establishing $L^s$-rate optimality results of dilatations/contractions of $L^r$-optimal greedy quantization sequences. 
Moreover, we extend the original results established for $L^r$-optimal quantizers in $\cite{Sagna08}$ to a larger class of distributions taking advantage of new tools developed in $\cite{papiergreedy}$ to analyze quantization errors. These tools are based on auxiliary probability distributions with a certain property of control on balls. In other words, if $(\alpha^n)_{n \geq 1}$ is a sequence of $L^r$-optimal quantizers or an $L^r$-optimal greedy quantization sequence, then the sequence $(\alpha^n_{\theta,\mu})_{n \geq 1}$ defined, for every $\theta >0$ and $\mu \in \R^d$, by $\alpha_{\theta,\mu}^n=\{\mu+\theta(a_i-\mu),\; a_i \in \alpha^n\}$, is $L^s$-rate optimal for $s \neq r$.  A lower bound of the $L^s$-quantization error $e_s(\alpha^n_{\theta,\mu},P)$ was given in $\cite{Sagna08}$ for $L^r$-optimal quantizers and it also holds for greedy quantization sequences: If $P=f.\lambda_d$, then for every $\theta>0$, $\mu \in \R^d$ and $n \geq 1$,
\begin{equation}
\label{lowerbound}
\liminf _{n \rightarrow +\infty} n^{\frac 1d} e_s(\alpha^n_{\theta,\mu},P) \geq Q_{r,s}^{\text{Inf}}(P,\theta) \end{equation}
where \begin{align}
Q_{r,s}^{\text{Inf}}(P,\theta) &= \theta^{1+\frac ds} \widetilde{J}_{s,d} \left( \int_{\R^d} f^{\frac{d}{d+r}}d\lambda_d \right)^{\frac 1d} \left(\int_{\{f > 0\}} f_{\theta,\mu} f^{-\frac{s}{d+r}}d\lambda_d\right)^{\frac1s}\nonumber \\
&=\theta \widetilde{J}_{s,d} \left( \int_{\R^d} f^{\frac{d}{d+r}}d\lambda_d \right)^{\frac 1d} \left(\int_{\{f > 0\}} f^{-\frac{s}{d+r}}dP_{\theta,\mu}\right)^{\frac1s}
\end{align}
where $\widetilde{J}_{s,d} = \ds \inf_{n \geq 1} n^{\frac{1}{d}} e_{s,n}(U([0,1]^d)) \in (0,+ \infty)$ is the constant given in Zador's Theorem (see (\ref{Zadorgeneral})) and $f_{\theta,\mu}$ denotes the function $f_{\theta,\mu}(x)=f(\mu+\theta(x-\mu))$. Likewise, if $X \sim P=f.\lambda_d$, then $P_{\theta,\mu}$ denotes the probability distribution of the random variable $\frac{X-\mu}{\theta}+\mu$ and $dP_{\theta,\mu}=\theta^d f_{\theta,\mu}.d\lambda_d$. Our goal is then to estimate upper bounds of this error. For the $L^r$-dilated/contracted greedy quantization sequences, we rely on auxiliary probability distributions satisfying a certain control criterion on balls and establish upper estimates depending on the values of $s$. We obtain Pierce type universal non-asymptotic results of $L^s$-rate optimality of a greedy quantization sequence $(\alpha^n_{\theta,\mu})_{n \geq 1}$ of a distribution $P$ having finite polynomial moments at any order. On another hand, we lead an interesting study for a particular class of distributions, the radial density probability distributions, showing that the corresponding $L^r$-greedy quantization sequences are $L^s$-rate optimal for $s\in (r,d+r)$ under some moment assumption on $P$ and we investigate a particular case, the Hyper-Cauchy distribution, where the distribution $P$ has finite polynomial moments up to a finite order. As for the $L^r$-dilated/contracted optimal quantizers, two results are already given in \cite{Sagna08}: one showing that an asymptotically $L^r$-optimal sequence of quantizers is $L^s$-rate optimal and another restricted to a sequence of (exactly) $L^r$-optimal quantizers and showing that it is $L^s$-rate optimal for $s \in (0,+\infty)$. In this paper, we change the approach and use auxiliary probability distributions satisfying a control criterion on balls to extend these results to a larger class of distributions for $L^r$-optimal quantizers. At this stage, one wonders if the $L^r$-dilated sequence satisfy the so-called $L^s$-empirical measure theorem or if there exists a particular set of parameters $(\theta^*,\mu^*)$ for which it is satisfied, leading to wonder whether the sequence is $L^s$-asymptotically optimal. This prompts us to consider several particular probability distributions and establish this study for each distribution. Finally, the application of this study to numerical integration, introduced in \cite{Sagna08}, is detailed and illustrated, by numerical examples, for optimal and greedy quantization.\\

This paper will be organized as follows: We start, in Section $\ref{tools}$, with some results and tools, mostly from $\cite{papiergreedy}$, that will be useful in the whole paper. In Section $\ref{greedydilat}$, we give upper bounds for dilated/contracted sequences of $L^r$-greedy quantization sequences of a distribution $P$ having finite polynomial moments at any order, investigate an example of a not so general case and lead a specific study for greedy quantization sequences of radial density distributions. Such error bounds are given for optimal quantizers in Section $\ref{optimaldilat}$. In Section $\ref{examplesdilat}$, we present several studies concerning the convergence of the empirical measure and the $L^s$-asymptotic optimality of the $L^r$-dilated/contracted sequence of particular probability distributions. Finally, Section \ref{application} is devoted to an application to numerical integration.
\section{Main tools}
\label{tools}
In this section, we present some useful results and inequalities which constitute essential tools needed to achieve desired results in the rest of the paper. Let $X$ be an $\R^d$-valued random variable with distribution $P$ such that $\E|X|^r<+\infty$ for $r>0$ and a norm $|\cdot|$ on $\R^d$. Let $(\Gamma^n)_{n\geq 0}$ be a sequence of $L^r$-optimal quantizers of $X$ and $(a_n)_{n \geq 0}$ be a corresponding greedy quantization sequence. We start by giving the result concerning the rate of convergence to $0$ of a sequence of $L^r$-optimal quantizers. The first part of the following theorem is an asymptotic result and the second part is universal non-asymptotic.
\begin{thm} 
		\label{Zadoretpierce}
		\noindent $(a)$ {\rm Zador's Theorem (see \cite{Zador82})} :  
		Let $X \in L_{\mathbb{R}^d}^{r+\eta}(\P)$, $\eta>0$, 
		with distribution $P$ such that 
		$dP(\xi)= \varphi(\xi) d \lambda_d(\xi)+ d\nu(\xi)$. 
		Then, 
		\begin{equation}
		\label{Zadorgeneral}
		\ds\lim_{n\rightarrow +\infty} n^{\frac{1}{d}} e_{r,n}(X) = Q_r(P)= \widetilde{J}_{r,d} \|\varphi\|^{\frac{1}{r}}_{L^{\frac{r}{r+d}}(\lambda_d)}
		\end{equation}
		where $\widetilde{J}_{r,d} = \ds \inf_{n \geq 1} n^{\frac{1}{d}} e_{r,n}(U([0,1]^d)) \in (0,+ \infty)$.  \\
		\noindent $(b)$ {\rm Extended Pierce's Lemma (see \cite{LuPa08,Pages18})}:
		Let $r,\eta >0 $. There exists a constant $\kappa_{d,r,\eta} \in (0,+\infty)$  such that, for any random vector $X: (\Omega, \mathcal{A}, \P)\rightarrow \R^d$,
		\begin{equation}
		\label{Piercegeneral}
		\forall n\geq 1,\quad e_{r,n}(X) \leq \kappa_{d,r,\eta} \sigma_{r+\eta} (X) n^{- \frac{1}{d}}
		\end{equation}
		where, for every $r \in (0, + \infty), \, \sigma_r(X) = \ds \inf_{a \in \mathbb{R}^d} \|X-a\|_r\leq +\infty$.
\end{thm}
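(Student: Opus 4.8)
\medskip
\noindent\textbf{Sketch of the argument.}
The two assertions are of different nature --- $(b)$ is a soft universal bound, $(a)$ pins down an exact constant --- so I would prove them separately and use $(b)$ inside the proof of $(a)$. For $(b)$, the plan is a \emph{random quantizer} argument. One may assume $\sigma_{r+\eta}(X)<+\infty$, the bound being trivial otherwise. Using the affine invariances $e_{r,n}(\lambda X+b)=\lambda\,e_{r,n}(X)$ and $\sigma_{r+\eta}(\lambda X+b)=\lambda\,\sigma_{r+\eta}(X)$ for $\lambda>0,\ b\in\R^d$, and subtracting an $L^{r+\eta}$-median $a^{*}\in\arg\min_{a}\|X-a\|_{r+\eta}$ (which exists, $a\mapsto\|X-a\|_{r+\eta}$ being continuous and coercive), I reduce to $\E|X|^{r+\eta}=1$, where it suffices to exhibit $\kappa$ with $e_{r,n}(X)\le\kappa\,n^{-1/d}$. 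Fix $\beta\in(d,\,d+d\eta/r)$, let $h(\xi)=c_{d,\beta}(1+|\xi|)^{-\beta}$ define a probability measure $\mu=h\cdot\lambda_d$, and let $Y_1,\dots,Y_n$ be i.i.d.\ with law $\mu$, independent of $X$. Since $\{Y_1,\dots,Y_n\}$ is an admissible $n$-grid, $e_{r,n}(X)^r\le\E[\min_{i\le n}|X-Y_i|^r]=\int_{\R^d}\E[\min_{i\le n}|\xi-Y_i|^r]\,dP(\xi)$. For fixed $\xi$ the inner expectation equals $\int_0^{\infty}(1-\mu(B(\xi,t^{1/r})))^n\,dt$; using $\mu(B(\xi,\rho))\ge c'(1+|\xi|)^{-\beta}\rho^d$ for $\rho\le 1+|\xi|$ and the crude tail bound $\mu(B(\xi,\rho))\ge 1-C\rho^{-(\beta-d)}$ for $\rho\ge 2|\xi|$, and splitting the integral at the scale $\rho_0$ solving $n\,c'(1+|\xi|)^{-\beta}\rho_0^d=1$, one gets $\E[\min_{i\le n}|\xi-Y_i|^r]\le C(1+|\xi|)^{\beta r/d}\,n^{-r/d}$ for $n$ beyond a threshold (small $n$ are absorbed into $\kappa$ via $e_{r,n}(X)\le\sigma_r(X)\le\sigma_{r+\eta}(X)$). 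Integrating and using $\beta r/d<r+\eta$ together with the normalization gives $e_{r,n}(X)^r\le\kappa^r n^{-r/d}$, and undoing the rescaling yields $(\ref{Piercegeneral})$.

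For $(a)$ I would run the classical four-stage scheme. \emph{Stage (i), the cube.} Existence of $\widetilde{J}_{r,d}=\lim_n n^{1/d}e_{r,n}(U([0,1]^d))$ follows from subadditivity: tiling $[0,1]^d$ by $m^d$ sub-cubes of side $1/m$ and copying an optimal $k$-grid into each shows $e_{r,m^d k}(U)\le m^{-1}e_{r,k}(U)$, hence $(m^dk)^{1/d}e_{r,m^dk}(U)\le k^{1/d}e_{r,k}(U)$; combined with the monotonicity of $n\mapsto e_{r,n}$ and an interpolation of the intermediate $n$ this gives $\limsup_n n^{1/d}e_{r,n}(U)\le\inf_k k^{1/d}e_{r,k}(U)$, the reverse being trivial; finiteness of $\widetilde{J}_{r,d}$ comes from any explicit product grid, positivity from a packing bound. \emph{Stage (ii), compactly supported $P=\varphi\cdot\lambda_d$.} Partition $\mathrm{supp}(P)$ into cubes $C_j$ of side $\delta$, replace $\varphi$ on $C_j$ by a constant $\varphi_j$ (legitimate after approximating $\varphi$ by a continuous density, or at Lebesgue points), and allocate $n_j\approx n\,\varphi_j^{d/(d+r)}/\sum_\ell\varphi_\ell^{d/(d+r)}$ points to $C_j$ through a rescaled optimal uniform grid; a Riemann-sum computation gives $\limsup_n n^{r/d}e_{r,n}(P)^r\le\widetilde{J}_{r,d}^{\,r}\big(\int\varphi^{d/(d+r)}\,d\lambda_d\big)^{(d+r)/d}$, which is the asserted constant.

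\emph{Stage (iii), general $P=\varphi\cdot\lambda_d$ with $X\in L^{r+\eta}$.} Condition $P$ on a ball $B_K$, quantize $P_{|B_K}$ with $n-m_n$ points via Stage (ii) and the residual tail mass with $m_n$ points via part $(b)$, then let $K,m_n,n\to+\infty$ at compatible rates; here one uses that $X\in L^{r+\eta}$ forces $\varphi\in L^{d/(d+r)}(\lambda_d)$ and dominated convergence to remove the truncation in the limiting constant. \emph{Stage (iv), the singular part.} For the lower bound one discards the nonnegative $d\nu$-contribution in $\int\min_i|\xi-x_i|^r\,dP$; for the upper bound one adds a sublinear number $o(n)$ of extra points on a fine auxiliary grid covering a small neighbourhood of the $\lambda_d$-null set carrying $\nu$, contributing $o(n^{-1/d})$ to the error, so that $\nu$ does not affect the limit $(\ref{Zadorgeneral})$.

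The genuine obstacle is the \emph{lower bound} in Stage (ii). For an arbitrary grid $\Gamma_n$ the sub-cubes are coupled, since points of $\Gamma_n$ outside $C_j$ can still quantize points inside it; the standard remedy is, for each $j$, to keep only the points of $\Gamma_n$ lying in a slightly enlarged cube $\widehat{C}_j$, to cap distances at the collar width $\sim\delta$, and thus to bound $\int_{C_j}\mathrm{dist}(\xi,\Gamma_n)^r\varphi\,d\lambda_d$ from below by a constant multiple of $\big(\inf_{C_j}\varphi\big)\lambda_d(C_j)\,\delta^r\,e_{r,k_j}(U([0,1]^d))^r$ with $k_j=\mathrm{card}(\Gamma_n\cap\widehat{C}_j)$, and then to minimise $\sum_j a_j k_j^{-r/d}$ subject to $\sum_j k_j\le n$, which by Hölder reproduces exactly the Riemann sum of Stage (ii); letting $\delta\to0$ after $n\to+\infty$ matches the upper bound. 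With Stages (i)--(iv) and this lower bound in place, $(\ref{Zadorgeneral})$ follows, completing the proof of both parts.
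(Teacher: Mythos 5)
This theorem is quoted in the paper as a known result, with no proof given — the paper simply refers to \cite{Zador82}, \cite{LuPa08} and \cite{Pages18} (the full arguments are in \cite{GraLu00} and \cite{Pages18}) — so there is no in-paper proof to compare against. Your sketch is, in substance, the standard proof from those references: the random-quantizer (Pareto-tail density) argument with the exponent constraint $\beta r/d<r+\eta$ for the extended Pierce lemma, and for Zador the self-similarity/subadditivity step on the unit cube, the piecewise-constant-density allocation with the enlarged-cube/truncated-distance device plus H\"older for the lower bound, truncation combined with $(b)$ for unbounded support, and the observation that the singular part does not affect the constant; this is correct in outline. One small wording slip: allocating $o(n)$ points to a neighbourhood of the null set carrying $\nu$ of Lebesgue measure $\varepsilon$ yields an added contribution of order $c_\varepsilon\, n^{-1/d}$ with $c_\varepsilon\to 0$ as $\varepsilon\to 0$, not $o(n^{-1/d})$ for fixed $\varepsilon$; the conclusion survives by letting $\varepsilon\to 0$ after $n\to+\infty$.
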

\noindent Note that a sequence of $n$-quantizers $(\Gamma^n)_{n \geq 1}$ is said to be {\em asymptotically $L^r$-optimal} if $$\lim_{n} n^{\frac 1d} e_r(\Gamma^n,X)=Q_r(P)$$ and {\em $L^r$-rate optimal} if  \begin{equation}
\label{defrateoptimal}
\limsup_{n \rightarrow +\infty} n^{\frac1d} e_r(\Gamma^n,X)<+\infty \quad \mbox{or equivalently} \quad  \forall n\geq 1, \quad e_r(\Gamma^n,X) \leq C_1 n^{-\frac1d} 
\end{equation}
where $C_1$ is a constant not depending on $n$.\\

The $L^r$-rate optimality of greedy quantization sequences has been recently extended in $\cite{papiergreedy}$. The authors relied on auxiliary probability distributions $\nu$ on $(\R^d, \mathcal{B}(\R^d))$ satisfying the following control on balls, with respect to an $L^r$-median $a_1$ of $P$: Assume there exists $\varepsilon_0\!\in (0, 1]$ such that for every $\varepsilon \in (0,\varepsilon_0)$, there exists a Borel function $g_{\varepsilon}: \R^d \rightarrow [0,+\infty)$ such that, for every $x \in \mbox{supp}(P)$ and every $t \in [0,\varepsilon |x-a_1|]$, 
\begin{equation}
\label{criterenuintro}
  \nu(B(x,t)) \geq g_{\varepsilon}(x) V_d t^d
\end{equation}
where $V_d$ denotes the volume of the hyper unit ball. 
Of course, this condition is of interest only if the set $\{g_\varepsilon >0\}$ is sufficiently large with respect to $\{f>0\}$ (where $f$ is the density of $P$). 
\begin{thm}
	\label{greedyrateoptimal}
	{\rm (see \cite{papiergreedy})}
	Let $P$ be such that $\int_{\R^d} |x|^r dP(x)<+\infty$. For any distribution $\nu$ and any Borel function $g_{\varepsilon} : \R^d\rightarrow \R_+$, $\varepsilon \in (0,\frac13)$, satisfying~$(\ref{criterenuintro})$,
	\begin{equation}
	\label{gepsilongeneral}
	\forall n \geq 2, \quad e_r(a^{(n)},P) \leq \varphi_r(\varepsilon)^{-\frac1d} V_d^{-\frac1d} \left(\frac rd \right)^{\frac1d}\left(\int g_{\varepsilon}^{-\frac rd} dP  \right)^{\frac1r} (n-1)^{-\frac1d}
	\end{equation}
	where $\ds \varphi_r(u)=\left(\frac{1}{3^r}-u^r\right)u^d$.
\end{thm}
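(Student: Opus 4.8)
The plan is to derive a single non‑asymptotic one‑step inequality for the quantities $u_n:=e_r(a^{(n)},P)^r=\int_{\R^d}\delta_n(x)^r\,dP(x)$, where $\delta_n(x):=\min_{1\le i\le n}|x-a_i|$, and then to convert it into the $n^{-1/d}$ rate via the elementary comparison lemma for sequences satisfying $u_{n+1}\le u_n-Ku_n^{1+d/r}$. For the one‑step inequality I would exploit the variational character of the greedy point: since $a_{n+1}$ realises the infimum in \eqref{dilat:greedydef}, for every $\xi\in\R^d$ one has $u_{n+1}\le\int_{\R^d}\min(\delta_n(x),|x-\xi|)^r\,dP(x)$; averaging this against the probability measure $\nu$ and applying Fubini (all integrands being nonnegative),
\begin{equation*}
u_{n+1}\le\int_{\R^d}\Big[\int_{\R^d}\min(\delta_n(x),|x-\xi|)^r\,d\nu(\xi)\Big]\,dP(x)=u_n-\int_{\R^d}G_n(x)\,dP(x),
\end{equation*}
where, using the identity $\min(\delta,s)^r=\delta^r-(\delta^r-s^r)^+$ and $\nu(\R^d)=1$, one sets $G_n(x):=\int_{\R^d}(\delta_n(x)^r-|x-\xi|^r)^+\,d\nu(\xi)$.

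Next I would bound the gain $G_n(x)$ from below using the control on balls. For $x\in\mbox{supp}(P)$, restrict the integral defining $G_n(x)$ to $B\big(x,\varepsilon\delta_n(x)\big)$, on which $\delta_n(x)^r-|x-\xi|^r\ge\delta_n(x)^r(1-\varepsilon^r)$. Since $a_1\in a^{(n)}$ we have $\delta_n(x)\le|x-a_1|$, so $\varepsilon\delta_n(x)$ lies in the admissible range $[0,\varepsilon|x-a_1|]$ and \eqref{criterenuintro} gives $\nu\big(B(x,\varepsilon\delta_n(x))\big)\ge g_\varepsilon(x)V_d(\varepsilon\delta_n(x))^d$. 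Hence
\begin{equation*}
G_n(x)\ \ge\ V_d\,\varepsilon^d(1-\varepsilon^r)\,g_\varepsilon(x)\,\delta_n(x)^{d+r}\ \ge\ V_d\,\varphi_r(\varepsilon)\,g_\varepsilon(x)\,\delta_n(x)^{d+r},
\end{equation*}
the second inequality holding because $\varphi_r(\varepsilon)=\big(\tfrac{1}{3^r}-\varepsilon^r\big)\varepsilon^d\le(1-\varepsilon^r)\varepsilon^d$, which is moreover $>0$ precisely under the standing assumption $\varepsilon<\tfrac13$.

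It then remains to feed this back and iterate. Hölder's inequality with conjugate exponents $\tfrac{d+r}{r}$ and $\tfrac{d+r}{d}$ gives $u_n\le\big(\int g_\varepsilon\delta_n^{d+r}\,dP\big)^{r/(d+r)}\big(\int g_\varepsilon^{-r/d}\,dP\big)^{d/(d+r)}$, whence $\int g_\varepsilon\delta_n^{d+r}\,dP\ge u_n^{(d+r)/r}\big(\int g_\varepsilon^{-r/d}\,dP\big)^{-d/r}$. Combining with the two displays above yields $u_{n+1}\le u_n-K\,u_n^{1+d/r}$ with $K=V_d\,\varphi_r(\varepsilon)\big(\int g_\varepsilon^{-r/d}\,dP\big)^{-d/r}$. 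From $u_{n+1}\le u_n\big(1-Ku_n^{d/r}\big)$, raising to the power $-d/r$ and using $(1-t)^{-d/r}\ge1+\tfrac dr t$, one gets $u_{n+1}^{-d/r}\ge u_n^{-d/r}+\tfrac dr K$, hence $u_n^{-d/r}\ge u_1^{-d/r}+\tfrac dr K(n-1)\ge\tfrac dr K(n-1)$ for $n\ge2$, i.e. $u_n\le\big(\tfrac dr K(n-1)\big)^{-r/d}$; taking $r$‑th roots produces exactly the announced estimate. (The degenerate cases where some $u_k$ vanishes — then $P$ is finitely supported and the bound is trivial — or where $\int g_\varepsilon^{-r/d}\,dP=+\infty$ — then the bound is vacuous — need no argument.)

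The main obstacle is the coupling carried out in the first two steps, not the subsequent bookkeeping (Hölder and the comparison lemma, the latter already used in \cite{LuPa15,papiergreedy}, are routine). The averaging device is what makes the greedy minimisation usable: it exchanges the intractable infimum over the new point $\xi$ for an integral against $\nu$, so that the reduction of the error becomes $G_n(x)$, and \eqref{criterenuintro} is exactly calibrated so that $G_n(x)\gtrsim g_\varepsilon(x)\delta_n(x)^{d+r}$ with $g_\varepsilon^{-r/d}$ being $P$‑integrable — which is precisely what Hölder needs in order to turn $\int G_n\,dP$ into a power of $u_n$ and close the recursion. The one genuinely structural ingredient is the inequality $\delta_n(x)\le|x-a_1|$ — equivalently, that the $L^r$‑median $a_1$ of $P$ belongs to every grid $a^{(n)}$ — since this is what keeps the test radius $\varepsilon\delta_n(x)$ inside the range on which the control \eqref{criterenuintro} is available.
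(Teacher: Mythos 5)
Your proposal is correct, and its overall skeleton (one-step gain $\gtrsim V_d\,\varphi_r(\varepsilon)\int g_\varepsilon\,d(\cdot,a^{(n)})^{d+r}dP$, then H\"older with exponents $\tfrac{d+r}{r},\tfrac{d+r}{d}$, then the comparison lemma $u_{n+1}\le u_n-Ku_n^{1+d/r}\Rightarrow u_n\le(\tfrac dr K(n-1))^{-r/d}$) is the same as the one behind the cited proof. Where you genuinely deviate is in how the one-step (micro--macro) inequality is obtained: the paper's route goes through Proposition \ref{incopt}, i.e.\ a pointwise estimate on the Vorono\"i cell of the \emph{added} point $y$ giving $((1-c)^r-c^r)\,P\big(B(y,c\,d(y,\Gamma))\big)d(y,\Gamma)^r$, followed by integration in $y$ against $\nu$ and a swap of centers (the $F_1,F_2$ sets) that produces the constant $\frac{(1-c)^r-c^r}{(1+c)^r}$ and the radius $\frac{c}{c+1}d(x,\Gamma)$, with $c$ then tuned so that $\frac{c}{c+1}=\varepsilon$, which is precisely where $\varphi_r(\varepsilon)$ comes from. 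You instead average the greedy optimality inequality $e_r(a^{(n+1)},P)^r\le e_r(a^{(n)}\cup\{\xi\},P)^r$ directly against $\xi\sim\nu$ and lower-bound the gain $G_n(x)=\int(\delta_n(x)^r-|x-\xi|^r)^+d\nu(\xi)$ on the ball $B(x,\varepsilon\delta_n(x))$; this bypasses the center-swapping argument entirely, uses $a_1\in a^{(n)}$ at exactly the same place (to keep the radius within the range of \eqref{criterenuintro}), and in fact yields the slightly sharper constant $(1-\varepsilon^r)\varepsilon^d$, which you then legitimately weaken to $\varphi_r(\varepsilon)=(3^{-r}-\varepsilon^r)\varepsilon^d$ to match the statement. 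The rest (the H\"older step, the handling of $u_k=0$ or $\int g_\varepsilon^{-r/d}dP=+\infty$, and the discrete Gronwall step via $(1-t)^{-d/r}\ge 1+\tfrac dr t$) is carried out correctly and coincides with the standard bookkeeping of \cite{LuPa15,papiergreedy}; the trade-off is that your averaging derivation is shorter and self-contained, while the paper's Proposition \ref{incopt} is stated for an arbitrary quantizer $\Gamma$ and is reused verbatim for the optimal-quantizer case \eqref{micromacrooptimal}, which your $\xi$-averaging of the greedy defining property does not immediately give.
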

\noindent Considering appropriate auxiliary distributions $\nu$ and ``companion'' functions $g_{\varepsilon}$ satisfying $(\ref{criterenuintro})$ yields a Pierce type and a hybrid Zador-Pierce type $L^r$-rate optimality results as established in $\cite{papiergreedy}$  (Zador type results are established in \cite{LuPa15}).\\

Now, we give a micro-macro inequality established in $\cite{mismatch08}$ (see proof of Theorem $2$) to estimate the increments $e_r(\Gamma^n,P)^r - e_r(\Gamma^{n+1},P)^r $, where $(\Gamma^n)_{n\geq 1}$ is a sequence of $L^r$-optimal quantizers of $P$. For every $n \geq 1$,
	\begin{equation}
	\label{increment}
	e_r(\Gamma^n,P)^r - e_r(\Gamma^{n+1},P)^r  \leq \frac{4(2^r-1)e_r(\Gamma^{n+1},P)^r}{n+1}+\frac{4.2^rC_2^r n^{-\frac rd}}{n+1}
	\end{equation}
	where $C_2$ is a finite constant independent of $n$. \\
		
The following Proposition provides a micro-macro inequality established in \cite{papiergreedy} for any quantizer $\Gamma$ of $X$ with distribution $P$.
\begin{prop}
	\label{incopt}
	Assume $\int |x|^rdP(x)<+\infty$. Let $y \in \R^d$ and $\Gamma \subset \R^d$ be a finite quantizer of a random variable $X$ with distribution $P$ such that ${\rm card}(\Gamma)\geq 1$. Then, for every probability distribution $\nu$ on $(\R^d,\mathcal{B}(\R^d))$, every $c \in (0, \tfrac 12)$ 
	\begin{align*}
	e_r(\Gamma,P)^r - e_r(\Gamma \cup \{y\},P)^r 
	\geq \frac{(1-c)^r-c^r}{(c+1)^r} \int \nu \left(B\Big(x, \frac{c}{c+1}d\left(x,\Gamma\right)  \Big) \right) d\left(x,\Gamma\right)^r dP(x). 
	\end{align*}
\end{prop}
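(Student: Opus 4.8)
The plan is to reduce everything to the \emph{exact} formula for the quantization--error gain produced by inserting a single point, and then integrate that formula against $\nu$. Since $\Gamma$ is finite and nonempty, $d(x,\Gamma)<+\infty$ for every $x$, and because $t\mapsto t^r$ is increasing on $[0,+\infty)$ one has, for every $y\in\R^d$,
\begin{equation*}
d(x,\Gamma\cup\{y\})^r=\min\big(d(x,\Gamma),|x-y|\big)^r=d(x,\Gamma)^r-\big(d(x,\Gamma)^r-|x-y|^r\big)_+ .
\end{equation*}
Integrating in $x$ against $P$ is legitimate because $\int|x|^r\,dP<+\infty$ and ${\rm card}(\Gamma)\ge1$ force $d(\cdot,\Gamma)^r\in L^1(P)$ (pick $x_0\in\Gamma$: $d(x,\Gamma)^r\le|x-x_0|^r\le c_r(|x|^r+|x_0|^r)$), so that
\begin{equation*}
e_r(\Gamma,P)^r-e_r(\Gamma\cup\{y\},P)^r=\int_{\R^d}\big(d(x,\Gamma)^r-|x-y|^r\big)_+\,dP(x).
\end{equation*}
This already records that the left--hand side is nonnegative; the point is to quantify it. I would then integrate this identity against $\nu(dy)$ (equivalently, note that this $\nu$--average is a lower bound for the gain at the $L^r$--optimal/greedy choice of the added point, since that choice minimizes $e_r(\Gamma\cup\{\cdot\},P)$), and apply Tonelli's theorem — licit since the integrand is nonnegative and jointly measurable — to get
\begin{equation*}
\int_{\R^d}\Big(e_r(\Gamma,P)^r-e_r(\Gamma\cup\{y\},P)^r\Big)\,\nu(dy)=\int_{\R^d}\bigg[\int_{\R^d}\big(d(x,\Gamma)^r-|x-y|^r\big)_+\,\nu(dy)\bigg]\,dP(x).
\end{equation*}

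Next I would bound the inner integral from below, for each fixed $x$, by discarding all of $y$--space except the ball $B\big(x,\tfrac{c}{c+1}d(x,\Gamma)\big)$. On that ball $|x-y|\le\tfrac{c}{c+1}d(x,\Gamma)\le d(x,\Gamma)$, so the integrand is at least $d(x,\Gamma)^r-\big(\tfrac{c}{c+1}\big)^rd(x,\Gamma)^r$, whence
\begin{equation*}
\int_{\R^d}\big(d(x,\Gamma)^r-|x-y|^r\big)_+\,\nu(dy)\ \ge\ \Big(1-\big(\tfrac{c}{c+1}\big)^r\Big)\,d(x,\Gamma)^r\,\nu\!\Big(B\big(x,\tfrac{c}{c+1}d(x,\Gamma)\big)\Big).
\end{equation*}
Finally I would invoke the elementary inequality $1-\big(\tfrac{c}{c+1}\big)^r=\dfrac{(c+1)^r-c^r}{(c+1)^r}\ \ge\ \dfrac{(1-c)^r-c^r}{(c+1)^r}$, valid because $c+1\ge1-c\ge0$ and $r>0$ (the restriction $c\in(0,\tfrac12)$ serving only to make $(1-c)^r-c^r$ positive, the bound being trivial otherwise since the left--hand side is $\ge0$). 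Substituting and integrating over $x$ yields exactly the claimed bound — in fact with the slightly sharper constant $\frac{(c+1)^r-c^r}{(c+1)^r}$.

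The argument is short, so there is no single hard step; the only genuine idea is the localization to $B\big(x,\tfrac{c}{c+1}d(x,\Gamma)\big)$ (and the choice of that radius), the rest being bookkeeping. The main things to get right are: (i) the one--point gain identity must be written for the $r$-th power of the error, $e_r(\cdot,P)^r=\int d(\cdot,\Gamma)^r\,dP$, so that no triangle inequality enters and the whole proof covers the quasi-norm range $r\in(0,1)$ as well; (ii) the integrability of $d(\cdot,\Gamma)^r$ under $P$, which is where $\int|x|^r\,dP<+\infty$ and ${\rm card}(\Gamma)\ge1$ are used; and (iii) the Tonelli swap, which is harmless because every integrand in sight is nonnegative. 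If one insists on keeping a literally fixed $y$ on the left, the same chain — now restricting the $P$--integral to $\{x:\ |x-y|\le\tfrac{c}{c+1}d(x,\Gamma)\}$ and observing that $\mathds{1}_{\{|x-y|\le\frac{c}{c+1}d(x,\Gamma)\}}=\delta_y\big(B(x,\tfrac{c}{c+1}d(x,\Gamma))\big)$ — proves the statement for $\nu=\delta_y$, and the version with a general $\nu$ (and with the $L^r$--optimal added point $y$) follows by averaging in $y\sim\nu$.
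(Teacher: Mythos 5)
Your proof is correct, and it reaches the stated bound by a more direct route than the paper, with a slightly sharper constant. The paper first proves a pointwise micro--macro inequality at the added point $y$: using the inclusion $B\big(y,c\,d(y,\Gamma)\big)\subset W_y(\Gamma\cup\{y\})$ and the estimate $d(x,\Gamma)\ge(1-c)\,d(y,\Gamma)$ on that ball, it gets $e_r(\Gamma,P)^r-e_r(\Gamma\cup\{y\},P)^r\ge\big((1-c)^r-c^r\big)\,P\big(B(y,c\,d(y,\Gamma))\big)\,d(y,\Gamma)^r$; it then integrates in $y\sim\nu$ and performs a Fubini swap based on the inclusion $\big\{|x-y|\le\tfrac{c}{c+1}d(x,\Gamma)\big\}\subset\big\{|x-y|\le c\,d(y,\Gamma),\ d(y,\Gamma)\ge\tfrac{1}{c+1}d(x,\Gamma)\big\}$, converting $P$-masses of balls centred at $y$ into $\nu$-masses of balls centred at $x$ at the cost of the factor $(c+1)^{-r}$. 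You bypass the Vorono\"i-cell step entirely: the exact identity $e_r(\Gamma,P)^r-e_r(\Gamma\cup\{y\},P)^r=\int\big(d(x,\Gamma)^r-|x-y|^r\big)_+\,dP(x)$, Tonelli, and localization of the $y$-integral to $B\big(x,\tfrac{c}{c+1}d(x,\Gamma)\big)$ give the bound in one stroke, with constant $\tfrac{(c+1)^r-c^r}{(c+1)^r}\ge\tfrac{(1-c)^r-c^r}{(c+1)^r}$, so the $(1-c)^r$ term never enters; this also makes transparent that only the monotonicity of $t\mapsto t^r$ is used, covering $r\in(0,1)$. One caveat, which you spotted and which is present in the paper's own statement and proof as well: as literally written (fixed $y$ on the left, arbitrary $\nu$ on the right) the inequality cannot hold for every $y$ --- take $y\in\Gamma$ --- and what is actually proved, and used later in \eqref{micromacrooptimal}--\eqref{micromacrogreedy}, is the $\nu$-averaged version (equivalently the bound at a point $y$ whose error does not exceed the $\nu$-average of $e_r(\Gamma\cup\{\cdot\},P)^r$, e.g.\ the optimal or greedy choice); your explicit handling of this, together with the $\nu=\delta_y$ case, is exactly the right reading, so it is not a gap.
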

\medskip
\noindent From this Proposition, one concludes the following either  for $L^r$-optimal quantizers or for greedy sequences:  \smallskip \\
$\rhd$ Since any sequence of $L^r$-optimal quantizers $(\Gamma^n)_{n \geq 1}$ clearly satisfies $e_r(\Gamma^{n+1},P)\leq e_r(\Gamma^n \cup \{y\}, P)$ for every $y\in \R^d$, then
\begin{align}
\label{micromacrooptimal}
	e_r(\Gamma^n,P)^r - e_r(\Gamma^{n+1},P)^r \geq &e_r(\Gamma^n,P)^r - e_r(\Gamma^n \cup \{y\},P)^r \nonumber \\
	\geq & \frac{(1-c)^r-c^r}{(c+1)^r} \int \nu \left(B\left(x, \frac{c}{c+1}d\left(x,\Gamma^n\right)  \right) \right) d\left(x,\Gamma^n\right)^r dP(x). 
	\end{align}
	$\rhd$ Likewise, since the greedy quantization sequence $(a_n)_{n \geq 1}$ satisfies $e_r(a^{(n+1)},P) \leq e_r(a^{(n)}\cup \{y\},P)$ for every $y \in \R^d$, then 
	\begin{align}
 	\label{micromacrogreedy}
 	e_r(a^{(n)},P)^r - e_r(a^{(n+1)},P)^r 
 	\geq \frac{(1-c)^r-c^r}{(c+1)^r} \int \nu \left(B\left(x, \frac{c}{c+1}d\left(x,a^{(n)}\right)  \right) \right) d\left(x,a^{(n)}\right)^r dP(x). 
 	\end{align}
 	\section{Upper estimates for greedy quantizers}
 	\label{greedydilat}
 	This is the main part of this paper. Let $r,s>0$ and let $(a_n)_{n \geq 1}$ be an $L^r(\R^d)$-optimal greedy quantization sequence of a random variable $X$ with probability distribution $P$. We denote $a^{(n)}=\{a_1,\ldots,a_n\}$ the first $n$ terms of this sequence. For every $\mu \in \R^d$ and $\theta>0$, we denote $a^{(n)}_{\theta,\mu}=\mu+\theta(a^{(n)}-\mu)=\{\mu+\theta(a_i-\mu), \; 1\leq i \leq n\}$. In this section, we study the $L^s$-optimality of the sequence $a^{(n)}_{\theta,\mu}$. \\

 For this, we consider auxiliary probability distributions $\nu$ satisfying the following control on balls with respect to an $L^r$-median $a_1$ of $P$: for every $\varepsilon \in (0,1)$, there exists a Borel function $g_{\varepsilon}: \R^d \rightarrow (0,+\infty)$ such that, for every $x \in \mbox{supp}(\P)$ and every $t \in [0,\varepsilon |x-a_1|]$, 
\begin{equation}
\label{criterenu}
\nu(B(x,t)) \geq g_{\varepsilon}(x) V_d t^d.
\end{equation}
Note that $a_1 \in a^{(n)}$ for every $n \geq 1$ by construction of the greedy quantization sequence so that $d(x,a^{(n)}) \leq d(x,a_1)$ for every $x \in \R^d$.
 	\subsection{Main results}
 	The following result is an avatar of Pierce's Lemma for the $L^s$-error $e_s(a^{(n)}_{\theta,\mu},P) $.
 	\begin{thm}
	\label{Pierce1}
Let $s\in [r,d+r)$ and $1-q=\frac{d+r}{d+r-s}$. Let $(a_n)_{n \geq 1}$ be an $L^r(\R^d)$-optimal greedy quantization sequence of an $\R^d$-valued random variable $X$ with distribution $P=f.\lambda_d$ such that $\E|X|^{r+\delta}<+\infty$ for some $\delta>0$ such that $r+\delta>\frac{sd}{d+r-s}$. Let $\eta \in \big(0,r+\delta-\frac{sd}{d+r-s}\big)$ and let $p'=\frac{r+\delta-\eta}{d|q|}$, $q'=\frac{r+\delta-\eta}{r+\delta-\eta-d|q|}>1$ be two conjugate coefficients larger than $1$. Assume 
 \begin{equation}
 \label{condslarge}
 \int_{\{f>0\}}\left( \frac{f_{\theta,\mu}}{f}\right)^{(1-q)q'}fd\lambda_d <+\infty.
 \end{equation} Then, for every $n \geq 3$,
 \begin{equation}
 e_s(a^{(n)}_{\theta,\mu},P) \leq \theta^{1+\frac{d}{s}} \kappa_{\theta,\mu}^{\text{Greedy,Pierce}} \left(\int_{\{f>0\}}\left( \frac{f_{\theta,\mu}}{f}\right)^{(1-q)q'}fd\lambda_d \right)^{\frac{1}{q'|q|(d+r)}} \sigma_{r+\delta}(P)(n-2)^{-\frac1d}. 
 \end{equation}
where  $e_{r+\delta}(a^{(1)},P)=\sigma_{r+\delta}(P) < +\infty$ denotes the $L^{r+\delta}$-standard deviation of $P$ and
  \begin{align*}
 \kappa_{\theta,\mu}^{\text{Greedy,Pierce}}=& \;2^{\frac1d+\frac{r+\delta}{r+d}(1+\frac{1}{|q|p'}) }  V_d^{-\frac1d} \Big(\frac rd \Big)^{\frac{r}{d(d+r)}}\min_{\varepsilon\in (0,\frac13)}\Big[(1+\ve)\varphi_r(\varepsilon)^{-\frac{1}{d}}\Big] \left(\int (1 \vee |x|)^{\frac{r+\delta}{r+\delta-\eta}} dx \right)^{\frac1d}. 
 \end{align*}
\end{thm}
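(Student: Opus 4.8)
The plan is, in four steps: first reduce, by an affine change of variables, the bound on $e_s(a^{(n)}_{\theta,\mu},P)$ to an upper estimate for an integral of $d(\cdot,a^{(n)})^{s}$ against the \emph{original} density $f$; then split that integral, via two H\"older inequalities, into a power of $A_n:=\int g_\varepsilon\,d(\cdot,a^{(n)})^{d+r}\,dP$ (for an auxiliary distribution $\nu$ with ball-control function $g_\varepsilon$ as in (\ref{criterenu})) times powers of $\int g_\varepsilon^{-|q|p'}\,dP$ and $\int_{\{f>0\}}(f_{\theta,\mu}/f)^{(1-q)q'}f\,d\lambda_d$; next estimate $A_n$ with the \emph{sharp} rate by combining the greedy micro-macro inequality (\ref{micromacrogreedy}) with Theorem \ref{greedyrateoptimal}; and finally choose $\nu$ so that $\int g_\varepsilon^{-|q|p'}dP$ is governed by $\sigma_{r+\delta}(P)$, and gather the constants. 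For the change of variables: $a^{(n)}_{\theta,\mu}$ is the image of $a^{(n)}$ under $x\mapsto\mu+\theta(x-\mu)$, so $d(x,a^{(n)}_{\theta,\mu})=\theta\,d\big(\tfrac{x-\mu}{\theta}+\mu,a^{(n)}\big)$, and since $dP_{\theta,\mu}=\theta^{d}f_{\theta,\mu}\,d\lambda_d$ we obtain $e_s(a^{(n)}_{\theta,\mu},P)^{s}=\theta^{s}\!\int d(y,a^{(n)})^{s}dP_{\theta,\mu}=\theta^{s+d}\!\int d(y,a^{(n)})^{s}f_{\theta,\mu}\,d\lambda_d$, which accounts for the prefactor $\theta^{1+d/s}$. (Since $(f_{\theta,\mu}/f)^{(1-q)q'}f=f_{\theta,\mu}^{(1-q)q'}f^{1-(1-q)q'}$ with $(1-q)q'>1$, hypothesis (\ref{condslarge}) forces $\lambda_d(\{f=0\}\cap\{f_{\theta,\mu}>0\})=0$, so all integrals may be taken over $\{f>0\}$.)

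Fix $\nu$ and $g_\varepsilon$ as in (\ref{criterenu}). Writing, on $\{f>0\}$,
\[d(x,a^{(n)})^{s}f_{\theta,\mu}=\big(d(x,a^{(n)})^{d+r}g_\varepsilon\big)^{\!s/(d+r)}\cdot g_\varepsilon^{-s/(d+r)}\,\tfrac{f_{\theta,\mu}}{f}\cdot f,\]
then applying H\"older's inequality with the exponents $\tfrac{d+r}{s}$ and $\tfrac{d+r}{d+r-s}=1-q$ (using $\tfrac{s}{d+r-s}=|q|$), and applying H\"older again with the conjugate exponents $p',q'$ of the statement, yields
\[\int_{\{f>0\}}d(x,a^{(n)})^{s}f_{\theta,\mu}\,d\lambda_d\ \le\ A_n^{\,s/(d+r)}\Big(\int g_\varepsilon^{-|q|p'}dP\Big)^{\!\frac{1}{p'(1-q)}}\Big(\int_{\{f>0\}}\big(\tfrac{f_{\theta,\mu}}{f}\big)^{(1-q)q'}f\,d\lambda_d\Big)^{\!\frac{1}{q'(1-q)}},\]
the last factor being finite by (\ref{condslarge}). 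Raising to the power $1/s$ and using $s(1-q)=|q|(d+r)$ and $|q|p'=\tfrac{r+\delta-\eta}{d}$, the resulting exponents of $A_n$, of $\int g_\varepsilon^{-|q|p'}dP$ and of $\int(f_{\theta,\mu}/f)^{(1-q)q'}f\,d\lambda_d$ are exactly $\tfrac1{d+r}$, $\tfrac{d}{(r+\delta-\eta)(d+r)}$ and $\tfrac1{q'|q|(d+r)}$, as in the statement; it remains to estimate $A_n^{1/(d+r)}$ and to choose $\nu$ so that $\int g_\varepsilon^{-|q|p'}dP$ and $\int g_\varepsilon^{-r/d}dP$ are finite and controlled by $\sigma_{r+\delta}(P)$.

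The sharp estimate for $A_n$ is the delicate point. Take $c=\varepsilon\in(0,\tfrac13)$; then $\tfrac{c}{c+1}d(x,a^{(n)})\le\tfrac{\varepsilon}{1+\varepsilon}|x-a_1|\le\varepsilon|x-a_1|$ since $a_1\in a^{(n)}$, so inserting (\ref{criterenu}) into (\ref{micromacrogreedy}) gives, for every $m\ge1$,
\[e_r(a^{(m)},P)^{r}-e_r(a^{(m+1)},P)^{r}\ \ge\ \gamma_r(\varepsilon)\,A_m,\qquad\gamma_r(\varepsilon):=V_d\,\frac{(1-\varepsilon)^{r}-\varepsilon^{r}}{(1+\varepsilon)^{r}}\Big(\tfrac{\varepsilon}{1+\varepsilon}\Big)^{\!d}\ \ge\ \frac{V_d\,\varphi_r(\varepsilon)}{(1+\varepsilon)^{d+r}}\]
(the last inequality because $1-\varepsilon>\tfrac13$). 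Since $a^{(m)}\subset a^{(m+1)}$, the sequence $(A_m)$ is non-increasing, hence $A_n\le A_m$ for all $m\le n$; averaging the previous inequality over $m\in\{\lceil n/2\rceil,\dots,n\}$ and telescoping,
\[\big(n-\lceil n/2\rceil+1\big)A_n\ \le\ \gamma_r(\varepsilon)^{-1}\,e_r\big(a^{(\lceil n/2\rceil)},P\big)^{r}\ \le\ \gamma_r(\varepsilon)^{-1}C'(\varepsilon)^{r}\big(\lceil n/2\rceil-1\big)^{-r/d}\]
by Theorem \ref{greedyrateoptimal}, with $C'(\varepsilon)=\varphi_r(\varepsilon)^{-1/d}V_d^{-1/d}(r/d)^{1/d}\big(\int g_\varepsilon^{-r/d}dP\big)^{1/r}$. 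Using $n-\lceil n/2\rceil+1\ge n/2$ and $\lceil n/2\rceil-1\ge\tfrac{n-2}{2}$, the exponents recombine to give
\[A_n^{1/(d+r)}\ \le\ 2^{1/d}(1+\varepsilon)\,\varphi_r(\varepsilon)^{-1/d}\,V_d^{-1/d}(r/d)^{\frac{r}{d(d+r)}}\Big(\textstyle\int g_\varepsilon^{-r/d}dP\Big)^{\!\frac1{d+r}}(n-2)^{-1/d},\]
which produces the rate $(n-2)^{-1/d}$ and the factor $2^{1/d}(1+\varepsilon)\varphi_r(\varepsilon)^{-1/d}$ in $\kappa_{\theta,\mu}^{\text{Greedy,Pierce}}$. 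This is the main obstacle: the crude bound $A_n\le\gamma_r(\varepsilon)^{-1}e_r(a^{(n)},P)^{r}=\mathcal{O}(n^{-r/d})$ would only yield $e_s(a^{(n)}_{\theta,\mu},P)=\mathcal{O}(n^{-r/(d(d+r))})$, strictly weaker than $L^s$-rate optimality; one must exploit that every \emph{earlier} increment also dominates $A_n$ to reach the correct order $A_n=\mathcal{O}(n^{-(d+r)/d})$.

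Finally, choose $\nu$ with $\lambda_d$-density proportional to $(1\vee|x-a_1|)^{-\gamma_0}$, $\gamma_0:=\tfrac{d(r+\delta)}{r+\delta-\eta}>d$; bounding $1\vee|y-a_1|\le(1+\varepsilon)(1\vee|x-a_1|)$ on the ball $B(x,\varepsilon|x-a_1|)$ shows (\ref{criterenu}) holds with $g_\varepsilon(x)=c_\varepsilon(1\vee|x-a_1|)^{-\gamma_0}$ for an explicit $c_\varepsilon>0$, the $L^1$-normalization of that density contributing the Lebesgue normalization integral figuring in $\kappa_{\theta,\mu}^{\text{Greedy,Pierce}}$. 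Since $|q|p'=\tfrac{r+\delta-\eta}{d}$, one has $g_\varepsilon^{-|q|p'}(x)=c_\varepsilon^{-(r+\delta-\eta)/d}(1\vee|x-a_1|)^{r+\delta}$, so $\int g_\varepsilon^{-|q|p'}dP<+\infty$ by $\E|X|^{r+\delta}<+\infty$, and up to universal constants $\int g_\varepsilon^{-|q|p'}dP\le c_\varepsilon^{-(r+\delta-\eta)/d}\sigma_{r+\delta}(P)^{r+\delta}$; likewise $\int g_\varepsilon^{-r/d}dP<+\infty$, because $\gamma_0 r/d\le r+\delta$, i.e.\ $\eta\le\delta$, which holds since $\eta<r+\delta-\tfrac{sd}{d+r-s}\le\delta$ (as $s\ge r$), so $C'(\varepsilon)$ above is finite. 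Substituting these into the H\"older decomposition and the $A_n$-bound, gathering the numerical factors (the powers of $2$ from the doubling in the $A_n$-step and from $1\vee|x-a_1|\le2(1\vee|x|)(1\vee|a_1|)$, the factor $(r/d)^{r/(d(d+r))}$ from Theorem \ref{greedyrateoptimal}, etc.), noting that the two H\"older steps are admissible precisely when $p',q'>1$ --- equivalently $(r+\delta-\eta)(d+r-s)>ds$, which is the hypothesis on $\eta$ --- and minimizing over $\varepsilon\in(0,\tfrac13)$, one arrives at the announced inequality with the constant $\kappa_{\theta,\mu}^{\text{Greedy,Pierce}}$.
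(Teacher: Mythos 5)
Up to and including your estimate of $A_n$, the argument is sound and is essentially the paper's own route: the affine change of variables, the ball-control hypothesis (\ref{criterenu}) inserted into the micro-macro inequality (\ref{micromacrogreedy}), the H\"older splitting with exponents $\tfrac{d+r}{s}$, $1-q$ and then $p',q'$ (the paper runs the first of these as a \emph{reverse} H\"older inequality inside an intermediate statement, Theorem \ref{thmgreedygeneral1}, but it is the same computation), your choice of $\nu$ with density proportional to $(1\vee|x-a_1|)^{-d\frac{r+\delta}{r+\delta-\eta}}$, and the telescoping of increments over $m\in\{\lceil n/2\rceil,\dots,n\}$ using the monotonicity of $A_m$, which reproduces the paper's summation over $k\in\{n,\dots,2n-1\}$ together with Theorem \ref{greedyrateoptimal} and does give the correct order $A_n=\mathcal{O}(n^{-(d+r)/d})$; you correctly identify that this, rather than the crude bound $A_n\lesssim e_r(a^{(n)},P)^r$, is the crux.

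The genuine gap is the final step. Your claim that, up to universal constants, $\int g_\varepsilon^{-|q|p'}dP\le c_\varepsilon^{-(r+\delta-\eta)/d}\sigma_{r+\delta}(P)^{r+\delta}$ is false: since $g_\varepsilon^{-|q|p'}(x)\propto(1\vee|x-a_1|)^{r+\delta}\ge 1$, the left-hand side is bounded below by a positive constant while $\sigma_{r+\delta}(P)$ can be made arbitrarily small (take $P$ close to a Dirac mass). What the moment assumption actually yields, via Minkowski, is a bound of the form $\big(1+(1+\ve)\|X-a_1\|_{r+\delta}\big)^{r+\delta}$, which is not homogeneous in the dispersion of $P$; moreover, even granting your claim, the two moment factors enter with exponents $\tfrac{1}{p'|q|(d+r)}$ and $\tfrac{1}{d+r}$, so they would contribute $\sigma_{r+\delta}(P)$ raised to the power $\tfrac{r+\delta}{r+d}\big(1+\tfrac{1}{|q|p'}\big)\neq 1$, not the linear factor $\sigma_{r+\delta}(P)$ announced in the theorem. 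The paper closes exactly this gap with an equivariance (scaling) argument that your proof omits: apply the non-homogeneous bound to $X_\lambda=\lambda(X-a_1)+a_1$, whose greedy sequence is $\lambda(a_n-a_1)+a_1$ and whose quantization error is $\lambda$ times the original, and then choose $\lambda=\big((1+\ve)\sigma_{r+\delta}(P)\big)^{-1}$; this turns $\tfrac1\lambda\big(1+(1+\ve)\lambda\sigma_{r+\delta}(P)\big)^{\frac{r+\delta}{r+d}(1+\frac{1}{|q|p'})}$ into $2^{\frac{r+\delta}{r+d}(1+\frac{1}{|q|p'})}(1+\ve)\,\sigma_{r+\delta}(P)$, which is precisely where both the linear $\sigma_{r+\delta}(P)$ and the factor $2^{\frac{r+\delta}{r+d}(1+\frac{1}{|q|p'})}$ in $\kappa_{\theta,\mu}^{\text{Greedy,Pierce}}$ come from. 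A smaller slip: finiteness of (\ref{condslarge}), an integral already restricted to $\{f>0\}$, tells you nothing about $\{f=0\}$, so your parenthetical justification that all integrals may be taken over $\{f>0\}$ does not work; identifying $\int_{\{f>0\}}d(\cdot,a^{(n)})^{s}f_{\theta,\mu}\,d\lambda_d$ with the full $L^s$-error must either be assumed or handled separately.
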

 When $s \in (0,r]$, notice that
$$e_s(a_{\theta,\mu}^{(n)},P) \leq e_r(a_{\theta,\mu}^{(n)},P)$$
where $e_r(a_{\theta,\mu}^{(n)},P)$ is upper bounded as in Theorem $\ref{Pierce1}$. However, we are still interested in establishing a specific study for $s\in (0,r)$ and giving an upper bound for the $L^s$-error in the following theorem.
\begin{thm}
	\label{Pierce2}
	Let $s<r$ and $X$ be a random variable in $\R^d$ with distribution $P=f.\lambda_d$ such that $\E|X|^{r+\delta}<+\infty$ for some $\delta>0$. Assume 
	$$\int_{\{f>0\}} f^{-\frac{s}{r-s}}f_{\theta,\mu}^{\frac{r}{r-s}}d\lambda_d<+\infty.$$
	Then, for every $n \geq 3$,
	\begin{equation}
	e_s(a^{(n)}_{\theta,\mu},P) \leq  \widetilde{\kappa}_{\theta,\mu}^{\text{Greedy,Pierce}} \theta^{1+\frac ds} 	\left( \int_{\{f>0\}} f^{-\frac{s}{r-s}}f_{\theta,\mu}^{\frac{r}{r-s}}d\lambda_d \right)^{\frac{r-s}{sr}} \sigma_{r+\delta}(P)  (n-2)^{-\frac1d}  
	\end{equation}
	where $e_{r+\delta}(a^{(1)},P)=\sigma_{r+\delta}(P)<+\infty$ and
	\begin{align*}
	\widetilde{\kappa}_{\theta,\mu}^{\text{Greedy,Pierce}}=& \;2^{1+\frac1d+\frac{\delta}{r} }  V_d^{-\frac1d} \Big(\frac rd \Big)^{\frac{r}{d(d+r)}}\min_{\varepsilon\in (0,\frac13)}\Big[(1+\varepsilon)\varphi_r(\varepsilon)^{-\frac{1}{d}}\Big] \left(\int (1 \vee |x|)^{-d(1+\frac {\delta}{r})} dx \right)^{-\frac{1}{d}}. 
	\end{align*}
\end{thm}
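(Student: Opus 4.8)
The plan is to peel off the dilation by an affine change of variables, absorb the density mismatch by a single H\"older inequality, and then bound the remaining genuine $L^r$-error of the greedy grid by a Pierce-type estimate.

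First, I would record the dilation identity: for every $x\in\R^d$ one has $d\big(x,a^{(n)}_{\theta,\mu}\big)=\theta\, d\big(\tfrac{x-\mu}{\theta}+\mu,\,a^{(n)}\big)$, so that, performing the change of variable $y=\tfrac{x-\mu}{\theta}+\mu$ and using $dP_{\theta,\mu}=\theta^d f_{\theta,\mu}\,d\lambda_d$,
\[
e_s\big(a^{(n)}_{\theta,\mu},P\big)^s=\theta^s\,e_s\big(a^{(n)},P_{\theta,\mu}\big)^s=\theta^{s+d}\int_{\{f>0\}} d\big(x,a^{(n)}\big)^s f_{\theta,\mu}(x)\,d\lambda_d(x),
\]
the restriction to $\{f>0\}$ being licit under the (implicit) domination $f_{\theta,\mu}=0$ $\lambda_d$-a.e.\ on $\{f=0\}$, which is automatic when $f>0$ $\lambda_d$-a.e. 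Since $s<r$, I then apply H\"older's inequality on $\{f>0\}$ with the conjugate exponents $\tfrac rs$ and $\tfrac{r}{r-s}$ to the splitting $d(x,a^{(n)})^s f_{\theta,\mu}=\big(d(x,a^{(n)})^s f^{s/r}\big)\cdot\big(f^{-s/r}f_{\theta,\mu}\big)$, which gives
\[
\int_{\{f>0\}} d\big(x,a^{(n)}\big)^s f_{\theta,\mu}\,d\lambda_d\le e_r\big(a^{(n)},P\big)^s\Big(\int_{\{f>0\}}f^{-\frac{s}{r-s}}f_{\theta,\mu}^{\frac{r}{r-s}}\,d\lambda_d\Big)^{\frac{r-s}{r}},
\]
where I used $\int d(x,a^{(n)})^r f\,d\lambda_d=e_r(a^{(n)},P)^r$ for the first factor, the second factor being finite by hypothesis. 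Taking $s$-th roots, $e_s(a^{(n)}_{\theta,\mu},P)\le\theta^{1+d/s}\,e_r(a^{(n)},P)\,\big(\int_{\{f>0\}}f^{-s/(r-s)}f_{\theta,\mu}^{r/(r-s)}\,d\lambda_d\big)^{(r-s)/(sr)}$, which already produces the $\theta$-power and the mismatch factor of the claimed bound.

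It remains to bound $e_r(a^{(n)},P)$ by a Pierce-type estimate for the greedy sequence, i.e.\ to show $e_r(a^{(n)},P)\le\widetilde{\kappa}_{\theta,\mu}^{\text{Greedy,Pierce}}\,\sigma_{r+\delta}(P)\,(n-2)^{-1/d}$ for $n\ge3$. For this I would apply Theorem~\ref{greedyrateoptimal} to the auxiliary probability distribution $\nu$ whose density is proportional to $(1\vee|x-a_1|)^{-d(1+\delta/r)}$, which is well defined since $d(1+\delta/r)>d$. Bounding this density from below on $B(x,t)$ for $t\le\varepsilon|x-a_1|$ (noting $|y-a_1|\le(1+\varepsilon)|x-a_1|$ on that ball, and recalling $a_1\in a^{(n)}$ so $d(x,a^{(n)})\le|x-a_1|$) shows that \eqref{criterenu} holds with $g_\varepsilon(x)$ proportional to $(1+\varepsilon)^{-d(1+\delta/r)}(1\vee|x-a_1|)^{-d(1+\delta/r)}$; consequently $\int g_\varepsilon^{-r/d}\,dP$ is proportional to $\int(1\vee|x-a_1|)^{r+\delta}\,dP(x)<+\infty$ because $X\in L^{r+\delta}(\P)$. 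Using the homogeneity of $e_r$, $\sigma_{r+\delta}$ and $a_1$ to rescale so that this last integral is controlled by $\sigma_{r+\delta}(P)^{r+\delta}$ up to an explicit numerical factor, and then minimizing the resulting bound over $\varepsilon\in(0,\tfrac13)$, yields the displayed estimate with the stated constant (the shift from the $(n-1)^{-1/d}$ of Theorem~\ref{greedyrateoptimal} to $(n-2)^{-1/d}$ being a minor adjustment tied to the indexing convention $\sigma_{r+\delta}(P)=e_{r+\delta}(a^{(1)},P)$). Substituting into the bound of the previous paragraph completes the proof.

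The one genuinely delicate point is this last step: the careful verification of the control-on-balls inequality \eqref{criterenu} for the power-type law $\nu$ — in particular handling the translation by the $L^r$-median $a_1$ and the $1\vee(\cdot)$ truncation so that the plain moment assumption $\E|X|^{r+\delta}<+\infty$ turns into the quantitatively correct bound on $\int g_\varepsilon^{-r/d}\,dP$ in terms of $\sigma_{r+\delta}(P)$ — together with the ensuing bookkeeping of all multiplicative constants. Everything else (the affine reduction, the single H\"older inequality) is routine. Note finally that, by contrast with Theorem~\ref{Pierce1}, in the regime $s<r$ one needs only the crude $L^r$-information $e_r(a^{(n)},P)$ about the greedy grid — no $L^{r+d}$-type control extracted from the micro-macro inequalities \eqref{micromacrogreedy} is required — which is precisely why the integrability condition can here be relaxed to the much milder $\int_{\{f>0\}}f^{-s/(r-s)}f_{\theta,\mu}^{r/(r-s)}\,d\lambda_d<+\infty$.
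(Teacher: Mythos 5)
Your argument is correct in substance, but it follows a genuinely different and more elementary route than the paper. The paper deduces Theorem \ref{Pierce2} from the general Theorem \ref{thmgreedygeneral2}, which recycles the machinery of the case $s\geq r$: the micro-macro increment inequality $(\ref{micromacrogreedy})$ combined with the control on balls $(\ref{criterenu})$, a \emph{reverse} H\"older inequality with exponents $\frac{s}{d+r}$ and $-\frac{s}{d+r-s}$, a dyadic block summation of the increments $e_r(a^{(k)},P)^r-e_r(a^{(k+1)},P)^r$, and only then a forward H\"older inequality to separate the $g_\varepsilon$-factor from the mismatch integral; the power-law auxiliary density, Minkowski's inequality and the equivariance/rescaling step are then exactly as in your last paragraph. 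You instead bypass the micro-macro step entirely: since $s<r$, a single forward H\"older inequality with exponents $\frac rs$ and $\frac{r}{r-s}$ reduces $e_s(a^{(n)}_{\theta,\mu},P)$ directly to $\theta^{1+\frac ds}e_r(a^{(n)},P)$ times the mismatch factor, and the plain $L^r$ Pierce-type bound from Theorem \ref{greedyrateoptimal} (with the same $\nu$, $g_\varepsilon$, Minkowski and equivariance) finishes the job. What each approach buys: yours is shorter, isolates why the weaker integrability condition suffices when $s<r$, and even yields $(n-1)^{-\frac1d}$ in place of $(n-2)^{-\frac1d}$ (the $(n-2)$ in the paper comes from the block summation in the general theorem, not from an indexing convention as you suggest); the paper's route is uniform with its treatment of $s\in[r,d+r)$ and produces exactly the displayed constant. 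Two minor caveats: your constant does not literally coincide with the stated $\widetilde{\kappa}_{\theta,\mu}^{\text{Greedy,Pierce}}$ (you get $\big(\tfrac rd\big)^{\frac1d}$ rather than $\big(\tfrac rd\big)^{\frac{r}{d(d+r)}}$ and miss the extra factor $2^{\frac1d}$), so matching the exact constant would require a small recomputation, though the rate-optimality content is fully established; and the restriction of the integral to $\{f>0\}$ does require $f_{\theta,\mu}=0$ a.e.\ on $\{f=0\}$, a support (admissibility) point you rightly flag and which is equally implicit in the paper's own proof.
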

 \subsubsection{Application to radial densities}
\label{radialdilat}
In this section, we consider probability distributions with radial densities. 
In other words, if the random variable $X$ has distribution $P=f.\lambda_d$, we consider the auxiliary distribution
$$\nu=\frac{f^{a}}{\int f^{a}d\lambda_d}.\lambda_d:=f_{a}. \lambda_d$$
for $a \in (0,1)$ where the density function $f$ is radial with non-increasing tails w.r.t. $a_1 \in A$ who is peakless w.r.t. $a_1$. These two terms are defined as follows
\begin{defi}
	$(a)$ Let $A \subset \R^d$. A function $f:\R^d \rightarrow \R_+$ is said to be almost radial non-increasing on A w.r.t. $a \in A$ if there exists a norm $\|.\|_0$ on $\R^d$ and real constant $ M\in (0,1]$ such that
	\begin{equation}
	\label{radialtails}
	\forall x \in A\setminus \{a\}, \quad f_{|B_{\|.\|_0}(a,\|x-a\|_0) } \geq Mf(x).  
	\end{equation}
	If $(\ref{radialtails})$ holds for $M=1$, then $f$ is called radial non-increasing on $A$ w.r.t. $a$. \\
	$(b)$ A set $A$ is said to be star-shaped and peakless with respect to $a_1$ if 
	\begin{equation}
	\label{peakless}
	\mathfrak{p}(A,|\cdot-a_1|) := \inf \left\{ \frac{\lambda_d(B(x,t)\cap A)}{\lambda_d(B(x,t))}; x \in A,0<t< |x-a_1| \right\} >0
	\end{equation}
	for any norm  $|\cdot|$ on $\R^d$.
\end{defi}
\begin{rmq}
	\noindent $(a)$~$(\ref{radialtails})$ reads $f(y) \geq M f(x)$ for all $x,y \! \in A\setminus\{a\}$
	for which $\|y-a\|_0 \leq \|x-a\|_0$.
		
		\smallskip
		\noindent $(b)$ If $f$ is radial non-increasing on $\R^d$ w.r.t. $a \in \R^d$ with parameter $\|.\|_0$,
		then there exists a non-increasing measurable function $g:(0, +\infty) \rightarrow \R_+$ satisfying $f(x) = g(\|x -a\|_0 )$ for every $x\neq a$.
		
		\smallskip
		\noindent $(c)$ From a practical point of view, many classes of distributions satisfy~$(\ref{radialtails})$, e.g. the $d$-dimensional normal distribution $\mathcal{N}(m,\sigma_d)$ for which one considers $h(y)=\frac{1}{(2\pi)^{\frac d2}\mbox{\rm det}(\sigma_d)^{\frac12}}e^{-\frac{y^2}{2}}$ and density $f(x)= h(\|x-m\|_0)$ where $\|x\|_0=|\sigma_d^{-\frac12}x|$, and the family of distributions defined by $f(x) \propto |x|^c e^{-a|x|^b}$, for every $x \in \R^d, a,b >0$ and $c>-d$, for which one considers $h(u)=u^ce^{-au^b}$. In the one dimensional case, we can mention the Gamma distribution, the Weibull distributions, the Pareto distributions and the log-normal distributions.\\
		\smallskip
		$(d)$ If $A=\R^d$, then $\mathfrak{p}(A,|\cdot-a|)=1$ for every $a \in \R^d$.

\smallskip
\noindent  $(e)$ The most typical unbounded sets satisfying~$(\ref{peakless})$ are convex cones that is cones  $K \subset \R^d$  of vertex $0$ with $0 \in K$ ($K\neq \varnothing$) and such that $\lambda x \in K$ for every $x \in K$ and $\lambda \geq 0$. For such convex cones $K$  with $\lambda_d(K) >0$, we even have that the lower bound 
		$$
		\mathfrak{p}(K) := \inf \left\{\frac{\lambda_d(B(x,t)\cap K)}{\lambda_d(B(x,t))}; x \!\in K,\,t>0 \right\} = \frac{\lambda_d\big(B(0,1)\cap K) \big)}{V_d}>0.
		$$
		Thus if $K= \R_+^d$, then $\mathfrak{p}(K)= 2^{-d}$.
\end{rmq}
\begin{thm}
		\label{zadorpierce}
		Let $s\in [r,d+r)$ and  $1-q=\frac{d+r}{d+r-s}$. Assume that $P=f. \lambda_d$ has finite polynomial moments of order $\frac{(1-a)(d+\ve)}{a}$ for some $a \in (0,1)$ and $\ve>0$. Let $a_1$ denote the $L^r$-median of $P$ and assume that $\mbox{supp}(P) \subset A$ and $a_1 \in A$ for some $A$ star-shaped and peakless with respect to $a_1$ and that $f$ is almost radial non-increasing with respect to $a_1$ in the sense of $(\ref{radialtails})$.  Assume 
		\begin{equation}
		\label{condradial}\int_{\{f>0\}} f^{\frac{-s(1+a)}{d+r-s}} f_{\theta,\mu}^{\frac{d+r}{d+r-s}} d\lambda_d<+\infty.
		\end{equation}
		Then, for every $n \geq 3$, 
		$$ e_s(a^{(n)}_{\theta,\mu},P) \leq \kappa_{\theta,\mu}^{\text{G,Z,P}} \; \theta^{1+\frac{d}{s}}\|f\|_{\frac{d}{d+r}}^{\frac{1}{d+r}}\|f\|_a^{\frac{a}{d+r}} \left(\int_{\{f>0\}} f^{\frac{-s(1+a)}{d+r-s}} f_{\theta,\mu}^{\frac{d+r}{d+r-s}} d\lambda_d \right)^{\frac{1}{|q|(d+r)}} (n-2)^{-\frac{1}{d}},
		$$	
		where
		$\kappa_{\theta,\mu}^{\text{G,Z,P}} \leq  \frac{2^{1+\frac1d}C_0^2\,r^{\frac 1d}\,}{d^{\frac1d}M^{\frac1d}V_d^{\frac1d}\mathfrak{p}(A,|\cdot-a_1|)^{\frac1d}} \min_{\varepsilon \in (0,\frac13)} \Big[\varphi_r(\varepsilon)^{-\frac 1d}\Big].$
\end{thm}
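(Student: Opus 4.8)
The plan is, via a scaling change of variables, to reduce the statement to a distortion-mismatch-type estimate for the original $L^r$-greedy quantizer $a^{(n)}$ of $P$ evaluated against the rescaled density, and then to peel off the $(\theta,\mu)$-dependence with a single application of H\"older's inequality so that it ends up sitting exactly inside the finite integral~\eqref{condradial}. Writing $e_s(a^{(n)}_{\theta,\mu},P)^s=\int_{\R^d}d(x,a^{(n)}_{\theta,\mu})^s\,dP(x)$ and substituting $x=\mu+\theta(z-\mu)$ --- so that $d(x,a^{(n)}_{\theta,\mu})=\theta\,d(z,a^{(n)})$ and $f(x)=f_{\theta,\mu}(z)$ --- one gets
\[
e_s(a^{(n)}_{\theta,\mu},P)^s=\theta^{\,s+d}\!\int_{\R^d}d(z,a^{(n)})^s\,f_{\theta,\mu}(z)\,d\lambda_d(z)=:\theta^{\,s+d}I_n,
\]
so it is enough to bound $I_n$ by a constant times $(n-2)^{-s/d}$. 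The pivotal quantity is $J_n:=\int_{\{f>0\}}f^{1+a}\,d(z,a^{(n)})^{r+d}\,d\lambda_d$; I will first show $J_n\le C\,\|f\|_a^{a}\,\|f\|_{d/(d+r)}\,(n-2)^{-(r+d)/d}$ and then recover $I_n$ from $J_n$.

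To control $J_n$, the first point is that the radial auxiliary law $\nu=f_a\lambda_d$ satisfies the control on balls~\eqref{criterenu} with a companion function $g_\varepsilon$ bounded below, on $\{f>0\}$, by $c_\varepsilon\,M^{a}\,\mathfrak{p}(A,|\cdot-a_1|)\,\|f\|_a^{-a}\,f^{a}$: for $z\in A$ and $t$ small the almost-radial-non-increasing property~\eqref{radialtails} gives $f\ge Mf(z)$ on the $\|\cdot\|_0$-sublevel set $\{\,\|\cdot-a_1\|_0\le\|z-a_1\|_0\,\}$, which --- up to the comparison of the norms $\|\cdot\|_0$ and $|\cdot|$ --- occupies a fixed fraction of $B(z,t)$, while~\eqref{peakless} handles the portion of $B(z,t)$ outside $A$; here one uses $d(z,a^{(n)})\le d(z,a_1)=|z-a_1|$ since $a_1\in a^{(n)}$. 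Inserting this $g_\varepsilon$ into the greedy micro-macro inequality~\eqref{micromacrogreedy} (a consequence of Proposition~\ref{incopt} with $\Gamma=a^{(n)}$) gives, for a suitable $c\in(0,\tfrac12)$,
\[
e_r(a^{(k)},P)^r-e_r(a^{(k+1)},P)^r\ \ge\ K_c\,\|f\|_a^{-a}\,J_k,\qquad k\ge1,
\]
with $K_c$ proportional to $V_d\,M^{a}\,\mathfrak{p}(A,|\cdot-a_1|)$ times a factor that, after optimizing in $\varepsilon$, is of $\min_{\varepsilon\in(0,1/3)}\varphi_r(\varepsilon)^{-1}$ type. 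Since $k\mapsto d(z,a^{(k)})$ is non-increasing (because $a^{(k)}\subset a^{(k+1)}$), $J_k\ge J_n$ for $k\le n$; summing over $k=m,\dots,n-1$ and discarding the nonpositive term $-e_r(a^{(n)},P)^r$ yields $J_n\le\|f\|_a^{a}\,e_r(a^{(m)},P)^r/\big((n-m)K_c\big)$ for every $1\le m<n$. Now feed in the hybrid Zador-Pierce estimate of \cite{papiergreedy} for the $L^r$-greedy error, which under the assumed finite polynomial moment of order $(1-a)(d+\varepsilon)/a$ is of the form $e_r(a^{(m)},P)^r\le C_0^{\,r}\,\|f\|_{d/(d+r)}\,(m-1)^{-r/d}$ for $m\ge2$, and choose $m=\lceil n/2\rceil$, so that $m-1\ge(n-2)/2$ and $n-m\ge(n-2)/2$ whenever $n\ge3$; this produces the announced bound on $J_n$.

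To finish I would apply H\"older on $\{f>0\}$ to the factorization
\[
d(z,a^{(n)})^s\,f_{\theta,\mu}(z)=\Big(f^{1+a}\,d(z,a^{(n)})^{r+d}\Big)^{\frac{s}{d+r}}\cdot\Big(f^{-\frac{(1+a)s}{d+r}}f_{\theta,\mu}(z)\Big),
\]
with the conjugate exponents $\tfrac{d+r}{s}>1$ and $1-q=\tfrac{d+r}{d+r-s}>1$ (legitimate precisely because $s\in[r,d+r)$): the first factor contributes $J_n^{\,s/(d+r)}$, while the second, raised to the power $1-q$, is exactly the integrand of~\eqref{condradial}. Hence $I_n\le J_n^{\,s/(d+r)}\big(\int_{\{f>0\}}f^{-\frac{s(1+a)}{d+r-s}}f_{\theta,\mu}^{\frac{d+r}{d+r-s}}\,d\lambda_d\big)^{\frac{d+r-s}{d+r}}$; substituting the bound on $J_n$, multiplying by $\theta^{s+d}$ and taking $s$-th roots --- using $\tfrac1{d+r}=\tfrac{s/(d+r)}{s}$ and $\tfrac{d+r-s}{(d+r)s}=\tfrac1{|q|(d+r)}$ --- gives the stated inequality. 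The factor $\|f\|_a^{a/(d+r)}$ comes from $\|f\|_a^{a}$ in $J_n$, the factor $\|f\|_{d/(d+r)}^{1/(d+r)}=\big(\int f^{d/(d+r)}\,d\lambda_d\big)^{1/d}$ from the Zador ingredient of the hybrid bound, the exponent $\tfrac1{|q|(d+r)}$ on~\eqref{condradial} from the H\"older weight, and $\kappa_{\theta,\mu}^{\text{G,Z,P}}$ from collecting the remaining constants --- the micro-macro constants $V_d$, $M$, $\mathfrak{p}(A,|\cdot-a_1|)$, $r/d$, $\varphi_r$ and the constant $C_0$ of the hybrid bound --- and bounding them to the uniform power $1/d$.

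The crux is the estimate on $J_n$, on two counts. Establishing~\eqref{criterenu} for $\nu=f_a\lambda_d$ forces one to juggle the radial norm $\|\cdot\|_0$ and the ambient norm $|\cdot|$ simultaneously and to quantify $\lambda_d\big(B(z,t)\cap\{\|\cdot-a_1\|_0\le\|z-a_1\|_0\}\cap A\big)$, which is where $\mathfrak{p}(A,|\cdot-a_1|)$ and a norm-comparison constant enter. More delicate still is that naively telescoping the micro-macro inequality from $k=2$ only gives $J_n=O\big((n-2)^{-1}\big)$, far too weak for $s$ near $d+r$; the sharp exponent $(r+d)/d$ is recovered only by telescoping over the middle block $m\le k\le n-1$ with $m\sim n/2$ and feeding in the a priori $L^r$-rate optimality of the greedy sequence at the intermediate level $m$. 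This bootstrap from the $L^r$ rate to the mismatched $L^{r+d}$-weighted rate is the heart of the matter, and it is exactly where the finite-moment hypothesis is indispensable, since it is what keeps the hybrid Zador-Pierce constant finite; the remaining H\"older split and exponent bookkeeping are routine.
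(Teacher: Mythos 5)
Your proposal is correct and takes essentially the same route as the paper's proof: the auxiliary measure $\nu=f_a\lambda_d$ with the ball control of Lemma~\ref{lem2}, the greedy micro--macro inequality $(\ref{micromacrogreedy})$, a block summation over the levels between $\lceil n/2\rceil$ and $n$ combined with the hybrid Zador--Pierce $L^r$-rate of \cite{papiergreedy} at level $\lceil n/2\rceil$, and a H\"older split with the conjugate exponents $\tfrac{d+r}{s}$ and $\tfrac{d+r}{d+r-s}$ producing exactly the integral in $(\ref{condradial})$. The only difference is organizational: you sum first and apply forward H\"older at the end to the $(\theta,\mu)$-free quantity $J_n$, whereas the paper applies the reverse H\"older inequality inside each increment (so that $e_s(a^{(k)},P_{\theta,\mu})^{d+r}$ is what gets summed), and the two orderings are equivalent.
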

\begin{rmq}
\label{remsura}
Note that the condition $(\ref{condradial})$ is more restrictive than the condition $(\ref{condslarge})$ in a sense that the set of values of $\theta$ for which $(\ref{condradial})$ is satisfied is smaller than the set for which $(\ref{condslarge})$ is satisfied. This will be made precise and clear in Section $\ref{examplesdilat}$ for particular distributions. \\
However, if $P$ has finite polynomial moments of any order $r>0$, i.e. the parameter $a$ in Theorem $\ref{zadorpierce}$ being as small as possible $(a \rightarrow 0^+)$, then the condition $(\ref{condradial})$ yield the same interval as $(\ref{condslarge})$.
\end{rmq}
 	\subsection{Proofs}
 	\subsubsection{General results}
 	We first state two rather theoretical results based on the auxiliary distribution $\nu$ and its companion function $g_{\ve}$ satisfying $(\ref{criterenu})$. More operating criterions based on moments of $P$ and/or the radial structure of its densities will appear as consequences of these Theorems by specifying the distribution $\nu$ (and $g_{\ve}$).
 	\begin{thm}
\label{thmgreedygeneral1}
	Let $s\in [r,d+r)$ and $1-q=\frac{d+r}{d+r-s}$. Let $(a_n)_{n \geq 1}$ be an $L^r(\R^d)$-optimal greedy quantization sequence of an $\R^d$-valued random variable $X$ with distribution $P=f.\lambda_d$ such that $\E|X|^{r+\delta}<+\infty$ for some $\delta>0$. Assume  there exists an auxiliary distribution $\nu$ and a Borel function $g_{\ve}$ satisfying $(\ref{criterenu})$ for $\ve \in (0,\tfrac 13)$ such that
	$$\int_{\{f>0\}} \left(\frac{f_{\theta,\mu}}{f\,g_{\ve}}\right)^{|q|} dP_{\theta,\mu}(x)<+\infty.$$
	Then, for every $n \geq 3$, 
	\begin{equation}
	\label{bornegreedygeneral}
	e_s(a^{(n)}_{\theta,\mu},P) \leq \theta^{1+\frac{d}{d+r}} \kappa_{\theta,\mu}^{greedy}\left(\int g_{\varepsilon}^{-\frac{r}{d}}dP \right)^{\frac{1}{d+r}}  \left(\int_{\{f>0\}} \left(\frac{f_{\theta,\mu}}{f\,g_{\ve}}\right)^{|q|}  dP_{\theta,\mu}(x) \right)^{\frac{1}{|q|(d+r)}} (n-2)^{-\frac{1}{d}} 
	\end{equation}
	where $\kappa_{\theta,\mu}^{greedy}= 2^{\frac{1}{d}} V_d^{-\frac{1}{d}} \big(\frac rd\big)^{\frac{r}{d(d+r)}} \min_{\varepsilon\in (0,\frac13)}\Big[\varphi_r(\varepsilon)^{-\frac{1}{d}}\Big]$.
\end{thm}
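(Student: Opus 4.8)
The plan is to start from the micro-macro inequality \eqref{micromacrogreedy} applied to the dilated sequence, then convert the resulting telescoping bound into a closed-form estimate on $e_s(a^{(n)}_{\theta,\mu},P)$ by a Hölder-type argument. First I would rewrite the $L^s$-error of the dilated grid as an integral against $P$ of $d(x,a^{(n)}_{\theta,\mu})^s$, and change variables $x \mapsto \mu+\theta(x-\mu)$ so that $d(\mu+\theta(x-\mu),a^{(n)}_{\theta,\mu}) = \theta\, d(x,a^{(n)})$, producing a factor $\theta^{s}$ in front and turning the reference measure into $P_{\theta,\mu}$; thus $e_s(a^{(n)}_{\theta,\mu},P)^s = \theta^s \int d(x,a^{(n)})^s \, dP_{\theta,\mu}(x) = \theta^s \int d(x,a^{(n)})^s \, \theta^d f_{\theta,\mu}\, d\lambda_d$. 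This is the step that pins down the exponent $1+\tfrac{d}{s}$ in $\theta$ (after taking the $s$-th root, $\theta^{s+d}$ becomes $\theta^{1+d/s}$ up to the extra tuning needed to match $\tfrac{d}{d+r}$, which is where the precise conjugate exponents enter).

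Next, having the quantity $\int d(x,a^{(n)})^s f_{\theta,\mu}\, d\lambda_d$, I would split the integrand by writing $f_{\theta,\mu} = \bigl(f_{\theta,\mu}/(f g_\ve)\bigr) \cdot (g_\ve f)$ and applying Hölder's inequality with exponents tied to $|q| = \tfrac{d+r}{s} - 1$ (so that $1-q = \tfrac{d+r}{d+r-s}$): one factor collects $\bigl(\tfrac{f_{\theta,\mu}}{f g_\ve}\bigr)^{|q|}$ integrated against $dP_{\theta,\mu}$ — which is finite by hypothesis — and the other factor is $\int d(x,a^{(n)})^{s(1+|q|)} g_\ve^{?} f \, d\lambda_d$ with exponents arranged so that $s(1+|q|) = d+r$. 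The remaining piece $\int d(x,a^{(n)})^{d+r} g_\ve \, dP$ is exactly the kind of quantity controlled by the micro-macro inequality \eqref{micromacrogreedy}: choosing $y$ and $\nu$ in Proposition \ref{incopt}, and using \eqref{criterenu} to replace $\nu(B(x,\tfrac{c}{c+1}d(x,a^{(n)})))$ by $g_\ve(x) V_d (\tfrac{c}{c+1})^d d(x,a^{(n)})^d$ (legitimate provided $\tfrac{c}{c+1}d(x,a^{(n)}) \le \ve|x-a_1|$, which holds since $d(x,a^{(n)}) \le d(x,a_1)$ and $\tfrac{c}{c+1} \le \ve$ for suitable $c$), we get
\[
e_r(a^{(n)},P)^r - e_r(a^{(n+1)},P)^r \;\geq\; c_{r,d}(\ve) \, V_d \int d(x,a^{(n)})^{d+r} g_\ve(x)\, dP(x)
\]
for an explicit constant $c_{r,d}(\ve) = \varphi_r(\ve)$ up to normalization. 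Summing over $n$ and bounding the telescoping sum by $e_r(a^{(2)},P)^r$, which is itself $O(1)$, then dividing by $n$-many roughly equal terms (monotonicity of $n \mapsto e_r(a^{(n)},P)$, as in the proof of Theorem \ref{greedyrateoptimal}), yields $\int d(x,a^{(n)})^{d+r} g_\ve\, dP \le C (n-2)^{-\frac{d+r}{d}}$ for $n \ge 3$; optimizing over $\ve \in (0,\tfrac13)$ produces the $\min_\ve[\varphi_r(\ve)^{-1/d}]$ factor and the $\bigl(\int g_\ve^{-r/d}dP\bigr)$ term via a final Hölder split to isolate the $g_\ve^{+1}$ weight from the $g_\ve^{-r/d}$ one.

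I expect the main obstacle to be bookkeeping the three nested Hölder/Jensen applications so that all exponents close up consistently — matching $s$, $d+r$, $|q|$, $r/d$ and the conjugate pair to land exactly on the stated constant $\kappa_{\theta,\mu}^{greedy} = 2^{1/d} V_d^{-1/d}(r/d)^{r/(d(d+r))}\min_\ve[\varphi_r(\ve)^{-1/d}]$ and the exponent $\tfrac{1}{|q|(d+r)}$ on the finiteness integral — rather than any single hard estimate. A secondary subtlety is ensuring the change of variables and the use of $a_1 \in a^{(n)}$ are compatible with the radius constraint in \eqref{criterenu} uniformly in $n$; this is handled by the remark following \eqref{criterenu} that $d(x,a^{(n)}) \le d(x,a_1)$. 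Everything else — the telescoping, the monotonicity argument giving the $(n-2)^{-1/d}$ rate, and taking $s$-th roots — is routine and parallels the proof of Theorem \ref{greedyrateoptimal} in \cite{papiergreedy}.
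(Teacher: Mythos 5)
Your route is structurally the paper's: the change of variables giving $e_s(a^{(n)}_{\theta,\mu},P)^s=\theta^s e_s(a^{(n)},P_{\theta,\mu})^s$, the micro-macro inequality \eqref{micromacrogreedy} combined with \eqref{criterenu} to lower-bound the $L^r$-increments by $V_d\,\varphi_r(\tfrac{c}{c+1})\int g_\ve(x)\, d(x,a^{(n)})^{d+r}dP(x)$, and a H\"older split with the conjugate pair $\big\{\tfrac{d+r}{s},\tfrac{d+r}{d+r-s}\big\}$ (your forward H\"older on $\int d(x,a^{(n)})^s\,\tfrac{f_{\theta,\mu}}{fg_\ve}\,g_\ve f\,d\lambda_d$ is the paper's reverse-H\"older step read in the other direction, and it does reproduce exactly the hypothesis integral $\int(\tfrac{f_{\theta,\mu}}{fg_\ve})^{|q|}dP_{\theta,\mu}$; note only that the exponent landing on the distance is $s\cdot\tfrac{d+r}{s}=d+r$, not $s(1+|q|)$ --- a bookkeeping slip of the type you flagged).

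The genuine gap is the telescoping step. Bounding the telescoped sum of increments by $e_r(a^{(2)},P)^r=O(1)$ and dividing by $n$ comparable terms yields only $\int g_\ve\, d(x,a^{(n)})^{d+r}dP=O(n^{-1})$, hence $e_s(a^{(n)},P_{\theta,\mu})=O\big(n^{-\frac{1}{d+r}}\big)$, which is strictly weaker than the stated $n^{-\frac1d}$; the bound $C(n-2)^{-\frac{d+r}{d}}$ you claim does not follow from that argument. What is needed (and what the paper does) is to sum the increments only over the dyadic block $k=n,\ldots,2n-1$, so that the telescoped sum is controlled by $e_r(a^{(\lceil n/2\rceil)},P)^r$ after reindexing, and then to invoke the $L^r$-rate optimality bound \eqref{gepsilongeneral} of Theorem \ref{greedyrateoptimal} to estimate this top term by a constant times $\big(\int g_\ve^{-r/d}dP\big)\,n^{-\frac rd}$. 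That extra factor $n^{-\frac rd}$ is precisely what upgrades $n^{-1}$ to $n^{-\frac{d+r}{d}}$, and it is also exactly where the factor $\big(\int g_\ve^{-\frac rd}dP\big)^{\frac{1}{d+r}}$ and the constants $2^{\frac1d}$ and $\big(\tfrac rd\big)^{\frac{r}{d(d+r)}}$ in $\kappa^{greedy}_{\theta,\mu}$ come from --- not from a ``final H\"older split'' isolating $g_\ve$ from $g_\ve^{-r/d}$, which is not needed. A minor further remark: the paper runs the block argument on $e_s(a^{(k)},P_{\theta,\mu})^{d+r}$ (H\"older applied at every level $k$, then monotonicity in $k$); running it instead on $\int g_\ve\, d(\cdot,a^{(k)})^{d+r}dP$ as you propose is equally legitimate, since that quantity is also nonincreasing in $k$, once the block/rate issue above is repaired.
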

\begin{proof}
We start by noticing that, for every $n \geq 1$,
	\begin{align*}
	e_s(a^{(n)}_{\theta,\mu},P)^s=& \int_{\R^d} d(z, a^{(n)}_{\theta,\mu})^s f(z) d\lambda_d(z)
	=   \int_{\R^d} \min_{x_i\in a^{(n)}, 1 \leq i \leq n}\big|z-\mu+\theta (\mu -x_i)\big|^s f(z) d\lambda_d(z).
	\end{align*}
	Then, by applying the change of variables $x=\frac{z-\mu}{\theta}+\mu $, one obtains
	\begin{align}
	\label{linkthetamugreedy}
	e_s(a^{(n)}_{\theta,\mu},P)^s=& \theta^{s+d} \int_{\R^d} d(x,a^{(n)})^sf(\mu+\theta(x-\mu)) d\lambda_d(x)\nonumber \\
	=&\theta^s \int_{\R^d} d(x,a^{(n)})^s dP_{\theta,\mu}(x)\nonumber \\
	= &   \theta^s e_s(a^{(n)},P_{\theta,\mu})^s.
	\end{align} 
	Now, let us study $e_s(a^{(n)},P_{\theta,\mu})$. Consider $c \in (0 , \frac{\varepsilon}{1-\varepsilon}] \cap (0,\frac12)$ so that $\frac{c}{c+1} \leq \varepsilon$. Hence, for any such $c$, $\ds \frac{c}{c+1} \, d(x,a^{(n)}) \leq \varepsilon |x-a_1|$ since $a_1 \in a^{(n)}$. Consequently, criteria $(\ref{criterenu})$ is satisfied, so there exists a function $g_{\varepsilon}$ such that 
	$$\nu \left(B\left(x, \frac{c}{c+1} \,d\big(x,a^{(n)}\big)  \right) \right) \geq V_d \, \left(\frac{c}{c+1} \right)^{d} d(x,a^{(n)})^d\, g_{\varepsilon}(x).$$
	Then, noticing that $ \frac{(1-c)^r-c^r}{(1+c)^r} \geq \frac{1}{3^r} -\big( \frac{c}{c+1}\big)^r \; >0$, since $c \in (0,\tfrac 12)$, $(\ref{micromacrogreedy})$ yields
	\begin{equation}
	\label{equ1}
	e_r(a^{(n)},P)^r - e_r(a^{(n+1)},P)^r  
	\geq V_d \, \varphi_r\left(\frac{c}{c+1} \right) \int g_{\varepsilon}(x) d(x,a^{(n)})^{d+r} dP(x)
	\end{equation}  
	where $\ds \varphi_r(u)=\left(\frac{1}{3^r}-u^r\right)u^d, \; u \in (0,\tfrac13)$. Consequently, 
	\begin{equation*}
	e_r(a^{(n)},P)^r - e_r(a^{(n+1)},P)^r  
	\geq V_d \, \varphi_r\left(\frac{c}{c+1} \right) \theta^{-d} \int_{\R^d} g_{\varepsilon}(x) d(x,a^{(n)})^{d+r} f(x) f_{\theta,\mu}^{-1}(x)dP_{\theta,\mu}(x).
	\end{equation*}  
	Now, applying the reverse H\"older inequality with conjugate exponents $p= \frac{s}{d+r} \in (0,1)$ and $q=\frac{-s}{d+r-s}<0$ yields
	\begin{align}
	\label{incutile}
	e_r(a^{(n)},P)^r - e_r(a^{(n+1)},P)^r  
	\geq & \, V_d \, \varphi_r\left(\frac{c}{c+1} \right) \theta^{-d} \left(\int_{\{f>0\}} \big(g_{\varepsilon}(x) f(x) f_{\theta,\mu}^{-1}(x)\big)^q dP_{\theta,\mu}(x) \right)^{\frac1q}\nonumber \\
	& \times \left(\int_{\R^d}d(x,a^{(n)})^{s} dP_{\theta,\mu}(x) \right)^{\frac1p}\nonumber\\
	\geq &\, V_d \, \varphi_r\left(\frac{c}{c+1} \right) \theta^{-d} \left(\int_{\{f>0\}} \left(\frac{f_{\theta,\mu}}{f\,g_{\ve}}\right)^{|q|} (x) dP_{\theta,\mu}(x) \right)^{\frac1q} e_s(a^{(n)},P_{\theta,\mu})^{d+r}.
	\end{align} 
	Consequently, denoting $\ds C_1=V_d \, \varphi_r\left(\frac{c}{c+1} \right) \theta^{-d}  \left(\int_{\{f>0\}} \left(\frac{f_{\theta,\mu}}{f\,g_{\ve}}\right)^{|q|} (x) dP_{\theta,\mu}(x) \right)^{\frac1q}$, one obtains
	\begin{equation}
	\label{incutile}
	e_r(a^{(n)},P)^r - e_r(a^{(n+1)},P)^r  \geq C_1 e_s(a^{(n)},P_{\theta,\mu})^{d+r}.
	\end{equation}
	At this stage, we know that $e_r(a^{(k)},P)$ is decreasing w.r.t $k$ and it is clear that it is the same for $e_s(a^{(k)},P_{\theta,\mu})$, since 
	$$e_s(a^{(k)},P_{\theta,\mu})= \E\left[\min_{1 \leq i \leq k} |a_i-\frac{X-\mu}{\theta}-\mu|^s\right]^{\frac1s}\geq \E\left[\min_{1 \leq i \leq k+1} |a_i-\frac{X-\mu}{\theta}-\mu|^s\right]^{\frac1s}=e_s(a^{(k+1)},P_{\theta,\mu}),$$
	so, one has 
	\begin{align*}
	n\, e_s(a^{(2n-1)},P_{\theta,\mu})^{d+r} \leq \sum_{k=n}^{2n-1} e_s(a^{(k)},P_{\theta,\mu})^{d+r} \leq \frac{1}{C_1} \sum_{k=n}^{2n-1} e_r(a^{(k)},P)^r - e_r(a^{(k+1)} ,P)^r \leq \frac{1}{C_1} e_r(a^{(n)},P)^r.	
	\end{align*}
	and, since $\ds 2 \left\lceil \frac{n}{2} \right\rceil -1 \leq n$, 
	$$\frac{n}{2} e_s(a^{(n)},P_{\theta,\mu})^{d+r} \leq \left\lceil \frac{n}{2} \right\rceil e_s(a^{(n)},P_{\theta,\mu})^{d+r} \leq  \left\lceil \frac{n}{2} \right\rceil e_s\left(a^{2 \left\lceil \frac{n}{2} \right\rceil -1},P_{\theta,\mu}\right)^{d+r} \leq \frac{1}{C_1}e_r\left(a^{ \left\lceil \frac{n}{2} \right\rceil },P\right)^{r}.$$
	Consequently, using the result of Theorem $\ref{greedyrateoptimal}$ 
	\begin{align*}
	e_s(a^{(n)},P_{\theta,\mu})  \leq & \left(\frac{2}{C_1}\right)^{\frac{1}{d+r}} n^{-\frac{1}{d+r}} e_r\left(a^{ \left\lceil \frac{n}{2} \right\rceil },P\right)^{\frac{r}{d+r}} \\
	\leq & \, 2^{\frac{1}{d}} V_d^{-\frac{1}{d}} \big(\frac rd\big)^{\frac{r}{d(d+r)}} \varphi_r\left(\frac{c}{c+1}\right)^{-\frac{1}{d}} \theta^{\frac{d}{d+r}} \left(\int_{\R^d} g_{\varepsilon}^{-\frac{r}{d}}dP \right)^{\frac{1}{d+r}} \\
	& \times \left(\int_{\{f>0\}} \left(\frac{f_{\theta,\mu}}{f\,g_{\ve}}\right)^{|q|} (x) dP_{\theta,\mu}(x) \right)^{\frac{1}{|q|(d+r)}} (n-2)^{-\frac{1}{d}}.
	\end{align*}
	We are led to study $\varphi_r\left( \frac{c}{c+1}\right)^{-\frac 1d}$ subject to the constraint $c \in \big(0,\frac{\varepsilon}{1-\varepsilon}\big] \cap \big(0,\frac12\big)$.
	$\varphi_r$ is increasing in the neighborhood of $0$ and $\varphi_r(0)$, so, one has, for every $\varepsilon \in (0,\frac 13)$ small enough,
	$\varphi_r\left( \frac{c}{c+1}\right) \leq \varphi_r(\varepsilon), \mbox{ for } c \in (0,\tfrac{\varepsilon}{1-\varepsilon}].$ 
	This leads to specify $c$ as  
	$ c=\frac{\varepsilon}{1-\varepsilon} \mbox{, so that}  \frac{c}{c+1}=\varepsilon$ which means that one can use 
	\begin{align}
	\label{varphimin}
	\varphi_r\left( \frac{c}{c+1}\right)^{-\frac{1}{d+r}} \leq  \min_{\varepsilon \in (0,\tfrac13)}\Big[\varphi_r(\varepsilon)^{-\frac{1}{d+r}} \Big]
	\end{align}
	 which yields
	 \begin{align}
	 \label{etoile}
	e_s(a^{(n)},P_{\theta,\mu})  \leq 	&\, 2^{\frac{1}{d}} V_d^{-\frac{1}{d}} \big(\frac rd\big)^{\frac{r}{d(d+r)}} \min_{\varepsilon \in (0,\frac13)}\big[\varphi_r(\varepsilon)^{-\frac{1}{d}}\big] \theta^{\frac{d}{d+r}} \left(\int_{\R^d} g_{\varepsilon}^{-\frac{r}{d}}dP \right)^{\frac{1}{d+r}}  \nonumber \\
	& \times \left(\int_{\{f>0\}} \left(\frac{f_{\theta,\mu}}{f\,g_{\ve}}\right)^{|q|}(x) dP_{\theta,\mu}(x) \right)^{\frac{1}{|q|(d+r)}} (n-2)^{-\frac{1}{d}}.
	 \end{align}
	 Finally, one concludes by merging this with $(\ref{linkthetamugreedy})$.
	\hfill $\square$\\
\end{proof}
\begin{thm}
	\label{thmgreedygeneral2}
	Let $s<r$ and $X$ a random variable in $\R^d$ with distribution $P=f.\lambda_d$ and such that $\E|X|^{r+\delta}<+\infty$ for some $\delta>0$. Assume there exists an auxiliary distribution $\nu$ and a Borel function $g_{\ve}$ satisfying $(\ref{criterenu})$ for every $\ve\in (0,\tfrac 13)$ such that 
	$$\int_{\R^d} g_{\varepsilon}^{-\frac{r}{d}}dP <+\infty\quad \mbox{and} \quad  \int_{\{f>0\}} f^{-\frac{s}{r-s}}f_{\theta,\mu}^{\frac{r}{r-s}}d\lambda_d <+\infty.$$ 
	Then, for every $n \geq 3$, 
	\begin{equation}
	\label{bornegreedygeneral2}
	e_s(a^{(n)}_{\theta,\mu},P) \leq \theta^{1+\frac ds} \kappa_{\theta,\mu}^{\text{Greedy}}\left(\int_{\R^d} g_{\varepsilon}^{-\frac{r}{d}}dP \right)^{\frac{1}{r}}  \left( \int_{\{f>0\}} f^{-\frac{s}{r-s}}f_{\theta,\mu}^{\frac{r}{r-s}}d\lambda_d \right)^{\frac{r-s}{sr}}  (n-2)^{-\frac{1}{d}}
	\end{equation}
	where $\kappa_{\theta,\mu}^{\text{Greedy}}= 2^{1+\frac{1}{d}} V_d^{-\frac{1}{d}} \big(\frac rd\big)^{\frac{r}{d(d+r)}} \min_{\varepsilon\in (0,\frac13)}\big[\varphi_r(\varepsilon)^{-\frac{1}{d}}\big]$.
\end{thm}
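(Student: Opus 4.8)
The plan is to mimic the two‑stage scheme of the proof of Theorem~\ref{thmgreedygeneral1} — first reduce, via the scaling identity \eqref{linkthetamugreedy}, to a comparison between the $L^s$‑error of the \emph{unscaled} greedy grid $a^{(n)}$ against the scaled distribution $P_{\theta,\mu}$ and the $L^r$‑error of $a^{(n)}$ against $P$ itself, and then feed in the $L^r$‑rate optimality of greedy sequences — but now with a crucial simplification: since $s<r$, the \emph{reverse} Hölder inequality used in Theorem~\ref{thmgreedygeneral1} is replaced by an ordinary (forward) Hölder inequality. That single change is what decouples the two standing hypotheses $\int g_\varepsilon^{-r/d}\,dP<+\infty$ and $\int_{\{f>0\}} f^{-\frac{s}{r-s}}f_{\theta,\mu}^{\frac{r}{r-s}}\,d\lambda_d<+\infty$.

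Concretely, I would first recall from \eqref{linkthetamugreedy} that the change of variable $x=\tfrac{z-\mu}{\theta}+\mu$ gives, for every $n\ge 1$,
\[
e_s\big(a^{(n)}_{\theta,\mu},P\big)^s=\theta^{s+d}\int_{\R^d} d\big(x,a^{(n)}\big)^s f_{\theta,\mu}(x)\,d\lambda_d(x),
\]
so everything reduces to bounding $\int d(x,a^{(n)})^s f_{\theta,\mu}\,d\lambda_d$. On $\{f>0\}$ I would write $d(x,a^{(n)})^s f_{\theta,\mu}=\big(d(x,a^{(n)})^r f\big)^{s/r}\big(f_{\theta,\mu}f^{-s/r}\big)$ and apply Hölder's inequality with the conjugate exponents $p=\tfrac rs>1$ and $p'=\tfrac{r}{r-s}$ (legitimate precisely because $s<r$), which yields
\[
\int d\big(x,a^{(n)}\big)^s f_{\theta,\mu}\,d\lambda_d\;\le\;e_r\big(a^{(n)},P\big)^{s}\Big(\int_{\{f>0\}} f^{-\frac{s}{r-s}}f_{\theta,\mu}^{\frac{r}{r-s}}\,d\lambda_d\Big)^{\frac{r-s}{r}},
\]
the last integral being finite by assumption (and, since this finiteness forces $f_{\theta,\mu}$ to vanish $\lambda_d$‑a.e.\ on $\{f=0\}$, the integral over $\R^d$ really does coincide with the one over $\{f>0\}$). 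Taking $s$‑th roots gives the clean intermediate estimate
\[
e_s\big(a^{(n)}_{\theta,\mu},P\big)\;\le\;\theta^{1+\frac ds}\,e_r\big(a^{(n)},P\big)\,\Big(\int_{\{f>0\}} f^{-\frac{s}{r-s}}f_{\theta,\mu}^{\frac{r}{r-s}}\,d\lambda_d\Big)^{\frac{r-s}{sr}}.
\]

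Finally I would bound the unscaled error $e_r(a^{(n)},P)$ by applying Theorem~\ref{thmgreedygeneral1} (equivalently, the micro–macro inequality \eqref{micromacrogreedy} followed by the telescoping argument in its proof) in the trivial case $s=r$, $\theta=1$, $\mu=0$: using the auxiliary $\nu$ and companion function $g_\varepsilon$ of \eqref{criterenu} together with $\int g_\varepsilon^{-r/d}\,dP<+\infty$, this gives, for $n\ge 3$,
\[
e_r\big(a^{(n)},P\big)\;\le\;2^{\frac1d}V_d^{-\frac1d}\Big(\tfrac rd\Big)^{\frac{r}{d(d+r)}}\min_{\varepsilon\in(0,\frac13)}\big[\varphi_r(\varepsilon)^{-\frac1d}\big]\Big(\int g_\varepsilon^{-\frac rd}\,dP\Big)^{\frac1r}(n-2)^{-\frac1d},
\]
and substituting this into the previous display produces exactly the announced inequality, the surplus factor $2$ in $\kappa^{\text{Greedy}}_{\theta,\mu}$ (compared with $\kappa^{greedy}_{\theta,\mu}$ of Theorem~\ref{thmgreedygeneral1}) being harmless slack.

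I do not expect a serious obstacle: the argument is strictly shorter than that of Theorem~\ref{thmgreedygeneral1} because the forward Hölder step in the middle completely separates the quantization quantity $e_r(a^{(n)},P)$ from the purely analytic integral $\int f^{-s/(r-s)}f_{\theta,\mu}^{r/(r-s)}\,d\lambda_d$. The only points requiring a little care are checking that the Hölder split is licit — i.e.\ that the part of $P_{\theta,\mu}$ that is singular with respect to $P$ contributes nothing, which is guaranteed by the finiteness hypothesis — and keeping track of the powers of $\theta$ and of the constants coming through Theorem~\ref{greedyrateoptimal}/\ref{thmgreedygeneral1} so that the final rate comes out as $(n-2)^{-1/d}$; both are routine bookkeeping.
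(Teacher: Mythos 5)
Your route is genuinely different from the paper's, and in its quantization-theoretic part it is sound and even slightly sharper. The paper does \emph{not} prove Theorem \ref{thmgreedygeneral2} by a direct comparison: it re-enters the micro--macro machinery of Theorem \ref{thmgreedygeneral1}, i.e.\ it starts from the increment inequality (\ref{micromacrogreedy}), applies the \emph{reverse} H\"older inequality with $p=\tfrac{s}{d+r}\in(0,1)$, $q=-\tfrac{s}{d+r-s}$ to get (\ref{incutile}) with the mixed integral $\int_{\{f>0\}}g_\varepsilon^qf^qf_{\theta,\mu}^{1-q}d\lambda_d$, telescopes over the levels $n,\dots,2n-1$, invokes the $L^r$ rate for $e_r(a^{\lceil n/2\rceil},P)$, and only at the very end applies a forward H\"older (with exponents $q'=\tfrac{r}{d|q|}$, $p'$ conjugate) to split that mixed integral into the two integrals appearing in the hypotheses. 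You instead bypass the increments entirely: after the scaling identity (\ref{linkthetamugreedy}) you apply one forward H\"older with exponents $\tfrac rs,\ \tfrac{r}{r-s}$ to bound $e_s(a^{(n)},P_{\theta,\mu})$ directly by $e_r(a^{(n)},P)$ times the purely analytic factor, and then quote Theorem \ref{thmgreedygeneral1} at $s=r$, $\theta=1$ (whose hypothesis is exactly $\int g_\varepsilon^{-r/d}dP<+\infty$) for the $L^r$ rate. The exponents of $\theta$ and of the two integrals come out right, and your constant $2^{\frac1d}V_d^{-\frac1d}(\tfrac rd)^{\frac{r}{d(d+r)}}\min_\varepsilon\varphi_r(\varepsilon)^{-\frac1d}$ is smaller than $\kappa^{\text{Greedy}}_{\theta,\mu}$, so your estimate implies the stated one. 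This is essentially the mechanism used in \cite{Sagna08} for $s<r$, and it is shorter than the paper's argument.

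One point, however, is genuinely wrong as you state it: the parenthetical claim that the finiteness of $\int_{\{f>0\}}f^{-\frac{s}{r-s}}f_{\theta,\mu}^{\frac{r}{r-s}}d\lambda_d$ ``forces $f_{\theta,\mu}$ to vanish $\lambda_d$-a.e.\ on $\{f=0\}$''. That integral is taken over $\{f>0\}$ only and carries no information whatsoever about $f_{\theta,\mu}$ on $\{f=0\}$ (think of $P=U([0,1])$, $\mu=0$, $\theta<1$: the integral equals $1$ while $f_{\theta,\mu}>0$ on all of $[0,1/\theta]$, and the contracted grid $a^{(n)}_{\theta,0}\subset[0,\theta]$ cannot make $e_s$ tend to $0$). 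What your H\"older split really needs is $\lambda_d\big(\{f=0\}\cap\{f_{\theta,\mu}>0\}\big)=0$, i.e.\ precisely the $P$-admissibility of $(\theta,\mu)$ that the paper itself invokes for the case $s<r$ in Section \ref{examplesdilat}. To be fair, the paper's own proof silently makes the same identification (its reverse-H\"older factor is the full $e_s(a^{(n)},P_{\theta,\mu})$ while the companion integral is restricted to $\{f>0\}$), so your argument is on the same footing as the published one; but you should either add the admissibility assumption explicitly or treat the region $\{f=0\}$ separately, rather than derive it from the finiteness hypothesis, which does not imply it.
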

\begin{proof}
	We start from Equation $(\ref{incutile})$ in the proof of Theorem $\ref{thmgreedygeneral1}$ recalled below 
	$$e_r(a^{(n)},P)^r - e_r(a^{(n+1)},P)^r  
	\geq C_1 e_s(a^{(n)},P_{\theta,\mu})^{d+r}$$
	where $\ds C_1=\varphi_r\left(\frac{c}{c+1} \right) \theta^{-d+\frac dq} \left(\int_{\{f>0\}} g_{\varepsilon}^q(x) f^q(x) f_{\theta,\mu}^{1-q}(x) d\lambda_d(x) \right)^{\frac1q}$ and $q=-\frac{s}{d+r-s}<0$ so that $1-q=\frac{d+r}{d+r-s}$.
	At this stage, follow the lines of the proof of Theorem $\ref{thmgreedygeneral1}$ to get, for $n \geq 3$,
	\begin{align*}
	e_s(a^{(n)},P_{\theta,\mu})  \leq & \left(\frac{2}{C_1}\right)^{\frac{1}{d+r}} (n-1)^{-\frac{1}{d+r}} e_r\left(a^{ \left\lceil \frac{n}{2} \right\rceil },P\right)^{\frac{r}{d+r}} \\
	\leq & \, \kappa_{\theta,\mu}^{\text{Greedy}} \theta^{\frac ds} \left(\int_{\R^d} g_{\varepsilon}^{-\frac{r}{d}}dP \right)^{\frac{1}{d+r}} \left( \int_{\{f>0\}} g_{\varepsilon}^q f^q f_{\theta,\mu}^{1-q }d\lambda_d \right)^{\frac{1}{|q|(d+r)}} (n-2)^{-\frac{1}{d}}.
	\end{align*}
	where $\kappa_{\theta,\mu}^{\text{Greedy}}= 2^{1+\frac{1}{d}} V_d^{-\frac{1}{d}} \big(\frac rd\big)^{\frac{r}{d(d+r)}} \min_{\varepsilon\in (0,\frac13)}\big[\varphi_r(\varepsilon)^{-\frac{1}{d}}\big]$.\\
	Now, since $s<r$, one can apply H\"older inequality with the conjugate exponents $p'=\frac{r(d+r-s)}{r(d+r-s)-ds}>1$ and $q'=\frac{r}{d|q|}=\frac{r(d+r-s)}{ds}>1$ which yields
	\begin{align*}
	\int_{\{f>0\}} g_{\varepsilon}^q f^q f_{\theta,\mu}^{1-q }d\lambda_d=\int_{\{f>0\}} g_{\varepsilon}^q f^{q-1} f_{\theta,\mu}^{1-q }dP
	\leq  \left( \int_{\R^d} g_{\varepsilon}^{-\frac rd}dP \right)^{\frac{1}{q'}} \left( \int_{\{f>0\}} f^{\frac{r}{s-r}+1} f_{\theta,\mu}^{\frac{r}{r-s}} d\lambda_d \right)^{\frac{1}{p'}}
	\end{align*}
	so 
	$$\left( \int_{\{f>0\}} g_{\varepsilon}^q f^q f_{\theta,\mu}^{1-q }d\lambda_d \right)^{-\frac{1}{q(d+r)}} \leq \left( \int_{\R^d} g_{\varepsilon}^{-\frac rd}dP \right)^{\frac{d}{r(d+r)}}\left( \int_{\{f>0\}} f^{\frac{s}{s-r}} f_{\theta,\mu}^{\frac{r}{r-s}} d\lambda_d \right)^{\frac{r-s}{rs}}$$
	and 
	$$e_s(a^{(n)},P_{\theta,\mu})  \leq \, \kappa_{\theta,\mu}^{\text{Greedy}} \theta^{\frac ds} \left(\int_{\R^d} g_{\varepsilon}^{-\frac{r}{d}}dP \right)^{\frac{1}{r}} \left( \int_{\{f>0\}} f^{-\frac{s}{r-s}}f_{\theta,\mu}^{\frac{r}{r-s}}d\lambda_d \right)^{\frac{r-s}{sr}} (n-2)^{-\frac{1}{d}}.$$
	and one deduces the result just as in the proof of Theorem $\ref{thmgreedygeneral1}$.
	\hfill $\square$
\end{proof}
 	\subsubsection{Proofs of main results}
 	\begin{refproofgreedylarge}
 	We consider $\nu(dx)=\gamma_{r,\delta}(x) \lambda_d(dx)$ where
	$$\gamma_{r,\delta}(x)=\frac{K_{\delta,r}}{(1 \vee |x-a_1|)^{d\frac{r+\delta}{r+\delta-\eta}}}\;  \qquad \mbox{with} \qquad  K_{\delta,r}= \left(\int \frac{dx}{(1 \vee |x|)^{\frac{r+\delta}{r+\delta-\eta}}} \right)^{-1} <+\infty$$
	is a probability density with respect to the Lebesgue measure on $\R^d$. For every $x \in \R^d$ such that $\varepsilon |x-a_1| \geq t $ and every $y \in B(x,t)$, one has
	$|y-a_1| \leq |y-x|+|x-a_1| \leq (1+\varepsilon) |x-a_1|$ so that 
	$$\nu(B(x,t)) \geq \frac{K_{\delta,r} V_d \,t^d}{\big(1 \vee(1+\varepsilon)|x-a_1|\big)^{d\frac{r+\delta}{r+\delta-\eta}}}.$$ 
	Hence, $(\ref{criterenu})$ is satisfied with $$g_{\varepsilon}(x)=\frac{K_{\delta,r}}{\big(1 \vee (1+\varepsilon)|x-a_1|\big)^{\frac{r+\delta}{r+\delta-\eta}}}$$
	so we apply Theorem $\ref{thmgreedygeneral1}$ where one has to handle the term 
	$$\left(\int_{\{f>0\}} \left(\frac{f_{\theta,\mu}}{f\,g_{\ve}}\right)^{|q|}(x)dP_{\theta,\mu}(x) \right)^{\frac{1}{|q|(d+r)}}=\theta^{\frac{d}{|q|(d+r)}}\left(\int_{\{f>0\}} g_{\varepsilon}^q \left(\frac{f_{\theta,\mu}}{f}\right)^{1-q} dP(x) \right)^{\frac{1}{|q|(d+r)}}$$
where $q=\frac{-s}{d+r-s}<0$ so that $1-q=\frac{d+r}{d+r-s}$. To do this, we apply H\"older inequality with the conjugate coefficients $p'=\frac{r+\delta-\eta}{d|q|}>1$ (due to the moment assumption on $P$) and $q'=\frac{r+\delta-\eta}{r+\delta-\eta-d|q|}>1$. This yields
	\begin{align*}
	\left(\int_{\{f>0\}} g_{\varepsilon}^q \left(\frac{f_{\theta,\mu}}{f}\right)^{1-q} dP \right)^{\frac{1}{|q|(d+r)}}\leq & \left(\int_{\R^d}g_{\ve}^{qp'}dP\right)^{\frac{1}{p'|q|(d+r)}} \left( \int_{\{f>0\}}\left(\frac{f_{\theta,\mu}}{f}\right)^{(1-q)q'}dP\right)^{\frac{1}{q'|q|(d+r)}}\\
	\leq & \left(\int_{\R^d}g_{\ve}^{qp'}dP\right)^{\frac{1}{p'|q|(d+r)}} \left( \int_{\{f>0\}}\left(\frac{f_{\theta,\mu}}{f}\right)^{(1-q)q'}fd\lambda_d \right)^{\frac{1}{q'|q|(d+r)}}
	\end{align*}
	so that
	\begin{align}
	\label{holderbigintegral}
	\left(\int_{\{f>0\}} \big(g_{\varepsilon}(x) f(x) f_{\theta,\mu}^{-1}(x)\big)^q dP_{\theta,\mu}(x) \right)^{\frac{1}{|q|(d+r)}} &\leq \theta^{\frac{d}{|q|(d+r)}} \left(\int_{\R^d}g_{\ve}^{qp'}dP\right)^{\frac{1}{p'|q|(d+r)}} \nonumber \\
	& \quad \times \left( \int_{\{f>0\}}\left(\frac{f_{\theta,\mu}}{f}\right)^{(1-q)q'}fd\lambda_d \right)^{\frac{1}{q'|q|(d+r)}}.
	\end{align}
	Consequently,	
	\begin{align*}
	e_s(a^{(n)}_{\theta,\mu},P)& \leq  \theta^{1+\frac{d}{s}} \kappa_{\theta,\mu}^{\text{Greedy}}\left( \int_{\{f>0\}}\left(\frac{f_{\theta,\mu}}{f}\right)^{(1-q)q'}fd\lambda_d \right)^{\frac{1}{q'|q|(d+r)}}\left(\int_{\R^d} g_{\varepsilon}^{-\frac{r}{d}}dP \right)^{\frac{1}{d+r}} \\
	& \times \left(\int_{\R^d}g_{\ve}^{qp'}dP\right)^{\frac{1}{p'|q|(d+r)}}  (n-2)^{-\frac{1}{d}}
	\end{align*}
	By our choice of $g_{\ve}$, 
	$$\left(\int_{\R^d} g_{\varepsilon}^{-\frac{r}{d}}dP \right)^{\frac{1}{d+r}}\leq  \left(\int_{\R^d}\big(1 \vee (1+\ve)\|x-a_1\| \big)^{\frac{r(r+\delta)}{d(r+\delta-\eta)}} dP \right)^{\frac{1}{d+r}}$$ 
	and $$\left(\int_{\R^d}g_{\ve}^{qp'}dP\right)^{\frac{1}{p'|q|(d+r)}} \leq \left(\int_{\R^d}\big(1 \vee (1+\ve)\|x-a_1\| \big)^{r+\delta} dP \right)^{\frac{1}{p'|q|(d+r)}}.$$
	At this stage, notice that $\ds \frac{r(r+\delta)}{d(r+\delta-\eta)}<r+\delta$ since $r+\delta-\eta>\frac{sd}{d+r-s}>\frac{r}{d}$. So, 
	$$\int_{\R^d}\big(1 \vee (1+\ve)\|x-a_1\| \big)^{\frac{r(r+\delta)}{d(r+\delta-\eta)}} dP <\int_{\R^d}\big(1 \vee (1+\ve)\|x-a_1\| \big)^{r+\delta} dP$$
	since the function $x \mapsto a^x$ is increasing w.r.t $x$ for $a>1$. Moreover, owing to $L^{r+\delta}$-Minkowski inequality, 
	$$\left(\int_{\R^d}\big(1 \vee (1+\ve)\|x-a_1\| \big)^{r+\delta} dP \right)^{\frac{1}{d+r}\big(1+\frac{1}{|q|p'}\big)} \leq \Big(1+(1+\ve)\sigma_{r+\delta}(P)\Big)^{\frac{r+\delta}{r+d}\big(1+\frac{1}{|q|p'}\big)}$$
	where 	$\sigma_{r+\delta}(P)=\inf_{a}\|X-a\|_{r+\delta}$ is the $L^{r+\delta}$-standard deviation of $P$. 	Consequently, 
	\begin{align*}
	 e_s(a^{(n)}_{\theta,\mu},P) \leq &\theta^{1+\frac{d}{s}} \frac{\kappa_{\theta,\mu}^{\text{Greedy}}}{K_{\delta,r}^{\frac1d}}\left(\int_{\{f>0\}}\left(\frac{f_{\theta,\mu}}{f}\right)^{(1-q)q'}fd\lambda_d \right)^{\frac{1}{q'|q|(d+r)}}\Big(1+(1+\ve)\sigma_{r+\delta}(P)\Big)^{\frac{(r+\delta)(1+|q|p')}{|q|p'(r+d)}}(n-2)^{-\frac1d}. 
	\end{align*}
	Now, we introduce an equivariance argument. For $\lambda>0$, let $X_{\lambda}:=\lambda(X-a_1)+a_1$ and $(\alpha_{\lambda,n})_{n\geq 1}:=(\lambda(\alpha_n-a_1)+a_1)_{n \geq 1}$. It is clear that  $e_r(\alpha^{(n)},X)=\frac{1}{\lambda}e_r(\alpha_{\lambda}^{(n)},X_{\lambda})$. 
		Plugging this in the previous inequality yields 
		\begin{align*}
	 e_s(a^{(n)}_{\theta,\mu},P)  & \leq \theta^{1+\frac{d}{s}}\kappa_{\theta,\mu}^{\text{Greedy}}K_{\delta,r}^{-\frac1d}\left(\int_{\{f>0\}}\left(\frac{f_{\theta,\mu}}{f}\right)^{(1-q)q'}fd\lambda_d \right)^{\frac{1}{q'|q|(d+r)}}\\
	 &\quad  \times \frac{1}{\lambda}\Big(1+(1+\ve)\lambda \sigma_{r+\delta}(P)\Big)^{\frac{(r+\delta)(1+|q|p')}{|q|p'(r+d)}}(n-2)^{-\frac1d}. 
	\end{align*}
	Finally, one deduces the result by setting $\ds \lambda= \frac{1}{(1+\varepsilon)\sigma_{r+\delta}}$. 
 \hfill $\square$
\medskip 
\end{refproofgreedylarge}

 \begin{refproofgreedysmall}
	We consider the function $g_{\varepsilon}$ defined by 
	$$g_{\varepsilon}(x)=\frac{K_{\delta,r}}{\big(1 \vee(1+\varepsilon)|x-a_1|\big)^{d(1+\frac {\delta}{r})}}$$
	where $\ds K_{\delta,r}= \left(\int \frac{dx}{(1 \vee |x|)^{d(1+\frac {\delta}{r})}} \right)^{-1} <+\infty$. 
	One has 
	\begin{align*}
	\left(\int_{\R^d}g_{\varepsilon}^{-\frac rd}(x) dP\right)^{\frac{1}{r}}\leq K_{\delta,r}^{-\frac{1}{d}}\left( \int \big( 1 \vee (1+\varepsilon)|x-a_1|\big)^{r+\delta}dP\right)^{\frac{1}{r}}
	\end{align*}
	so that, applying the $L^{r+\delta}$-Minkowski inequality, one obtains	
	$$\left(\int g_{\varepsilon}(x)^{-\frac rd} dP(x) \right)^{\frac{1}{r}} \leq K_{\delta,r}^{-\frac{1}{d}} \left(1 + (1+\varepsilon)\sigma_{r+\delta} \right)^{1+\tfrac{\delta}{r}}.$$
	Then, applying Theorem $\ref{thmgreedygeneral2}$ yields , for every $n \geq 3$,
	\begin{align}
	e_s(a^{(n)}_{\theta,\mu},P) \leq & \; \theta^{1+\frac ds}  \kappa_{\theta,\mu}^{\text{Greedy}} K_{\delta,r}^{-\frac1d}\left(1 + (1+\varepsilon)\sigma_{r+\delta} \right)^{1+\tfrac{\delta}{r}}\left( \int_{\{f>0\}} f^{-\frac{s}{r-s}}f_{\theta,\mu}^{\frac{r}{r-s}}d\lambda_d \right)^{\frac{r-s}{sr}} (n-2)^{-\frac{1}{d}}
	\end{align}
	Finally, using the equivariance argument introduced in the proof of Theorem $\ref{Pierce1}$, one deduces, in the same spirit, the result by considering $\lambda=\frac{1}{(1+\varepsilon)\sigma_{r+\delta}(P)}$.	
	\hfill $\square$
	\bigskip
\end{refproofgreedysmall}

For the proof of Theorem \ref{zadorpierce}, we use the following technical lemma (established in $\cite{papiergreedy}$).
\begin{lem}
	\label{lem2}
	Let $\nu=f . \lambda_d$ be a probability measure on $\R^d$ where $f$ is almost radial non-increasing on $A\in \mathcal{B}(\R^d)$ w.r.t. $a_1 \in A$, $A$ being star-shaped relative to $a_1$ and satisfying $(\ref{peakless})$. Then, for every $x \in A$ and $ t \in (0, |x-a_1|)$,
	$$\nu(B(x,t))\geq M \mathfrak{p}(A,|\cdot-a_1|)(2C_0^2)^{-d} V_d f(x) t^d$$
	where $C_0 \in [1,+\infty)$ is such that, for every $x \in \R^d$, $\ds \frac{1}{C_0}\|x\|_0 \leq |x| \leq C_0 \|x\|_0$.
	\end{lem}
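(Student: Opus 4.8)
The plan is to bound $\nu(B(x,t))=\int_{B(x,t)}f\,d\lambda_d$ from below by restricting the integral to a well-chosen smaller ball $B(\tilde x,s)\subset B(x,t)$ whose center $\tilde x$ is obtained by sliding $x$ slightly towards $a_1$. On such a ball two things should hold simultaneously: the values of $f$ are controlled from below by $M f(x)$ (using that $f$ is almost radial non-increasing), and the Lebesgue measure of its intersection with $A$ is controlled from below by $\mathfrak{p}(A,|\cdot-a_1|)V_d s^d$ (using that $A$ is peakless, i.e. satisfies $(\ref{peakless})$). Combining these two estimates then yields the claimed inequality; in the trivial case $f(x)=0$ there is nothing to prove.

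Concretely, write $\rho:=|x-a_1|$, which is positive since $0<t<|x-a_1|$, and $R:=\|x-a_1\|_0$; the norm equivalence $\tfrac1{C_0}\|\cdot\|_0\le|\cdot|\le C_0\|\cdot\|_0$ gives $R\ge\rho/C_0$. For $t\in(0,\rho)$ I would take
\[
s:=\frac{t}{C_0^2+1},\qquad \tilde x:=a_1+\tau(x-a_1)\ \ \text{with}\ \ \tau:=1-\frac{C_0^2 t}{(C_0^2+1)\rho}\in(0,1),
\]
so that $|\tilde x-x|=\tfrac{C_0^2 t}{C_0^2+1}$ and $|\tilde x-a_1|=\rho-\tfrac{C_0^2 t}{C_0^2+1}$. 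Since $s+\tfrac{C_0^2 t}{C_0^2+1}=t<\rho$, the triangle inequality for $|\cdot|$ shows $B(\tilde x,s)\subset\overline{B(x,t)}$ and $0<s<|\tilde x-a_1|$; moreover $\tilde x\in A$ because $A$ is star-shaped with respect to $a_1$ (so $[a_1,x]\subset A$), and $a_1\notin B(\tilde x,s)$.

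The heart of the argument is the geometric claim that $\|y-a_1\|_0\le R$ for every $y\in B(\tilde x,s)$. Indeed $\|y-a_1\|_0\le C_0 s+\|\tilde x-a_1\|_0=\tfrac{C_0 t}{C_0^2+1}+\tau R$, and the remaining inequality $\tfrac{C_0 t}{C_0^2+1}\le(1-\tau)R$ becomes, after substituting $1-\tau=\tfrac{C_0^2 t}{(C_0^2+1)\rho}$, exactly $\rho/C_0\le R$, which holds. Hence, by the almost radial non-increasing property (equivalently, $f(y)\ge M f(x)$ whenever $x,y\in A\setminus\{a_1\}$ and $\|y-a_1\|_0\le\|x-a_1\|_0$), we get $f(y)\ge M f(x)$ for all $y\in B(\tilde x,s)\cap A$. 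Since $\tilde x\in A$ and $0<s<|\tilde x-a_1|$, the peakless property $(\ref{peakless})$ applied to the pair $(\tilde x,s)$ gives $\lambda_d\big(B(\tilde x,s)\cap A\big)\ge\mathfrak{p}(A,|\cdot-a_1|)\,V_d\,s^d$ (spheres are $\lambda_d$-null, so open versus closed balls is immaterial here). Therefore
\[
\nu(B(x,t))\ \ge\ \int_{B(\tilde x,s)\cap A}f\,d\lambda_d\ \ge\ M f(x)\,\lambda_d\big(B(\tilde x,s)\cap A\big)\ \ge\ M\,\mathfrak{p}(A,|\cdot-a_1|)\,V_d\,f(x)\,s^d,
\]
and since $C_0\ge1$ forces $C_0^2+1\le 2C_0^2$ we have $s^d=t^d/(C_0^2+1)^d\ge(2C_0^2)^{-d}t^d$, which is exactly the asserted lower bound.

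The one delicate point is the calibration of the shift against the radius. The displacement $|\tilde x-x|$ must be large enough that the entire ball $B(\tilde x,s)$ stays inside the $\|\cdot\|_0$-ball of radius $R$ about $a_1$ — this costs a factor $C_0^2$ because of the distortion between $|\cdot|$ and $\|\cdot\|_0$ — yet small enough that $B(\tilde x,s)$ still fits inside $B(x,t)$ and that $s<|\tilde x-a_1|$, so that the peakless inequality applies at $\tilde x$. Imposing $C_0^2 s=(1-\tau)\rho$ and $(1-\tau)\rho=t-s$ simultaneously forces the choice $s=t/(C_0^2+1)$, and it is exactly this balance that produces the constant $(2C_0^2)^{-d}$. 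Once the geometry is set up correctly, the rest is just the two triangle inequalities above, so I do not expect any further obstacle.
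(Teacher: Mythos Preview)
The paper does not actually prove this lemma; it only cites it as ``established in \cite{papiergreedy}'' and uses it as a black box. Your argument is correct and is the natural one: slide the center of the ball toward $a_1$ along the segment $[a_1,x]$ by an amount calibrated to the norm distortion $C_0$, so that the shifted ball $B(\tilde x,s)$ simultaneously (i) sits inside $B(x,t)$, (ii) sits inside the $\|\cdot\|_0$-ball of radius $\|x-a_1\|_0$ about $a_1$ (which yields $f\ge Mf(x)$ there via the almost-radial property), and (iii) has its radius strictly smaller than $|\tilde x-a_1|$ (so the peakless inequality applies at $\tilde x$). The choice $s=t/(C_0^2+1)$ that makes all three constraints tight is exactly what produces the constant $(2C_0^2)^{-d}$, since $C_0\ge1$ gives $C_0^2+1\le 2C_0^2$. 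There is no gap.
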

\begin{refproofhybrid}
We consider $\nu=f_a d\lambda_d$ for $a \in (0,1)$  where $$f_a=K_a\,f^a \quad \mbox{with} \quad K_a=\left(\int f^{a}d\lambda_d\right)^{-1}.$$ 
Note that $\int f^ad\lambda_d<+\infty$. In fact, if we denote 
$f^a=f^a(1+|x|)^b(1+|x|)^{-b}$ where $b=(1-a)(d+\ve)$, $\ve>0$, then, applying H\"older's inequality with the conjugate coefficients $\frac1a$ and $\frac{1}{1-a}$ yields
$$\int f^a(x) d\lambda_d(x)\leq \left( \int f(x) \, (1+|x|)^{\frac{1-a}{a}(d+\ve)}d\lambda_d(x)\right)^{a} \left( \int (1+|x|)^{-(d+\ve)}d\lambda_d(x)\right)^{1-a} $$
where the first factor is finite due to the moment assumption made on $P$ and the second factor is finite for $\ve>0$.\\
 
	Let $c \in (0,\tfrac12)$. Since $\frac{c}{c+1}<1$ and $a_1 \in a^{(n)}$ then, for every $x \in \R^d$, $\frac{c}{c+1}d(x,a^{(n)}) \leq d(x,a^{(n)}) \leq |x-a_1|$. Moreover, notice that $f_a$ is radial non-increasing with parameter $M^a$. So, merging $(\ref{micromacrogreedy})$ with Lemma $\ref{lem2}$, one obtains
	$$e_r(a^{(n)},P)^r - e_r(a^{(n+1)},P)^r 
	\geq \varphi_r\left(\frac{c}{c+1}\right)M^a \mathfrak{p}(A,|\cdot-a_1|)(2C_0^2)^{-d} V_d \int f_a(x) d(x,a^{(n)})^{d+r} dP(x).$$
	Now, denoting $C=\varphi_r\left(\frac{c}{c+1}\right)M^a \mathfrak{p}(A,|\cdot-a_1|)(2C_0^2)^{-d} V_d$ and having in mind that $dP=f.d\lambda_d$ and $dP_{\theta,\mu}=\theta^{d}f_{\theta,\mu}.d\lambda_d$, yields 
	\begin{align*}
	e_r(a^{(n)},P)^r - e_r(a^{(n+1)},P)^r &
	\geq C \theta ^{-d} \int_{\{f>0\}} f_a(x) f(x) f_{\theta,\mu}^{-1}(x)  d(x,a^{(n)})^{d+r} dP_{\theta,\mu}(x)\\
	& \geq C \theta ^{-d} K_{a}\int_{\{f>0\}} f(x)^{1+a} f_{\theta,\mu}^{-1}(x)  d(x,a^{(n)})^{d+r} dP_{\theta,\mu}(x).
	\end{align*}
	Applying the reverse H\"older inequality with the conjugate exponents $p= \frac{s}{d+r} \in (0,1)$ and $q=\frac{-s}{d+r-s}<0$ yields
	\begin{align*}
	e_r(a^{(n)},P)^r - e_r(a^{(n+1)},P)^r 
	& \geq  C \theta ^{-d} K_{a} \left(\int_{\{f>0\}} f(x)^{-|q|(1+a)} f_{\theta,\mu}^{|q|}(x) dP_{\theta,\mu}(x) \right)^{\frac1q} \left(\int_{\R^d}d(x,a^{(n)})^{s} dP_{\theta,\mu}(x) \right)^{\frac{d+r}{s}}\\
	& \geq  C \theta ^{-d+\frac dq} K_{a} \left(\int_{\{f>0\}} f(x)^{-|q|(1+a)} f_{\theta,\mu}^{1-q}(x) d\lambda_d(x) \right)^{\frac1q} e_s(a^{(n)},P_{\theta,\mu})^{d+r}.
	\end{align*}
	At this stage, we denote $C_1=C \theta ^{-d+\frac dq} K_{a} \left(\int_{\{f>0\}} f(x)^{-|q|(1+a)} f_{\theta,\mu}^{1-q}(x) d\lambda_d(x) \right)^{\frac1q}$, follow the same steps as in the proof of Theorem $\ref{thmgreedygeneral1}$ and use the result of Theorem $2.2.8$ in $\cite{papiergreedy}$ to obtain 
	\begin{align*}
	e_s(a^{(n)},P_{\theta,\mu})  \leq & \left(\frac{2}{C_1}\right)^{\frac{1}{d+r}} (n-2)^{-\frac{1}{d+r}} e_r\left(a^{ \left\lceil \frac{n}{2} \right\rceil },P\right)^{\frac{r}{d+r}} \\
	\leq & \, \frac{2^{1+\frac{1}{d}}C_0^2\,r^{\frac 1d}\,}{d^{\frac1d}M^{\frac1d}V_d^{\frac1d}\mathfrak{p}(A,|\cdot-a_1|)^{\frac1d}}\min_{\varepsilon \in (0,\frac13)}\big[\varphi_r(\varepsilon)^{-\frac{1}{d}}\big] \theta^{\frac ds} \left(\int_{\{f>0\}} f(x)^{-|q|(1+a)} f_{\theta,\mu}^{\frac{d+r}{d+r-s}}(x) d\lambda_d(x) \right)^{\frac{1}{|q|(d+r)}}\\
	& \times \|f\|_{\frac{d}{d+r}}^{\frac{1}{d+r}}\|f\|_a^{\frac{a}{d+r}}(n-2)^{-\frac{1}{d}}.
	\end{align*}
	The result is deduced using the same arguments as in the end of the proof of theorem $\ref{thmgreedygeneral1}$. 
	\hfill $\square$
\end{refproofhybrid}
\subsection{Example of distributions with finite polynomial moments up to a finite order}
Theorem $\ref{Pierce1}$ treats the case of a distribution $P$ that has finite polynomial moments at any order. However, this condition is not always satisfied. The goal of this example is to see what happens if the distribution $P$ has finite moments up to a finite order $r+\delta$ i.e when there exists a finite number $M$ such that $\E|X|^{r+\delta}<+\infty$ for $r+\delta<M$. For this, let us consider the hyper-Cauchy distribution $P=f.\lambda_d$ where 
$$f(x)=\frac{Cm}{(1+|x|^2)^m}$$
for a finite constant $C>0$ and $m>\frac d2$, this ensures the integrability of $f$ w.r.t. the Lebesgue measure $\lambda_d$. This probability distribution has finite moments of order $r+\delta<2m-d$, i.e. $\E|X|^{r+\delta}<+\infty$ if $r+\delta<2m-d$.
\\
In order to obtain Pierce type results, one proceeds as in the proof of Theorem \ref{Pierce1}.  Criterion $(\ref{criterenu})$ is verified with 
$$g_{\ve}(x)=\frac{K_{\delta,r}}{\big(1\vee (1+\ve)|x-a_1|\big)^{d\frac{r+\delta}{r+\delta-\eta}}}$$
and the reasoning is the same until inequality $(\ref{holderbigintegral})$.  At this stage, since $P$ does not have finite moments of any order, one wonders if the above inequality makes sense, i.e. if the integrals in the right side are finite. First, it is clear that 
\begin{equation}
\label{condint}
\int_{\{f>0\}}\left(\frac{f_{\theta,\mu}}{f}\right)^{(1-q)q'}fd\lambda_d =\int \left( \frac{1+|x|^2}{1+\theta^2 |x|^2}\right)^{m(1-q)q'}\frac{Cm}{(1+|x|^2)^m}d\lambda_d<+\infty
\end{equation}
where $1-q=\frac{d+r}{d+r-s}$ and $q',p'$ are two conjugate coefficients larger than $1$, 
since $\frac{1+|x|^2}{1+\theta^2 |x|^2}$ is bounded for $\theta>0$ and  $\frac{Cm}{(1+|x|^2)^m}\in L^1(\lambda_d)$ as mentioned previously.
Secondly, one notices that, since $\frac{r+\delta}{r+\delta-\eta}>1$, then $|q|p'd\frac{r+\delta}{r+\delta-\eta}=|q|dp'+\eta'$ for some $\eta'>0$. Hence, one can write
$$\int_{\R^d}g_{\ve}^{qp'}dP<+\infty \quad \Leftrightarrow \quad \int_{\R^d}\frac{ |x|^{|q|dp'+\eta'}}{(1+|x|^2)^m}d\lambda_d(x)<+\infty   \quad \Leftrightarrow \quad   \int_{0}^{+\infty}\frac{ |y|^{|q|dp'+\eta'+d-1}}{(1+|y|^2)^m}dy<+\infty.$$
This is equivalent to 
$$2m-\big(d|q|p'+\eta'+d-1\big)>1 \quad \Leftrightarrow \quad p'<\frac{1}{d|q|}(2m-d-\eta')<\frac{2m-d}{d|q|}.$$
At this stage, we note that one can choose $p'$ as close to $1$ as possible, since its conjugate $q'$ can be chosen as large as possible without affecting  $(\ref{condint})$. Hence, the above condition 
boils down to 
$d|q|<2m-d \quad \Leftrightarrow \quad \frac{s}{d+r-s}<\frac{2m-d}{d}$. Consequently, in order for this study to have sense, one must have 
$$s<\Big(1-\frac{d}{2m}\Big)(d+r)$$
which is more restrictive than the condition $s<d+r$ in the case of distributions with finite moments of any order.
 	\section{Upper estimates for $L^r$-optimal quantizers}
 	\label{optimaldilat}
Let $r,s>0$ and $(\Gamma^n)_{n \geq 1}$ a sequence of $L^r(\R^d)$-optimal quantizers of a random vector $X$ with probability distribution $P$. For every $\mu \in \R^d$ and $\theta>0$, we denote $\Gamma^n_{\theta,\mu}=\mu+\theta(\Gamma^n-\mu)=\{\mu+\theta(x_i-\mu), \; x_i\in \Gamma^n, \, 1\leq i \leq n\}$. \\

In $\cite{Sagna08}$, the $L^s$-optimality of the sequence $(\Gamma^{n}_{\theta,\mu})_{n \geq 1}$ was studied. The author provided some conditions for the $L^s$-rate optimality of this sequence depending on whether $\Gamma^n$ is an asymptotically $L^r$-optimal quantizer (study done for $s<r$) or exactly $L^r$-optimal (for $s<r+d$). This study was based on the integrability of the $b$-maximal functions associated to an $L^r$-optimal sequence of quantizers $(\Gamma^n)_{n \geq 1}$ defined by 
\begin{equation}
\label{psib}
\forall \xi \in \R^d, \quad \Psi_b(\xi)=\sup_{n \in \N}\frac{\lambda_d\left(B(\xi,b \, \mbox{dist}(\xi,\Gamma^n))\right)}{P\left(B(\xi,b \, \mbox{dist}(\xi,\Gamma^n))\right)}.
\end{equation}
Throughout this section, we focus on the case where $\Gamma^n$ is exactly $L^r$-optimal and  $0<s<r+d$ and extend the results established in \cite{Sagna08} to a larger class of distributions using tools that appeared meanwhile in $\cite{papiergreedy}$. Instead of maximal functions, our study relies on micro-macro inequalities using auxiliary probability distributions $\nu$ satisfying the following control on balls with respect to an $a_1 \in \Gamma^n$: for every $\varepsilon \in (0,1)$, there exists a Borel function $g_{\varepsilon}: \R^d \rightarrow (0,+\infty)$ such that, for every $x \in \mbox{supp}(\P)$ and every $t \in [0,\varepsilon |x-a_1|]$, 
\begin{equation}
\label{criterenuopt}
\nu(B(x,t)) \geq g_{\varepsilon}(x) V_d t^d.
\end{equation}
where $V_d$ denotes the volume of the hyper unit ball.  
\subsection{Main results}
The case where $r<s$ and $(\Gamma^n)_{n \geq 0}$ is a sequence of $L^r$-asymptotically optimal quantizers of $P$ has been studied in $\cite{Sagna08}$ without the use of maximal functions but requiring the couple $(\theta,\mu)$ to be $P$-admissible, i.e. such that 
$$\{f>0\} \subset \mu (1-\theta)+\theta\{f>0\}.$$
Note that if $\mbox{supp}(P)=\R^d$, then every couple $(\theta,\mu)$ is $P$-admissible. This condition is not needed to establish upper error bounds in this paper but will be considered in the studies for $s<r$ in Section \ref{examplesdilat}.
\begin{thm}
	\label{specificoptimal}
	Let $s\in [r,d+r)$ and $1-q=\frac{d+r}{d+r-s}$. Let $X$ be an $\R^d$-valued random vector with distribution $P=f.\lambda_d$ such that $\E|X|^{r+\delta}<+\infty$ for some $\delta>0$ such that $r+\delta>\frac{sd}{d+r-s}$. Let $\eta \in \big(0, r+\delta-\frac{sd}{d+r-s}\big)$, $p'=\frac{r+\delta-\eta}{d|q|}$ and $q'=\frac{r+\delta-\eta}{r+\delta-\eta-d|q|}$ and  let $(\Gamma^n)_{n \geq 1}$ be a sequence of $L^r(\R^d)$-optimal quantizers of $X$. Assume 
$$\int_{\{f>0\}}\left(\frac{f_{\theta,\mu}}{f}\right)^{(1-q)q'}f d\lambda_d< +\infty.$$
Then, for every $n \geq 1$, \\
$$e_s(\Gamma^n_{\theta,\mu},P)  \leq  \widetilde \kappa^{\text{Optimal}}_{\theta,\mu} \theta^{1+\frac {d}{s}} \sigma_{r+\delta}(P)\left( \int_{\{f>0\}}\left(\frac{f_{\theta,\mu}}{f}\right)^{(1-q)q'}fd\lambda_d \right)^{\frac{1}{|q|q'(d+r)}}n^{-\frac1d}$$
	where $\sigma_{r+\delta}(P)=\inf_{a}\|X-a\|_{r+\delta}$ is the $L^{r+\delta}$-standard deviation of $P$ and
	\begin{align*}
	\widetilde{\kappa}^{\text{Optimal}}_{\theta,\mu} &=2^{\frac{2qp'-1}{qp'(d+r)}}\left(\frac{(2^r-1)C_1^r+2^rC_2^r}{V_d} \right)^{\frac{1}{d+r}} \,  \min_{\varepsilon \in (0,\tfrac13)}\Big[(1+\ve)\varphi_r(\varepsilon)^{-\frac{1}{d+r}}\Big]
	\Big( \int_{\R^d} \left(1 \vee |x|\right)^{-d\frac{r+\delta}{r+\delta-\eta}} dx \Big)^{\frac{1}{d+r}}
	 \end{align*}
	  with $C_1$ and $C_2$ are finite constants not depending on $n, \theta$ and $\mu$ and $\varphi_r:u \rightarrow \left(\frac{1}{3^r}-u^r\right)u^d, \; u \in (0,\tfrac13)$.
\end{thm}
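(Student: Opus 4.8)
The plan is to transpose the proof of Theorem~\ref{Pierce1} to the setting of $L^r$-optimal quantizers, replacing the two greedy-specific ingredients — the monotonicity of $\big(e_r(a^{(n)},P)\big)_n$ and the greedy rate-optimality result of Theorem~\ref{greedyrateoptimal} — by their optimal-quantizer counterparts, namely the micro-macro lower bound~(\ref{micromacrooptimal}) and the increment upper bound~(\ref{increment}), the latter combined with the $L^r$-rate optimality of $(\Gamma^n)$ provided by Theorem~\ref{Zadoretpierce} (applicable since $X\in L^{r+\delta}(\P)$). As a first step, exactly as in~(\ref{linkthetamugreedy}), the change of variable $x=\theta^{-1}(z-\mu)+\mu$ gives $e_s(\Gamma^n_{\theta,\mu},P)^s=\theta^s\,e_s(\Gamma^n,P_{\theta,\mu})^s$, so it suffices to estimate $e_s(\Gamma^n,P_{\theta,\mu})$. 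I would then take the auxiliary probability density $\gamma_{r,\delta}(x)=K_{\delta,r}\,\big(1\vee|x-a_1|\big)^{-d\frac{r+\delta}{r+\delta-\eta}}$ with $K_{\delta,r}=\big(\int(1\vee|x|)^{-d\frac{r+\delta}{r+\delta-\eta}}dx\big)^{-1}<+\infty$, and check as in the proof of Theorem~\ref{Pierce1} that $\nu=\gamma_{r,\delta}.\lambda_d$ fulfils~(\ref{criterenuopt}) with companion function $g_\varepsilon(x)=K_{\delta,r}\,\big(1\vee(1+\varepsilon)|x-a_1|\big)^{-d\frac{r+\delta}{r+\delta-\eta}}$.

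The heart of the argument is to sandwich the increment $e_r(\Gamma^n,P)^r-e_r(\Gamma^{n+1},P)^r$. For the lower bound, pick $c\in\big(0,\tfrac{\varepsilon}{1-\varepsilon}\big]\cap\big(0,\tfrac12\big)$ so that $\tfrac{c}{c+1}\le\varepsilon$; using $d(x,\Gamma^n)\le|x-a_1|$ for some $a_1\in\Gamma^n$, the criterion~(\ref{criterenuopt}), the identity $dP=\theta^{-d}\tfrac{f}{f_{\theta,\mu}}\,dP_{\theta,\mu}$ on $\{f>0\}$ and the reverse Hölder inequality with exponents $p=\tfrac{s}{d+r}\in(0,1)$ and $q=\tfrac{-s}{d+r-s}$, one obtains (following the derivation of~(\ref{equ1}) and the reverse-Hölder step in the proof of Theorem~\ref{thmgreedygeneral1})
\begin{equation*}
e_r(\Gamma^n,P)^r-e_r(\Gamma^{n+1},P)^r\ \ge\ V_d\,\varphi_r\!\Big(\tfrac{c}{c+1}\Big)\,\theta^{-d}\Big(\int_{\{f>0\}}\big(\tfrac{f_{\theta,\mu}}{f\,g_\varepsilon}\big)^{|q|}\,dP_{\theta,\mu}\Big)^{1/q}\,e_s(\Gamma^n,P_{\theta,\mu})^{d+r}.
\end{equation*}
For the upper bound, inequality~(\ref{increment}) together with the rate optimality $e_r(\Gamma^{n+1},P)\le C_1(n+1)^{-1/d}$ and $n^{-r/d}\le 2^{r/d}(n+1)^{-r/d}$ gives $e_r(\Gamma^n,P)^r-e_r(\Gamma^{n+1},P)^r\le 4\big((2^r-1)C_1^r+2^{r+r/d}C_2^r\big)(n+1)^{-\frac{d+r}{d}}$. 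Comparing the two bounds, taking the $(d+r)$-th root and using $(n+1)^{-1/d}\le n^{-1/d}$ yields a bound on $e_s(\Gamma^n,P_{\theta,\mu})$ of order $n^{-1/d}$, carrying the factor $\big(\int_{\{f>0\}}(\tfrac{f_{\theta,\mu}}{f\,g_\varepsilon})^{|q|}dP_{\theta,\mu}\big)^{\frac{1}{|q|(d+r)}}$ together with $V_d^{-1/(d+r)}\varphi_r(\tfrac{c}{c+1})^{-1/(d+r)}$ and the constants built from $C_1,C_2$.

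It then remains to process the integral factor exactly as in the proof of Theorem~\ref{Pierce1}: rewrite it as $\theta^{\frac{d}{|q|(d+r)}}\big(\int_{\{f>0\}}g_\varepsilon^{q}(\tfrac{f_{\theta,\mu}}{f})^{1-q}dP\big)^{\frac{1}{|q|(d+r)}}$, apply Hölder with the conjugate exponents $p'=\tfrac{r+\delta-\eta}{d|q|}$ — which exceeds $1$ precisely because $r+\delta>\tfrac{sd}{d+r-s}$ — and $q'=\tfrac{r+\delta-\eta}{r+\delta-\eta-d|q|}$, which splits off the finite factor $\big(\int_{\{f>0\}}(\tfrac{f_{\theta,\mu}}{f})^{(1-q)q'}f\,d\lambda_d\big)^{\frac{1}{|q|q'(d+r)}}$ and leaves $\big(\int g_\varepsilon^{qp'}dP\big)^{\frac{1}{p'|q|(d+r)}}$; since $|q|p'd=r+\delta-\eta$, the exponents collapse and the latter becomes $K_{\delta,r}^{-1/(d+r)}\big(\int(1\vee(1+\varepsilon)|x-a_1|)^{r+\delta}dP\big)^{\frac{1}{p'|q|(d+r)}}$, which by the $L^{r+\delta}$-Minkowski inequality is at most $K_{\delta,r}^{-1/(d+r)}\big(1+(1+\varepsilon)\sigma_{r+\delta}(P)\big)^{\frac{r+\delta}{p'|q|(d+r)}}$. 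The equivariance argument of Theorem~\ref{Pierce1} ($X\mapsto\lambda(X-a_1)+a_1$, then $\lambda=\tfrac1{(1+\varepsilon)\sigma_{r+\delta}(P)}$) turns this dependence linear in $\sigma_{r+\delta}(P)$ — the prefactor $\tfrac1\lambda=(1+\varepsilon)\sigma_{r+\delta}(P)$ accounting for the explicit $\sigma_{r+\delta}(P)$ and the $(1+\varepsilon)$ being absorbed — and minimizing over $\varepsilon\in(0,\tfrac13)$ produces the factor $\min_{\varepsilon}\big[(1+\varepsilon)\varphi_r(\varepsilon)^{-1/(d+r)}\big]$, while $K_{\delta,r}^{-1/(d+r)}$ supplies the $\big(\int(1\vee|x|)^{-d(r+\delta)/(r+\delta-\eta)}dx\big)^{1/(d+r)}$ in $\widetilde\kappa^{\text{Optimal}}_{\theta,\mu}$. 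Collecting the powers of $\theta$ — $\theta^1$ from the change of variable, $\theta^{d/(d+r)}$ from $\theta^{-d}$ raised to $1/(d+r)$, and $\theta^{d/(|q|(d+r))}$ from the integral factor — gives $\theta^{\,1+\frac{d}{d+r}(1+1/|q|)}=\theta^{\,1+\frac{d}{s}}$, matching the statement; the various powers of $2$ assemble into the prefactor $2^{\frac{2qp'-1}{qp'(d+r)}}$.

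I expect the main obstacle to lie in the clause ``with respect to an $a_1\in\Gamma^n$'': since a sequence of $L^r$-optimal quantizers need not be nested, there is in general no common point of all the $\Gamma^n$, so a companion function $g_\varepsilon$ independent of $n$ cannot be used naively in the micro-macro step. One therefore has to either fix $a_1\in\mbox{supp}(P)$ once and for all and control $d(x,\Gamma^n)$ in terms of $1\vee|x-a_1|$ uniformly in $n$ — using that $e_r(\Gamma^n,P)\le\sigma_r(P)$ for every $n$ forces $\sup_n d(a_1,\Gamma^n)<+\infty$, which is exactly what the truncation $1\vee(\cdot)$ in $g_\varepsilon$ is designed to absorb — or let $a_1=a_1^{(n)}\in\Gamma^n$ depend on $n$ and verify that, after the equivariance rescaling, every $n$-dependent quantity is dominated by the $a_1$-free quantity $\sigma_{r+\delta}(P)$. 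Securing this uniformity while keeping $C_1$ and $C_2$ genuinely independent of $n,\theta,\mu$ is the one genuinely new point; the rest is the Hölder-exponent bookkeeping already carried out for Theorem~\ref{Pierce1}.
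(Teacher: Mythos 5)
Your proposal follows essentially the same route as the paper: the paper first establishes a general bound (Theorem \ref{upperoptimal}) by combining the change of variables $e_s(\Gamma^n_{\theta,\mu},P)=\theta\,e_s(\Gamma^n,P_{\theta,\mu})$, the micro-macro inequality (\ref{micromacrooptimal}) under criterion (\ref{criterenuopt}) with the reverse H\"older step, and the increment bound (\ref{increment}) together with the $L^r$-rate optimality of $(\Gamma^n)$, and then specializes exactly as you do — same $\nu$ and $g_{\varepsilon}$, same H\"older split with $p',q'$, same Minkowski bound and equivariance with $\lambda=\tfrac{1}{(1+\varepsilon)\sigma_{r+\delta}(P)}$, same minimization over $\varepsilon$. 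The only point of divergence is the issue you flag about $a_1\in\Gamma^n$: the paper does not address it at all (it simply takes some $a_1\in\Gamma^n$ for each $n$ and writes $\sigma_{r+\delta}(P)$ after the Minkowski step), so your discussion there is extra care beyond the source rather than a different argument.
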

\begin{rmq}
One checks that $\varphi_r$ attains its maximum at $\frac 13 \left( \frac{d}{d+r}\right)^{\frac 1r}$ on $(0,\tfrac13)$. 
\end{rmq}
Note that, like for greedy quantizeration sequences, the case $s<r$ can be easily treated by remarking that $e_s(\Gamma_{\theta,\mu}^n,P) \leq e_r(\Gamma^n_{\theta,\mu},P)$ which is upper bounded in Theorem \ref{specificoptimal}.
 	\subsection{Proof}
 	We start with a general theoretical result based on the auxiliary distribution $\nu$ and its companion function $g_{\ve}$ satisfying $(\ref{criterenuopt})$.
\begin{thm}
\label{upperoptimal}
	Let $s\in (0,d+r)$ and $1-q=\frac{d+r}{d+r-s}$. Let $X$ be an $\R^d$-valued random vector with distribution $P=f.\lambda_d$ such that $\E|X|^{r+\delta}<+\infty$ for some $\delta>0$ and let $(\Gamma^n)_{n \geq 1}$ be a sequence of $L^r(\R^d)$-optimal quantizers of $X$ such that $\Gamma^n=\{x_1,\ldots,x_n\}$. Assume there exist a distribution $\nu$ and a function $g_{\ve}$ satisfying $(\ref{criterenuopt})$, for $\ve\in (0,\tfrac13)$, such that
$$ \int_{\{f>0\}} \left( \frac{f_{\theta,\mu}}{f \, g_{\ve}}\right)^{|q|} dP_{\theta,\mu}< +\infty.$$
Then, for every $n \geq 1$, \\
$$e_s(\Gamma^n_{\theta,\mu},P)  \leq  \kappa^{\text{Optimal}}_{\theta,\mu} \theta^{1+\frac {d}{d+r}} \left(\int_{\{f>0\}}  \left( \frac{f_{\theta,\mu}}{f \, g_{\ve}}\right)^{|q|}(x) dP_{\theta,\mu}(x) \right)^{\frac{1}{|q|(d+r)}} n^{-\frac1d}$$
where $\kappa^{\text{Optimal}}_{\theta,\mu}=\Big(4(2^r-1)C_1^r+4.2^rC_2^r \Big)^{\frac{1}{d+r}} V_d^{-\frac{1}{d+r}} \,  \min_{\varepsilon \in (0,\tfrac13)}\big[\varphi_r(\varepsilon)^{-\frac{1}{d+r}}\big]$ with $C_1$ and $C_2$ finite constants not depending on $n, \theta$ and $\mu$ and $\varphi_r:u \rightarrow \left(\frac{1}{3^r}-u^r\right)u^d, \; u \in (0,\tfrac13)$.
\end{thm}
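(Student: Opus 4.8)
\textbf{Proof proposal for Theorem~\ref{upperoptimal}.}
The plan is to run the same scheme as in the proof of Theorem~\ref{thmgreedygeneral1}, but with the greedy telescoping argument replaced by the mismatch increment inequality~(\ref{increment}), which is available here precisely because the $\Gamma^n$ are \emph{exactly} $L^r$-optimal. First I would perform the change of variable $x=\frac{z-\mu}{\theta}+\mu$ which, exactly as in~(\ref{linkthetamugreedy}), gives
$$e_s(\Gamma^n_{\theta,\mu},P)^s=\theta^s\,e_s(\Gamma^n,P_{\theta,\mu})^s,$$
so that everything reduces to estimating $e_s(\Gamma^n,P_{\theta,\mu})$, the extra factor $\theta^1$ in the announced bound coming from this identity at the very end.

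The second step is a \emph{lower} bound for the increment $e_r(\Gamma^n,P)^r-e_r(\Gamma^{n+1},P)^r$ in terms of $e_s(\Gamma^n,P_{\theta,\mu})$. I would start from the optimal-quantizer micro--macro inequality~(\ref{micromacrooptimal}), choose $c\in(0,\tfrac{\varepsilon}{1-\varepsilon}]\cap(0,\tfrac12)$ so that $\tfrac{c}{c+1}\,d(x,\Gamma^n)\le\varepsilon\,|x-a_1|$ (using $a_1\in\Gamma^n$, hence $d(x,\Gamma^n)\le|x-a_1|$), plug in the control-on-balls property~(\ref{criterenuopt}) and use $\tfrac{(1-c)^r-c^r}{(c+1)^r}\ge\tfrac1{3^r}-\big(\tfrac{c}{c+1}\big)^r$ to get
$$e_r(\Gamma^n,P)^r-e_r(\Gamma^{n+1},P)^r\ \ge\ V_d\,\varphi_r\!\Big(\tfrac{c}{c+1}\Big)\int g_\varepsilon(x)\,d(x,\Gamma^n)^{d+r}\,dP(x).$$
Then, writing $dP=\theta^{-d}f\,f_{\theta,\mu}^{-1}\,dP_{\theta,\mu}$ and applying the reverse H\"older inequality with exponents $p=\frac{s}{d+r}\in(0,1)$ and its conjugate $q=\frac{-s}{d+r-s}<0$ exactly as in Theorem~\ref{thmgreedygeneral1} (splitting the integrand as $d(x,\Gamma^n)^{d+r}\times g_\varepsilon f f_{\theta,\mu}^{-1}$, the second factor producing the power-$|q|$ term since $q<0$), I would arrive at $e_r(\Gamma^n,P)^r-e_r(\Gamma^{n+1},P)^r\ge\widetilde C\,e_s(\Gamma^n,P_{\theta,\mu})^{d+r}$ with $\widetilde C=V_d\,\varphi_r(\tfrac{c}{c+1})\,\theta^{-d}\big(\int_{\{f>0\}}(f_{\theta,\mu}/(f g_\varepsilon))^{|q|}dP_{\theta,\mu}\big)^{-1/|q|}$, a genuine positive constant by the standing integrability hypothesis.

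The last step, the novelty compared with the greedy case, is to feed in an \emph{upper} bound for the same increment. One cannot telescope because $e_s(\Gamma^n,P_{\theta,\mu})$ need not be monotone in $n$ ($\Gamma^n$ is $L^r$-optimal for $P$, not $L^s$-optimal for $P_{\theta,\mu}$); instead I would combine~(\ref{increment}) with the non-asymptotic Pierce estimate $e_r(\Gamma^{n+1},P)^r\le C_1^r(n+1)^{-r/d}$ from Theorem~\ref{Zadoretpierce}(b) (with $\eta=\delta$, so $C_1=\kappa_{d,r,\delta}\,\sigma_{r+\delta}(X)$ is intrinsic to $P$, independent of $n,\theta,\mu$), which gives
$$e_r(\Gamma^n,P)^r-e_r(\Gamma^{n+1},P)^r\ \le\ \frac{4(2^r-1)C_1^r+4\cdot2^rC_2^r}{n+1}\,n^{-r/d}\ \le\ \big(4(2^r-1)C_1^r+4\cdot2^rC_2^r\big)\,n^{-\frac{d+r}{d}}.$$
Combining with the lower bound, solving for $e_s(\Gamma^n,P_{\theta,\mu})$, substituting $\widetilde C$, specializing $c=\frac{\varepsilon}{1-\varepsilon}$ (so $\frac{c}{c+1}=\varepsilon$) and optimizing the $\varphi_r$-factor over $\varepsilon\in(0,\tfrac13)$ as at the end of the proof of Theorem~\ref{thmgreedygeneral1}, and finally multiplying by $\theta$, yields the announced estimate with $\kappa^{\text{Optimal}}_{\theta,\mu}$ as stated.

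I expect the delicate point to be bookkeeping rather than any deep difficulty: one must ensure that $C_1,C_2$ in $\kappa^{\text{Optimal}}_{\theta,\mu}$ are truly independent of $(\theta,\mu)$, which is why the \emph{upper} bound on the increment is applied to $e_r(\cdot,P)$ — to which~(\ref{increment}) and Pierce's Lemma apply with $P$-intrinsic constants — and never to anything built from $P_{\theta,\mu}$, all the $(\theta,\mu)$-dependence being confined to $\widetilde C$, i.e. to the explicit factor $\theta^{1+d/(d+r)}$ and the integral $\int_{\{f>0\}}(f_{\theta,\mu}/(f g_\varepsilon))^{|q|}dP_{\theta,\mu}$. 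A secondary point needing care is the domain of integration in the reverse H\"older step, where the part of $\mathrm{supp}(P_{\theta,\mu})$ lying outside $\{f>0\}$ must be controlled; this is handled exactly as in the proof of Theorem~\ref{thmgreedygeneral1}.
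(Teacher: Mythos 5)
Your proposal follows essentially the same route as the paper's proof: the scaling identity $e_s(\Gamma^n_{\theta,\mu},P)=\theta\,e_s(\Gamma^n,P_{\theta,\mu})$, the micro--macro inequality (\ref{micromacrooptimal}) combined with (\ref{criterenuopt}) and reverse H\"older to get the lower bound $e_r(\Gamma^n,P)^r-e_r(\Gamma^{n+1},P)^r\ge \widetilde C\,e_s(\Gamma^n,P_{\theta,\mu})^{d+r}$, then the increment upper bound (\ref{increment}) together with the $L^r$-rate optimality of $(\Gamma^n)$ (which you justify, slightly more explicitly than the paper, via Pierce's Lemma) and the optimization over $\varepsilon$. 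The argument and the resulting constant match the paper's; no gap.
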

\begin{proof}
	First, as in the proof of Theorem $\ref{thmgreedygeneral1}$, we have for every $n\geq 1$,\begin{align}
	\label{linkthetamu}
	e_s(\Gamma^n_{\theta,\mu},P)^s= \theta^s e_s(\Gamma^n,P_{\theta,\mu})^s.
	\end{align} 
	Then, assume that $c \in (0 , \frac{\varepsilon}{1-\varepsilon}] \cap (0,\frac12)$ so that $\frac{c}{c+1} \leq \varepsilon$. Moreover, $d(x,\Gamma^n)\leq |x-a_1|$ for an $a_1 \in \Gamma^n$. So, $\frac{c}{c+1}d(x,\Gamma^n) \leq \varepsilon |x-a_1|$ and, hence, $\nu$ satisfies $(\ref{criterenuopt})$ w.r.t. $a_1$. Consequently, there exists a Borel function $g_{\varepsilon}:\R^d \rightarrow (0,+\infty)$ such that 
	$$\nu \left(B\left(x, \frac{c}{c+1} \,d\big(x,\Gamma^n\big)  \right) \right) \geq V_d \, \left(\frac{c}{c+1} \right)^{d} d(x,\Gamma^n)^d\, g_{\varepsilon}(x).$$
	Then, noticing that $ \frac{(1-c)^r-c^r}{(1+c)^r} \geq \frac{1}{3^r} -\big( \frac{c}{c+1}\big)^r \; >0$ in $(\ref{micromacrooptimal})$, since $c \in (0,\tfrac 12)$, yields
	\begin{equation*}
	e_r(\Gamma^n,P)^r - e_r(\Gamma^{n+1},P)^r  
	\geq V_d \, \varphi_r\left(\frac{c}{c+1} \right) \int g_{\varepsilon}(x) d(x,\Gamma^n)^{d+r} dP(x)
	\end{equation*}  
	where $\ds \varphi_r(u)=\left(\frac{1}{3^r}-u^r\right)u^d, \; u \in (0,\tfrac13)$. This inequality is the version of $(\ref{equ1})$ for optimal quantizers so we follow the same steps as in the proof of Theorem \ref{thmgreedygeneral1} until we obtain
	\begin{equation*}
	e_r(\Gamma^n,P)^r - e_r(\Gamma^{n+1},P)^r  \geq C e_s(\Gamma^n,P_{\theta,\mu})^{d+r}
	\end{equation*}
	where $\ds C=V_d \, \varphi_r\left(\frac{c}{c+1} \right) \theta^{-d} \left(\int_{\{f>0\}}  \left( \frac{f_{\theta,\mu}}{f \, g_{\ve}}\right)^{|q|} dP_{\theta,\mu}(x) \right)^{\frac1q}.$ 	At this stage, since $(\Gamma^n)_{n\geq 1}$ is a sequence of $L^r$-optimal quantizers, we use $(\ref{increment})$ to obtain the following upper bound
\begin{align}
	\label{avecphi}
	e_s(\Gamma^n,P_{\theta,\mu})  \leq & C^{-\frac{1}{d+r}} \left( \frac{4(2^r-1)e_r(\Gamma^{n+1},P)^r}{n+1}+\frac{4.2^rC_2^r n^{-\frac rd}}{n+1}\right)^{\frac{1}{d+r}} \nonumber \\
	\leq & \left( \frac{4(2^r-1)e_r(\Gamma^{n+1},P)^r}{n+1}+\frac{4.2^rC_2^r n^{-\frac rd}}{n+1}\right)^{\frac{1}{d+r}}V_d^{-\frac{1}{d+r}} \, \varphi_r\left(\frac{c}{c+1} \right)^{-\frac{1}{d+r}} \theta^{\frac {d}{d+r}} \nonumber \\
	&\times \left(\int_{\{f>0\}}  \left( \frac{f_{\theta,\mu}}{f \, g_{\ve}}\right)^{|q|} dP_{\theta,\mu}(x) \right)^{\frac{1}{|q|(d+r)}}\nonumber \\
	\leq &\, \Big(4(2^r-1)C_1^r+4.2^rC_2^r \Big)^{\frac{1}{d+r}} n^{-\frac1d} V_d^{-\frac{1}{d+r}} \, \varphi_r\left(\frac{c}{c+1} \right)^{-\frac{1}{d+r}} \theta^{\frac {d}{d+r}}\nonumber \\
	& \times \left(\int_{\{f>0\}}  \left( \frac{f_{\theta,\mu}}{f \, g_{\ve}}\right)^{|q|} dP_{\theta,\mu}(x) \right)^{\frac{1}{|q|(d+r)}}
	\end{align}
	where we used, in the last inequality, the definition of an $L^r$-optimal quantizer given by $(\ref{defrateoptimal})$.
	Now, we use $(\ref{varphimin})$ to obtain 
	\begin{align*}
	e_s(\Gamma^n,P_{\theta,\mu})  \leq &\,  \Big(4(2^r-1)C_1^r+4.2^rC_2^r \Big)^{\frac{1}{d+r}} n^{-\frac1d}V_d^{-\frac{1}{d+r}} \,  \theta^{\frac {d}{d+r}}\, \min_{\varepsilon \in (0,\tfrac13)}\big[\varphi_r(\varepsilon)^{-\frac{1}{d+r}}\big]\\
	& \times \left(\int_{\{f>0\}}  \left( \frac{f_{\theta,\mu}}{f \, g_{\ve}}\right)^{|q|} dP_{\theta,\mu}(x) \right)^{\frac{1}{|q|(d+r)}}.
	\end{align*}
	Finally, one deduces the result by injecting this last inequality in $(\ref{linkthetamu})$.
	\hfill $\square$\\
\end{proof}
 	By specifying the function $g_{\varepsilon}$ in Theorem $\ref{upperoptimal}$, we obtain  a universal non asymptotic bound for the error $e_s(\Gamma^n_{\theta,\mu},P)$ given in Theorem \ref{specificoptimal} which proof is the following.\medskip\\
\begin{refproofoptspecific}
	We consider $\nu(dx)=\gamma_{r,\delta}(x) \lambda_d(dx)$ where
	$$\gamma_{r,\delta}(x)=\frac{K_{\delta,r}}{(1 \vee |x-a_1|)^{d\frac{r+\delta}{r+\delta-\eta}}}\;  \qquad \mbox{with} \qquad  K_{\delta,r}= \left(\int \frac{dx}{(1 \vee |x|)^{d\frac{r+\delta}{r+\delta-\eta}}} \right)^{-1} <+\infty$$
	is a probability density with respect to the Lebesgue measure on $\R^d$ and $|\cdot|$ denotes any norm on $\R^d$. Similarly as in the proof of Theorem $\ref{Pierce1}$, $(\ref{criterenuopt})$ is verified with $$g_{\varepsilon}(x)=\frac{K_{\delta,r}}{\big(1 \vee(1+\varepsilon)|x-a_1|\big)^{d\frac{r+\delta}{r+\delta-\eta}}}.$$
	So, we apply Theorem $\ref{upperoptimal}$  and use $(\ref{holderbigintegral})$ to obtain
	$$e_s(\Gamma^n_{\theta,\mu},P)\leq  \kappa^{\text{Optimal}}_{\theta,\mu} \theta^{1+\frac {d}{s}} K_{\delta,r}^{-\frac{1}{d+r}} \left(\int_{\R^d} g_{\ve}^{qp'}dP\right)^{\frac{1}{|q|p'(d+r)}} \left( \int_{\{f>0\}}\left(\frac{f_{\theta,\mu}}{f}\right)^{(1-q)q'}fd\lambda_d \right)^{\frac{1}{|q|q'(d+r)}}n^{-\frac1d}$$	
	where $q=\frac{-s}{d+r-s}$ so that $1-q=\frac{d+r}{d+r-s}$ and $p'$ and $q'$ are two conjugate coefficients larger than $1$.
	By our choice of $g_{\ve}$ and the $L^{r+\delta}$ Minkowski inequality, 
	\begin{align*}
	\left(\int_{\R^d}g_{\ve}^{qp'}dP\right)^{\frac{1}{|q|p'(d+r)}} & \leq K_{\delta,r}^{-\frac{1}{d+r}} \Big(1+(1+\ve)\sigma_{r+\delta}(P)\Big)^{\frac{1}{|q|p'(d+r)}}.
	\end{align*}
	where 
	$\sigma_{r+\delta}(P)=\inf_{a}\|X-a\|_{r+\delta}$ is the $L^{r+\delta}$-standard deviation of $P$. Consequently, one has
	\begin{align*}
	e_s(\Gamma^n_{\theta,\mu},P)  \leq  \kappa^{\text{Optimal}}_{\theta,\mu} \theta^{1+\frac {d}{s}} K_{\delta,r}^{-\frac{1}{d+r}} \Big(1+(1+\ve)\sigma_{r+\delta}(P)\Big)^{\frac{1}{|q|p'(d+r)}} \left( \int_{\{f>0\}}\left(\frac{f_{\theta,\mu}}{f}\right)^{(1-q)q'}fd\lambda_d \right)^{\frac{1}{|q|q'(d+r)}}n^{-\frac1d}
	\end{align*}
	Now, we introduce an equivariance argument. For $\lambda>0$, let $X_{\lambda}:=\lambda(X-a_1)+a_1$ and $(\alpha_{\lambda,n})_{n\geq 1}:=(\lambda(\alpha_n-a_1)+a_1)_{n \geq 1}$. It is clear that  $e_r(\alpha^{(n)},X)=\frac{1}{\lambda}e_r(\alpha_{\lambda}^{(n)},X_{\lambda})$. 
		Plugging this in the previous inequality yields
		\begin{align*}
	e_s(\Gamma^n_{\theta,\mu},P)  \leq  \frac{\kappa^{\text{Optimal}}_{\theta,\mu}}{K_{\delta,r}^{\frac{1}{d+r}}} \theta^{1+\frac {d}{s}} \frac{1}{\lambda}\Big(1+(1+\ve)\lambda \sigma_{r+\delta}(P)\Big)^{\frac{1}{|q|p'(d+r)}} \left( \int_{\{f>0\}}\left(\frac{f_{\theta,\mu}}{f}\right)^{(1-q)q'}fd\lambda_d \right)^{\frac{1}{|q|q'(d+r)}}n^{-\frac1d}
	\end{align*}
	Finally, one deduces the result by setting $\lambda=\frac{1}{(1+\ve)\sigma_{r+\delta}(P)}$.	
	\hfill $\square$
\end{refproofoptspecific}
 	\section{More examples and a dilatation optimization}
\label{examplesdilat}
Let $X$ be a random variable with distribution $P=f.\lambda_d$. The upper bounds established in Sections $\ref{greedydilat}$ and $\ref{optimaldilat}$, induce that the quantizers $\Gamma^n_{\theta,\mu}$ and $a^{(n)}_{\theta,\mu}$ are $L^s(P)$-rate optimal under one of the following necessary and sufficient conditions depending on the value of $s$, as follows \\
$\rhd$ If $s<r$ and  $(\theta,\mu)$ is $P$-admissible, then $a^{(n)}_{\theta,\mu}$ is  $L^s(P)$-rate optimal iff $P$ has finite moments of order $r+\delta$ for $\delta>0$ and
\begin{equation}
\label{cond1}
\int f^{-\frac{s}{r-s}}f_{\theta,\mu}^{\frac{r}{r-s}}d\lambda_d <+\infty.
\end{equation}
Note that it is the same condition for $\Gamma^n_{\theta,\mu}$ but this case is fully treated in $\cite{Sagna08}$.\\
$\rhd$ If $s<r+d$, then the $L^r$-dilated greedy sequence $a^{(n)}_{\theta,\mu}$ and the $L^r$-dilated optimal sequence $\Gamma^n_{\theta,\mu}$ are $L^s(P)$-rate optimal iff $P$ has finite moments of order $r+\delta$ for $\delta>0$ and
\begin{equation}
\label{cond2}
 \int_{\{f>0\}} \left( \frac{f_{\theta,\mu}}{f}\right)^{\frac{(d+r)(r+\delta-\eta)}{(d+r-s)(r+\delta-\eta)-ds}}fd\lambda_d <+\infty
\end{equation}
where $\eta \in \big(0, r+\delta-\frac{sd}{d+r-s}\big)$. 
In particular, when $f$ is a radial non-increasing density,  the $L^r$-dilated greedy sequence $a^{(n)}_{\theta,\mu}$ is $L^s(P)$-rate optimal iff $P$ has finite moments of order $\frac{1-a}{a}(d+\ve)$, $\ve>0$, and
\begin{equation}
\label{condrad}
\int f(x)^{\frac{-s(1+a)}{d+r-s}} f_{\theta,\mu}^{\frac{d+r}{d+r-s}}(x) d\lambda_d(x) <+\infty
\end{equation}
where $a\in (0,1)$.\\

This leads to determining the values of $(\theta,\mu)$ for which these conditions are satisfied and hence obtain an interval $I_P(\theta,\mu)$ of the parameters for which the $L^r$-dilated sequence is $L^s$-optimal. Let us denote, for the sake of simplicity, $\alpha^{(n)}_{\theta,\mu}$ both sequences $(\Gamma^n_{\theta,\mu})_{n \geq 1}$ and $(a^{(n)}_{\theta,\mu})_{n \geq 1}$. Generally, $\mu$ is chosen to be equal to $\E [X]$ in order to ensure that the distribution $P_{\theta,\mu}$ lies in the same family of distributions of $P$, and the values of $\theta$ for which the above conditions are satisfied depend entirely on the density $f$ of $P$. So, the problem is to determine the interval $I_P(\theta)$ depending on the distribution $P$. This way, based on $L^r$-optimal or greedy sequences $\alpha^{(n)}$, we obtain sequences $\alpha^{(n)}_{\theta,\mu}$ that are $L^s$-rate optimal, but not optimal nor even $L^s$-asymptotically optimal. We will carry out the study for specified families of distributions, like the multivariate Normal distribution $\mathcal{N}(m,\Sigma)$, the hyper-exponential, hyper-Gamma and hyper-Cauchy distributions. For each case, we determine the interval $I_P(\theta)$ and show that the dilated/contracted sequence does not satisfy the $L^s$-empirical measure theorem for every $\theta \in I_P(\theta)$. However, the computations established  allow us to determine, for some probability distributions, a particular value $\theta^*\in I_P(\theta)$ for which the sequence $\alpha_{\theta^*,\mu}^{(n)}$ satisfies the theorem. Let us first recall this theorem.
\begin{thm}[Empirical measure theorem]
	\label{empiricalmeasure}
	Let $P$ be a $L^r$-Zador distribution, absolutely continuous w.r.t the Lebesgue measure on $\R^d$ with density $f$. Let $\Gamma^n$ be an asymptotically optimal $n$-quantizer of $P$. Then, denoting $\ds C_{f,r}=\int_{\R^d} f^{\frac{d}{d+r}} d\lambda_d $, one has
	\begin{equation}
	\frac{1}{n} \sum_{x_i \in \Gamma^n} \delta_{x_i} \underset{n\rightarrow +\infty}{\Rightarrow} P_r=\frac{1}{C_{f,r}}\int f^{\frac{d}{r+d}}d\lambda_d,
	\end{equation}
	or, in other words, for every $a,b \in \R^d$, 
	$$\frac1n \,{\rm card}\, \big\{x_i \in \Gamma^n \cap [a,b] \big\} \rightarrow \frac{1}{C_{f,r}}\int_{[a,b]} f^{\frac{d}{r+d}}d\lambda_d.$$
\end{thm}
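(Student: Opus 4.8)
This statement is classical --- it goes back to Bucklew--Wise and was cast in its final form by Graf and Luschgy --- so the plan is to recall the argument, whose core is a variational characterisation of the limiting point configuration. Write $\mu_n:=\tfrac1n\sum_{x_i\in\Gamma^n}\delta_{x_i}$. Since $P_r\ll\lambda_d$, the topological boundary of the box $[a,b]$ is $\lambda_d$-negligible, hence $P_r$-negligible; so by the Portmanteau theorem $\tfrac1n\,{\rm card}\,\{x_i\in\Gamma^n\cap[a,b]\}=\mu_n([a,b])\to P_r([a,b])$ as soon as $\mu_n\Rightarrow P_r$, and the ``in other words'' reformulation is just the weak convergence tested on boxes. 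It therefore suffices to prove $\mu_n\Rightarrow P_r$, which I would do by showing $(\mu_n)$ is tight and that $P_r$ is its only weak limit point.

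\textbf{Tightness.} Because $P$ is $L^r$-Zador, $X\in L^{r+\eta}(\P)$ for some $\eta>0$, and one checks --- combining the asymptotic optimality of $\Gamma^n$ with the extended Pierce Lemma (Theorem~\ref{Zadoretpierce}$(b)$) --- that the $r$-th moments $\int|x|^r\,d\mu_n(x)=\tfrac1n\sum_i|x_i|^r$ stay bounded in $n$: if a positive proportion of the $x_i$ drifted off to infinity, deleting them would lower the quantization error asymptotically, contradicting optimality. A uniform bound on $\int|x|^r\,d\mu_n$ gives tightness, so each subsequence of $(\mu_n)$ has a further subsequence converging weakly to a probability measure $\mu$; it remains to see that $\mu=P_r$.

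\textbf{Variational lower bound and identification.} The crux is the estimate: for \emph{any} sequence $(\Lambda^n)$ of $n$-point sets whose empirical measures converge weakly to a probability $\mu$ with Lebesgue decomposition $\mu=h\cdot\lambda_d+\mu^{\perp}$,
\begin{equation}
\label{varlb}
\liminf_{n\to\infty}\ n^{r/d}\,e_r(\Lambda^n,X)^r\ \geq\ \widetilde J_{r,d}^{\,r}\int_{\{f>0\}}\frac{f}{h^{r/d}}\,d\lambda_d .
\end{equation}
I would derive $(\ref{varlb})$ by partitioning $\R^d$ into a grid of cubes $(C_j)_j$ of side $\delta$ with $\mu(\partial C_j)=0$, noting that ${\rm card}(\Lambda^n\cap C_j)\sim n\,\mu(C_j)$ (Portmanteau again) and that on each small cube $f$ is constant up to a factor $1+o_\delta(1)$, applying Zador's Theorem (Theorem~\ref{Zadoretpierce}$(a)$) to the renormalised restriction $P(\cdot\,|\,C_j)$ to get a local contribution $\gtrsim\widetilde J_{r,d}^{\,r}\,\mu(C_j)^{-r/d}\big(\int_{C_j}f^{d/(d+r)}d\lambda_d\big)^{(d+r)/d}$, then summing over $j$ and letting $\delta\to0$ so that the sum becomes the Riemann integral on the right of $(\ref{varlb})$. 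Minimising that integral over sub-probability densities $h$ with $\int h=c\leq1$, writing $h=c\widetilde h$ and using Hölder's inequality with conjugate exponents $\tfrac{d+r}{d}$ and $\tfrac{d+r}{r}$, gives $\int f\widetilde h^{-r/d}d\lambda_d\geq\big(\int f^{d/(d+r)}d\lambda_d\big)^{(d+r)/d}=C_{f,r}^{(d+r)/d}$, hence $\int f h^{-r/d}d\lambda_d\geq c^{-r/d}C_{f,r}^{(d+r)/d}\geq C_{f,r}^{(d+r)/d}$, with equality throughout if and only if $c=1$ (so $\mu^{\perp}=0$) and $\widetilde h=C_{f,r}^{-1}f^{d/(d+r)}$, i.e.\ $\mu=P_r$. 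On the other hand the asymptotic optimality of $\Gamma^n$ means exactly $n^{r/d}e_r(\Gamma^n,X)^r\to\widetilde J_{r,d}^{\,r}C_{f,r}^{(d+r)/d}$ (Zador's value); feeding $\Lambda^n=\Gamma^n$ into $(\ref{varlb})$ forces equality in the minimisation, so every weak limit point $\mu$ of $(\mu_n)$ is $P_r$. With tightness this yields $\mu_n\Rightarrow P_r$, and the cardinality statement follows as above.

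\textbf{Main obstacle.} The hard part is to make $(\ref{varlb})$ rigorous: the local Zador lower bound on a cube must hold uniformly in $j$, and one has to absorb (i) any mass $\mu$ places on the cube boundaries, (ii) cubes for which $n\,\mu(C_j)$ does not tend to $+\infty$, and (iii) the fact that the $\Lambda^n$-point nearest to an $x\in C_j$ need not lie in $C_j$. The standard remedy --- enlarged cubes, a boundary-layer estimate, and a preliminary truncation reducing to a bounded, compactly supported density --- is exactly where the $L^r$-Zador hypothesis on $P$ enters; the remaining convexity/Hölder computations are routine.
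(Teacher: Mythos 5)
The paper never proves Theorem \ref{empiricalmeasure}: it is recalled as a classical external result (the empirical measure theorem of Bucklew--Wise and Graf--Luschgy, see \cite{GraLu00}) and is only used in Section \ref{examplesdilat} to identify the limiting point density of the dilated quantizers $\Gamma^n_{\theta,\mu}$. So there is no in-paper argument to compare yours with; what you outline is the standard proof from the literature, and the overall strategy is the right one: tightness of the empirical measures, a localized Zador-type lower bound valid for an arbitrary limiting point density $h$, and identification of the minimizer via the equality case in H\"older's inequality, which forces $\mu^{\perp}=0$ and $h\propto f^{d/(d+r)}$.

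Two caveats on your sketch. First, the tightness argument is stated backwards: deleting points of $\Gamma^n$ can never lower the quantization error, so ``deleting them would lower the error'' cannot produce the contradiction. The correct argument is that if a proportion $\varepsilon>0$ of the points escapes every compact set along a subsequence, then the remaining $(1-\varepsilon)n$ points must carry essentially all the distortion on a compact set holding most of the mass of $P$, and the same localized lower bound then gives $\liminf_n n^{1/d}e_r(\Gamma^n,X)\geq(1-\varepsilon)^{-1/d}Q_r(P)>Q_r(P)$, contradicting asymptotic optimality; the uniform bound on $\int|x|^r\,d\mu_n$ you invoke is neither needed for tightness nor easy to justify. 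Second, as you yourself acknowledge, the whole technical content lies in the variational lower bound, which you assert rather than prove: the uniform local Zador estimate on small cubes, the cubes receiving $o(n)$ points, the mass on cube boundaries and the nearest neighbours lying outside the cube are precisely what makes the published proof nontrivial. As written, your text is a correct road map but not a complete proof; for the purposes of this paper, where the theorem is only quoted, citing \cite{GraLu00} is the appropriate substitute.
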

Moreover, for some distributions, the particular value $\theta^*$ mentioned above allows the lower bound $(\ref{lowerbound})$ induced by $\alpha_{\theta^*,\mu}^{(n)}$ to attain the sharp constant in Zador's theorem. This leads to wonder whether this sequence is $L^s$-asymptotically optimal.\\

Before proceeding with the particular studies, let us precise that, if $\theta>1$, the sequence $\alpha^{(n)}_{\theta,\mu}$ is called a dilatation of $\alpha^{(n)}$ with scaling parameter $\theta$ and translating number $\mu$. Likewise,  if $\theta<1$, the sequence $\alpha^{(n)}_{\theta,\mu}$ is called a contraction of $\alpha^{(n)}$ with scaling parameter $\theta$ and translating number $\mu$.
\subsection{The multivariate Gaussian distribution}
Let $P=\mathcal{N}(m, \Sigma)$. We consider $\mu=m$ so that the distribution $P_{\theta,\mu}$ lies in the same family of distributions as $P$. Since $\mbox{supp}(P)=\R^d$, then every couple $(\theta,\mu)$ is $P$-admissible.\\ 
$\rhd$ If $s<r$, the sequence $\alpha^n_{\theta,m}$ is $L^s$-rate optimal iff $\theta \in I_P(\theta)=\big(\sqrt{\frac sr}, +\infty \big)$. These computations are carried out in $\cite{Sagna08}$ for optimal quantizers and are the same for greedy quantizers.\\
$\rhd$ If $r \leq s <d+r$, we lead two studies, relying first on condition $(\ref{cond2})$ and then on condition $(\ref{condrad})$for radial densities and see what link we can make between both of them. Let us start with the general case, i.e. condition $(\ref{cond2})$. For $q=\frac{-s}{d+r-s}$ and every $q'>1$, one has 
$$\int _{\{f>0\}} \left(\frac{f_{\theta,m}}{f}\right)^{(1-q)q'}fd\lambda= \big((2\pi)^d|\Sigma|\big)^{-\frac12} \int e^{-\frac12\big((1-q)q'\theta^2+(q-1)q'+1\big)(x-m)^2|\Sigma|^{-2}}dx.$$
So, the sequence $\alpha_{\theta,m}^{(n)}$ is $L^s$-rate optimal iff
$$(1-q)q'\theta^2+(q-1)q'+1>0 \quad \Leftrightarrow \quad \theta^2> 1-\frac{1}{q'(1-q)}$$
and this for every $q'>1$. So, one can consider $q'$ as close to $1$ as possible and deduce that  $(\ref{cond2})$ is satisfied iff
$$I_P(\theta)=\Big(\sqrt{\frac{s}{d+r}},+\infty \Big).$$
Now, since the Normal distribution is a radial density distribution, it is interesting to see what the condition $(\ref{condrad})$ yields. For every $a\in (0,1)$, one has 
$$\int_{\{f>0\}} f^{q(1-a)}f_{\theta,m}^{1-q}d\lambda_d= \big((2\pi)^d|\Sigma| \big)^{-\frac12} \int e^{-\frac12\big(q(1+a)+\theta^2(1-q) \big) (x-m)^2|\Sigma|^{-2}}d\lambda_d.$$
So, $\alpha_{\theta,m}^{(n)}$ is $L^s$-rate optimal iff
$$(1-q)\theta^2+q(1+a)>0 \quad \Leftrightarrow \quad \theta^2>\frac{s}{d+r}(1+a)$$
and this for every $a\in(0,1)$. At this stage, note that the Normal distribution has finite $r$-th moment for every $r>0$ so the moment assumption made in Theorem $\ref{zadorpierce}$ allows us to choose $a$ as small as possible in a way that, even if $\frac{(1-a)(d+\ve)}{a}$ goes to infinity, we can still apply the theorem. Hence, one chooses $a \rightarrow 0^+$ and the condition made on $\theta$ reads $\theta^2>\frac{s}{d+r}$ and the interval $I_P(\theta)$ becomes 
 $$I_P(\theta)=\Big(\sqrt{\frac{s}{d+r}},+\infty \Big)$$
 coinciding with the interval deduced from condition $(\ref{cond2})$ as explained in Remark \ref{remsura}.
 \begin{rmq}
 One should note that choosing a scalar $\theta^*$ is optimal in the case of radial density probability distributions but, in the general case, it would be more precise if $\theta^*$ is a matrix.
 \end{rmq}
\paragraph{Empirical measure theorem}
This study relies on the fact that the $L^r$-quantizers themselves satisfy the $L^r$-empirical measure theorem so it is conducted only for $L^r$-dilated optimal quantizers $\Gamma^n_{\theta,m}$ since greedy quantizers do not satisfy this theorem. In order to conclude whether the sequence $\Gamma_{\theta,m}^{n}$ satisfies the empirical measure theorem, we start by determining the ``{\em limit measure}'' of the empirical measure, i.e. determine the limit of $\frac1n \,{\rm card}\, \big\{x_i \in \Gamma^n_{\theta,m} \cap [a,b] \big\}$. For every $n\geq 1$, it is clear that 
	$$\big\{x_i \in \Gamma^n_{\theta,m} \cap [a,b] \big\} = \Big\{x_i \in \Gamma^n \cap \Big[\frac{a}{\theta},\, \frac{b}{\theta}\Big] \Big\}.$$
	So, since $\Gamma^n$ satisfies the $L^r$-empirical measure theorem, 
	\begin{align*}
	\frac1n \mbox{card} \{x_i \in \Gamma^n_{\theta,m} \cap [a,b] \}& \rightarrow \frac{1}{C_{f,r}}\int_{\big[\frac{a}{\theta},\, \frac{b}{\theta}\big]} f^{\frac{d}{r+d}}d\lambda_d = \frac{1}{C_{f,r}}\theta^{-d}\int_{[a,b]} f\Big(\frac{x-m}{\theta}+m\Big)^{\frac{d}{r+d}}d\lambda_d
	\end{align*}
	where $C_{f,r}=\int_{\R^d} f^{\frac{d}{d+r}}d\lambda_d$. For every $\theta \in I_P(\theta)$, one has
	$$\int_{[a,b]}f\Big(\frac{x-m}{\theta}+m\Big)^{\frac{d}{r+d}}d\lambda_d= \big((2\pi)^d|\Sigma|\big)^{\frac{-d}{2(d+r)}}\int_{[a,b]} e^{-\frac12\frac{d}{d+r}\theta^{-2}(x-m)^2|\Sigma|^{-2}}d\lambda_d$$
	and
	$$\int_{\R^d} f^{\frac{d}{d+r}}d\lambda_d= \big((2\pi)^d|\Sigma|\big)^{\frac{r}{2(d+r)}}\left(\frac{d+r}{d}\right)^{\frac d2}.$$
	So, the limit of the empirical measure is given by
	\begin{align*}
	\frac1n \mbox{card} \{x_i \in \Gamma^n_{\theta,m} \cap [a,b] \}& \rightarrow \Big(\frac{d+r}{d\theta^2}\Big)^{\frac d2}  \big((2\pi)^d|\Sigma|\big)^{-\frac12+\frac12 \frac{d}{(d+r)\theta^2}} \int_{[a,b]} f^{\frac{d}{(d+r)\theta^2}}d\lambda_d\\
	& = \frac{1}{\int_{\R^d}f^{\frac{d}{(d+r)\theta^2}}d\lambda_d}\int_{[a,b]}f^{\frac{d}{(d+r)\theta^2}}d\lambda_d.
	\end{align*}
	
	With this limit, one clearly does not find the limit needed to satisfy the empirical measure theorem for every $\theta \in I_P(\theta)$. Instead, one can notice that it is possible for a particular value $\theta^*$ given by 
	$$\frac{d}{d+r}{\theta^*}^{-2}=\frac{d}{d+s} \qquad \Leftrightarrow \qquad \theta^*=\sqrt{\frac{d+s}{d+r}}.$$
	This leads to the following Proposition.
	\begin{prop}
	Let $r,s >0$ and $P=\mathcal{N}(m,\Sigma)$ be a multivariate Normal distribution. Assume $\Gamma^n$ is an asymptotically $L^r$-optimal quantizer of $P$. Consider
	$$\theta^*=\sqrt{\frac{d+s}{d+r}},$$ then the sequence $\Gamma^n_{\theta^*,m}$ satisfies the $L^s$-empirical measure theorem, i.e. 
	$$\frac1n\, {\rm card}\, \big\{x_i \in \Gamma^n_{\theta^*,m} \cap [a,b] \big\} \rightarrow \frac{1}{C_{f,s}}\int_{[a,b]} f^{\frac{d}{s+d}}d\lambda_d.$$
\end{prop}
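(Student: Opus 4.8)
The plan is to deduce the statement directly from the $L^r$-empirical measure theorem (Theorem~\ref{empiricalmeasure}) applied to the underlying sequence $\Gamma^n$, together with the affine change of variables built into the dilation $\Gamma^n_{\theta^*,m}=m+\theta^*(\Gamma^n-m)$ and the explicit form of the Gaussian density. The only genuinely computational input is the value of $\theta^*$, and it has already been isolated in the discussion preceding the statement.

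First I would record the set identity: for all $a,b\in\R^d$,
\[
\big\{x_i\in\Gamma^n_{\theta^*,m}\cap[a,b]\big\}=\Big\{y_i\in\Gamma^n\cap\Big[m+\tfrac{a-m}{\theta^*},\,m+\tfrac{b-m}{\theta^*}\Big]\Big\},
\]
so the two counting functions agree. Since $P=\mathcal{N}(m,\Sigma)$ is $L^r$-Zador and $\Gamma^n$ is asymptotically $L^r$-optimal, Theorem~\ref{empiricalmeasure} gives the weak convergence $\frac1n\sum_{y_i\in\Gamma^n}\delta_{y_i}\Rightarrow P_r=\frac{1}{C_{f,r}}f^{\frac{d}{d+r}}\lambda_d$; hence $\frac1n\sum_{x_i\in\Gamma^n_{\theta^*,m}}\delta_{x_i}$, being the image of $\frac1n\sum\delta_{y_i}$ under the homothety $z\mapsto m+\theta^*(z-m)$, converges weakly to the image of $P_r$ under that map. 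Evaluating on a rectangle $[a,b]$ and substituting $u=m+\frac{x-m}{\theta^*}$ in $\int f^{d/(d+r)}$, this reads
\[
\frac1n\,{\rm card}\,\big\{x_i\in\Gamma^n_{\theta^*,m}\cap[a,b]\big\}\;\xrightarrow[n\to+\infty]{}\;\frac{\theta^{*-d}}{C_{f,r}}\int_{[a,b]}f\!\Big(m+\tfrac{x-m}{\theta^*}\Big)^{\frac{d}{d+r}}d\lambda_d(x).
\]

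Next I would insert the Gaussian density: writing $q(x)=(x-m)^\top\Sigma^{-1}(x-m)$ one has $f(x)=\big((2\pi)^d|\Sigma|\big)^{-1/2}e^{-q(x)/2}$, so $f\big(m+\frac{x-m}{\theta^*}\big)^{\frac{d}{d+r}}$ equals a constant times $e^{-\frac{d}{2(d+r)\theta^{*2}}q(x)}$, while $f(x)^{\frac{d}{d+s}}$ equals a constant times $e^{-\frac{d}{2(d+s)}q(x)}$. The choice $\theta^*=\sqrt{\tfrac{d+s}{d+r}}$ is precisely the one for which $\tfrac{d}{(d+r)\theta^{*2}}=\tfrac{d}{d+s}$, so the limiting density above is proportional to $f^{\frac{d}{d+s}}$ (a finite measure, since a Gaussian raised to a positive power is integrable). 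It then remains to pin down the proportionality constant, which costs nothing: the limit of the probability measures $\frac1n\sum_{x_i\in\Gamma^n_{\theta^*,m}}\delta_{x_i}$ is itself a probability measure, so the constant must be $1/\!\int_{\R^d}f^{\frac{d}{d+s}}d\lambda_d=1/C_{f,s}$, giving
\[
\frac1n\,{\rm card}\,\big\{x_i\in\Gamma^n_{\theta^*,m}\cap[a,b]\big\}\;\xrightarrow[n\to+\infty]{}\;\frac{1}{C_{f,s}}\int_{[a,b]}f^{\frac{d}{s+d}}\,d\lambda_d,
\]
which is the $L^s$-empirical measure theorem for $\Gamma^n_{\theta^*,m}$. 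Equivalently, one may carry out the elementary Gaussian integrals $\int f^{d/(d+r)}=\big((2\pi)^d|\Sigma|\big)^{\frac{r}{2(d+r)}}\big(\tfrac{d+r}{d}\big)^{d/2}$, already used in the text, and check that the constants cancel.

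There is essentially no obstacle here: the whole content is the one-line observation $\theta^{*-2}\,\tfrac{d}{d+r}=\tfrac{d}{d+s}$ plus a normalization argument; the only care needed is consistent bookkeeping of the homothety in the change of variables. For completeness one may also note that $\theta^*$ lies in the admissibility interval $I_P(\theta)$ identified above in both regimes, since $\tfrac{d+s}{d+r}>\tfrac{s}{d+r}$ always and $\tfrac{d+s}{d+r}>\tfrac{s}{r}$ iff $r>s$.
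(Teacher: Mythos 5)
Your proposal is correct and follows essentially the same route as the paper: transfer the count on $[a,b]$ to $\Gamma^n$ via the homothety, invoke the $L^r$-empirical measure theorem for $\Gamma^n$, change variables, and observe that for the Gaussian density the resulting exponent $\frac{d}{(d+r)\theta^{*2}}$ equals $\frac{d}{d+s}$ exactly when $\theta^*=\sqrt{\tfrac{d+s}{d+r}}$. The only cosmetic difference is that you fix the normalizing constant by noting the weak limit is a probability measure, whereas the paper computes the Gaussian integrals explicitly — an equivalent step you also mention.
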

	This has been shown in $\cite{Sagna08}$ in addition to the fact that this particular $\theta^*$ minimizes the upper bound of the $L^s$-quantization error $e_s(\Gamma_{\theta,\mu}^{(n)},P)$ induced by the $L^r$-dilated optimal quantizer of the Normal distribution. Moreover, the author has showed that, even if the lower bound $(\ref{lowerbound})$ coincide with the sharp limiting constant in  Zador's Theorem for this value of $\theta^*$, the sequence $\Gamma^{(n)}_{\theta^*,m}$ is still not $L^s$-asymptotically optimal. 
\subsection{Hyper-exponential distributions}
\label{ex1}
Let $X \sim P=f.\lambda_d$ where 
$f(x)=e^{-\lambda |x|^{\alpha}}$
for $\alpha, \lambda >0$ and $|.|$ denotes a norm on $\R^d$. We consider $\mu=0$ so that the distribution $P_{\theta,\mu}$ lies in the same family of distributions as $P$. Note that if one considers the density function $f(x)=e^{-\lambda |x-m|^{\alpha}}$ for $m \in \R$, the study will be the same since the quantities considered are invariant by translation. In other words, if $\Gamma$ is an optimal quantizer of $X$, then $\Gamma -m(1, \ldots,1)$ is an optimal quantizer for $X-m$. Moreover, it is clear that every couple $(\theta,\mu)$ is $P$-admissible.\\
	$\rhd$ If $s<r$, one has
	\begin{align*}
	\int f^{-\frac{s}{r-s}}(x)f_{\theta,0}^{\frac{r}{r-s}}(x)dx= \int e^{-\frac{s\lambda |x|^{\alpha} }{s-r}} e^{-\frac{r\lambda|\theta x|^{\alpha} }{r-s}} = \int e^{-\lambda \left(\frac{s}{s-r}+\frac{r}{r-s} \theta^{\alpha}\right)|x|^{\alpha}}
	\end{align*}
	So $\alpha^{n}_{\theta,0}$ is $L^s$-optimal iff $(\ref{cond1})$ is satisfied which is clearly equivalent to $\theta^{\alpha}> \frac{s}{r}.$ Hence, the interval $I_P(\theta)$ is equal to
	$$I_P(\theta)= \left(\left( \frac{s}{r}\right)^{\frac{1}{\alpha}}, +\infty \right).$$ 
	$\rhd$ For $s\in(r,d+r)$, the idea is as follows. Just as for the Normal distribution, the hyper-Exponential distribution has finite moments of order $r$ for every $r>0$ so the moment assumption made in Theorem $\ref{zadorpierce}$ allows us to choose $a$ as small as possible and the condition $(\ref{condrad})$ coincides with condition $(\ref{cond2})$ as explained in Remark \ref{remsura}. Consequently, we will lead the study relying on $(\ref{cond2})$. One has, for $q=\frac{-s}{d+r-s}$ and every $q'>1$, that
	\begin{align*}
	\int \left( \frac{f_{\theta,0}}{f}\right)^{(1-q)q'}fd\lambda_d = \int e^{-\lambda\big((1-q)q'\theta^{\alpha}+(q-1)q'+1\big)|x|^{\alpha}}d\lambda_d.
	\end{align*}
	So $\alpha^{n}_{\theta,0}$ is $L^s$-optimal iff $(\ref{cond2})$ is satisfied which is clearly equivalent to 
	$$(1-q)q'\theta^{\alpha}+(q-1)q'+1>0 \quad \Leftrightarrow \quad \theta^{\alpha}> 1-\frac{1}{q'(1-q)}$$
and this for every $q'>1$. Hence, one can choose $q'$ as small as possible, for example $q'\rightarrow1^+$, yielding
$$I_P(\theta)=\left(\Big(\frac{s}{d+r}\Big)^{\frac{1}{\alpha}},+\infty\right).$$
\paragraph{Empirical measure theorem}
As explained in the previous example, this study is conducted for $L^r$-dilated optimal quantizers. As previously, we start by determining  the limit of the empirical measure
\begin{align*}
	\frac1n \mbox{card} \{x_i \in \Gamma^n_{\theta} \cap [a,b] \}& \rightarrow \frac{1}{C_{f,r}}\int_{\big[\frac{a}{\theta},\, \frac{b}{\theta}\big]} f^{\frac{d}{r+d}}d\lambda_d = \frac{1}{C_{f,r}}\theta^{-d}\int_{[a,b]} f\Big(\frac{x}{\theta}\Big)^{\frac{d}{r+d}}d\lambda_d
	\end{align*}
	where $C_{f,r}=\int_{\R^d} f^{\frac{d}{d+r}}d\lambda_d$. For every $\theta \in I_P(\theta)$, 
	$$\int_{[a,b]} f(\theta^{-1}x)^{\frac{d}{r+d}}d\lambda_d= \int_{[a,b]}e^{-\lambda\frac{d}{d+r}\theta^{-\alpha}|x|^{\alpha}}=\int_{[a,b]}f(x)^{\frac{d}{(d+r)\theta^{\alpha}}}d\lambda_d.$$
Moreover, one uses the fact that \begin{equation}
\label{outilintexp}
\int_{\R^d}f(|x|)dx=V_d \int_{0}^{+\infty} f(r) r^{d-1}dr \qquad \mbox{and} \qquad  \int_{0}^{+\infty} x^ne^{-ax^b}dx=\frac{\Gamma\big(\frac{n+1}{b}\big)}{ba^{(n+1)/b}},
\end{equation} where $V_d=V(B_d)$ is the volume of the hyper-unit ball on $\R^d$ and $\Gamma$ is the Gamma function, to obtain
	$$\int_{\R^d}f^{\frac{d}{d+r}}d\lambda_d= \int_{\R^d} e^{-\lambda \frac{d}{d+r}|x|^{\alpha}}d\lambda_d=V_d\frac{\Gamma(\frac{d}{\alpha})}{\alpha} \left(\lambda\frac{d}{d+r} \right)^{-\frac{d}{\alpha}}.$$
	By the same arguments, one deduces that $$\int_{\R^d}f(x)^{\frac{d}{(d+r)\theta^{\alpha}}}d\lambda=\frac{1}{\theta^dC_{f,r}}$$
	so that the limiting measure is 
		\begin{align*}
	\frac1n \mbox{card} \{x_i \in \Gamma^n_{\theta,m} \cap [a,b] \}& \rightarrow  \frac{1}{\int_{\R^d}f^{\frac{d}{(d+r)\theta^{\alpha}}}d\lambda_d}\int_{[a,b]}f^{\frac{d}{(d+r)\theta^{\alpha}}}d\lambda_d.
	\end{align*}
	Consequently, we deduce that the sequence $\Gamma^{(n)}_{\theta,0}$ does not satisfy the empirical measure theorem for every $\theta \in I_P(\theta)$ except for a particular value $\theta^*$ given by 
	$$\frac{d}{d+r} {\theta^*}^{-\alpha}=\frac{d}{d+s} \qquad \Leftrightarrow \qquad \theta^*=\left(\frac{d+s}{d+r}\right)^{\frac{1}{\alpha}}$$
	hence leading to the following Proposition
	\begin{prop}
	Let $r,s >0$ and $P=f.\lambda_d$ where 
	$f(x)=e^{-\lambda |x|^{\alpha}}$
	for $\alpha, \lambda >0$. Assume $\Gamma^n$ is an asymptotically $L^r$-optimal quantizer of $P$. Consider
	$$\theta^*=\left(\frac{d+s}{d+r}\right)^{\frac{1}{\alpha}},$$ then the sequence $\Gamma^n_{\theta^*,0}$ satisfies the $L^s$-empirical measure theorem, i.e. 
	$$\frac1n\, {\rm card}\, \big\{x_i \in \Gamma^n_{\theta^*,0} \cap [a,b] \big\} \rightarrow \frac{1}{C_{f,s}}\int_{[a,b]} f^{\frac{d}{s+d}}d\lambda_d.$$
\end{prop}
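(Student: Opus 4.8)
The plan is to reduce the statement to the $L^r$-empirical measure theorem applied to the undilated sequence $\Gamma^n$, and then to identify the limiting measure explicitly by a change of variables, exploiting the hyper-exponential form of $f$.

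First I would record the elementary set identity. Since $\mu=0$ we have $\Gamma^n_{\theta^*,0}=\theta^*\Gamma^n$, so for any box $[a,b]=\prod_{k=1}^d[a_k,b_k]$,
\[
\big\{x_i\in\Gamma^n_{\theta^*,0}\cap[a,b]\big\}=\Big\{x_i\in\Gamma^n\cap\big[\tfrac{a}{\theta^*},\tfrac{b}{\theta^*}\big]\Big\},
\]
hence the two cardinalities coincide. Next, since $P=f.\lambda_d$ is an $L^r$-Zador distribution (its density $f(x)=e^{-\lambda|x|^\alpha}$ is continuous, positive, and has finite polynomial moments of all orders) and $\Gamma^n$ is asymptotically $L^r$-optimal, Theorem \ref{empiricalmeasure} applies to $\Gamma^n$ and gives
\[
\tfrac1n\,{\rm card}\,\Big\{x_i\in\Gamma^n\cap\big[\tfrac{a}{\theta^*},\tfrac{b}{\theta^*}\big]\Big\}\longrightarrow \frac{1}{C_{f,r}}\int_{[a/\theta^*,\,b/\theta^*]}f^{\frac{d}{r+d}}d\lambda_d .
\]
A change of variable $x\mapsto x/\theta^*$ turns the right-hand side into $\frac{(\theta^*)^{-d}}{C_{f,r}}\int_{[a,b]}f\big(\tfrac{x}{\theta^*}\big)^{\frac{d}{r+d}}d\lambda_d$, and the special structure of $f$ gives the pointwise identity $f(x/\theta^*)^{\frac{d}{r+d}}=e^{-\lambda\frac{d}{d+r}(\theta^*)^{-\alpha}|x|^\alpha}=f(x)^{\frac{d}{(d+r)(\theta^*)^\alpha}}$.

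Then I would pin down the normalizing constant. Using the radial reduction $\int_{\R^d}g(|x|)\,dx=V_d\int_0^{+\infty}g(t)t^{d-1}dt$ together with $\int_0^{+\infty}t^{n}e^{-at^b}dt=\frac{\Gamma((n+1)/b)}{b\,a^{(n+1)/b}}$ (the identities $(\ref{outilintexp})$), one obtains for every $\beta>0$
\[
\int_{\R^d}f(x)^{\beta}d\lambda_d=\int_{\R^d}e^{-\lambda\beta|x|^\alpha}d\lambda_d=\frac{V_d\,\Gamma(d/\alpha)}{\alpha}\,(\lambda\beta)^{-d/\alpha}.
\]
Taking $\beta=\frac{d}{d+r}$ recovers $C_{f,r}=\int_{\R^d}f^{\frac{d}{d+r}}d\lambda_d$, and taking $\beta=\frac{d}{(d+r)(\theta^*)^\alpha}$ shows $\int_{\R^d}f^{\frac{d}{(d+r)(\theta^*)^\alpha}}d\lambda_d=(\theta^*)^{-d}C_{f,r}$; therefore the prefactor $\frac{(\theta^*)^{-d}}{C_{f,r}}$ equals $\big(\int_{\R^d}f^{\frac{d}{(d+r)(\theta^*)^\alpha}}d\lambda_d\big)^{-1}$, so the limiting measure is the probability measure proportional to $f^{\frac{d}{(d+r)(\theta^*)^\alpha}}.\lambda_d$.

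Finally, plugging in $\theta^*=\big(\frac{d+s}{d+r}\big)^{1/\alpha}$ makes $\frac{d}{(d+r)(\theta^*)^\alpha}=\frac{d}{d+s}$, which is exactly the exponent appearing in the $L^s$-empirical measure theorem; combined with $C_{f,s}=\int_{\R^d}f^{\frac{d}{d+s}}d\lambda_d$ this yields the announced limit. I do not expect a serious obstacle: the only delicate points are checking that the cardinality limit transports through the dilation (immediate, since $x\mapsto\theta^*x$ is a bijection of $\R^d$ mapping boxes to boxes) and keeping careful track of the normalizing constants so that the limit is a genuine probability measure — the latter is guaranteed by the explicit Gamma-integral computation. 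One should also note in passing that $\theta^*\in I_P(\theta)=\big((s/(d+r))^{1/\alpha},+\infty\big)$, since $(\theta^*)^\alpha=\frac{d+s}{d+r}>\frac{s}{d+r}$, so every integral encountered is finite.
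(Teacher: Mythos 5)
Your proposal is correct and follows essentially the same route as the paper's own derivation preceding the proposition: the set identity $\Gamma^n_{\theta^*,0}\cap[a,b]=\theta^*\big(\Gamma^n\cap[a/\theta^*,b/\theta^*]\big)$, the $L^r$-empirical measure theorem for $\Gamma^n$, the change of variables together with $f(x/\theta^*)^{\frac{d}{d+r}}=f(x)^{\frac{d}{(d+r)(\theta^*)^{\alpha}}}$, and the Gamma-integral normalization via $(\ref{outilintexp})$. The only slip is the intermediate identity, which should read $\int_{\R^d}f^{\frac{d}{(d+r)(\theta^*)^{\alpha}}}d\lambda_d=(\theta^*)^{d}C_{f,r}$ rather than $(\theta^*)^{-d}C_{f,r}$ (the paper's text contains the analogous typo); with the corrected value the prefactor $(\theta^*)^{-d}/C_{f,r}$ is indeed the reciprocal of that integral, and for $\theta^*=\big(\tfrac{d+s}{d+r}\big)^{1/\alpha}$ one gets $(\theta^*)^{d}C_{f,r}=C_{f,s}$, so your conclusion stands unchanged.
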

\noindent
Note that $\theta^*$ does not depend on the parameter $\lambda$ of the distribution, only on $\alpha$.
In the next proposition, we show that the sequence $\alpha^n_{\theta^*,0}$ satisfies the lower bound $(\ref{lowerbound})$.
	\begin{prop}
		Let $r,s >0$ and $P=f.\lambda_d$ where 
		$f(x)=e^{-\lambda |x|^{\alpha}}$
		for $\alpha, \lambda >0$. Then, the asymptotic lower bound of the $L^s$-error of the sequence $\alpha^n_{\theta^*,0}$ with $\theta^*=\left(\frac{d+s}{d+r}\right)^{\frac{1}{\alpha}}$ satisfies 
		$$Q_{r,s}^{\rm Inf}(P,\theta^*)=Q_s(P)$$
		where $\ds Q_{r,s}^{\rm Inf}(P,\theta^*)=(\theta^*)^{s+d} \widetilde{J}_{s,d} \left( \int f^{\frac{d}{d+r}}d\lambda_d \right)^{\frac sd} \int f^{-\frac{s}{d+r}}(x)f_{\theta^*,0}(x)dx$. 
	\end{prop}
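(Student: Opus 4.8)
The plan is to turn the identity into an explicit computation, exploiting the fact that $\theta^*$ is chosen exactly so that the mixed integrand collapses onto a single power of $f$. First I would note that $f(x)=e^{-\lambda|x|^\alpha}>0$ for every $x$, so the integral appearing in the definition of $Q_{r,s}^{\rm Inf}(P,\theta^*)$ runs over all of $\R^d$, and that $f_{\theta^*,0}(x)=e^{-(\theta^*)^\alpha\lambda|x|^\alpha}$ with $(\theta^*)^\alpha=\frac{d+s}{d+r}$. This yields the pointwise identity
\[
f^{-\frac{s}{d+r}}(x)\,f_{\theta^*,0}(x)=\exp\!\Big(\tfrac{s}{d+r}\lambda|x|^\alpha-\tfrac{d+s}{d+r}\lambda|x|^\alpha\Big)=e^{-\frac{d}{d+r}\lambda|x|^\alpha}=f(x)^{\frac{d}{d+r}},
\]
so that $\int_{\R^d}f^{-\frac{s}{d+r}}f_{\theta^*,0}\,d\lambda_d=\int_{\R^d}f^{\frac{d}{d+r}}d\lambda_d=C_{f,r}$; in particular this integral is finite, consistent with $\theta^*\in I_P(\theta)=\big((\tfrac{s}{d+r})^{1/\alpha},+\infty\big)$, so the asymptotic lower bound $(\ref{lowerbound})$ genuinely applies to $\alpha^{(n)}_{\theta^*,0}$.

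Next I would evaluate the remaining radial integrals via the two elementary identities $(\ref{outilintexp})$ already used in the empirical-measure discussion. Setting $K:=V_d\,\Gamma(d/\alpha)/\alpha$, one has $\int_{\R^d}e^{-c|x|^\alpha}dx=K\,c^{-d/\alpha}$ for every $c>0$, hence $C_{f,r}=K\big(\tfrac{d\lambda}{d+r}\big)^{-d/\alpha}$ and likewise $C_{f,s}:=\int_{\R^d}f^{\frac{d}{d+s}}d\lambda_d=K\big(\tfrac{d\lambda}{d+s}\big)^{-d/\alpha}$. Taking the quotient eliminates both the normalizing constant $K$ and the scale parameter $\lambda$:
\[
\frac{C_{f,s}}{C_{f,r}}=\Big(\frac{d+s}{d+r}\Big)^{d/\alpha}=(\theta^*)^{d}.
\]

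Finally I would substitute these into the definition of $Q_{r,s}^{\rm Inf}(P,\theta^*)$, obtaining $(\theta^*)^{s+d}\widetilde J_{s,d}\,C_{f,r}^{s/d}\cdot C_{f,r}=\widetilde J_{s,d}\,\big((\theta^*)^{d}C_{f,r}\big)^{(s+d)/d}=\widetilde J_{s,d}\,C_{f,s}^{(s+d)/d}$, and by Zador's Theorem $(\ref{Zadorgeneral})$ this last expression is precisely $Q_s(P)$ written with $\|f\|_{d/(d+s)}$, which would close the argument. There is no real analytic difficulty here — every integral is an explicit Gamma integral — and the only point requiring care is the bookkeeping of the exponents of $\theta^*$, $\lambda$ and $K$: one must check that $K$ cancels among $C_{f,r}$, $C_{f,s}$ and the mixed integral, and that $(\theta^*)^{s+d}C_{f,r}^{(s+d)/d}$ reassembles into $C_{f,s}^{(s+d)/d}$ through $(\theta^*)^d=C_{f,s}/C_{f,r}$. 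The genuine mechanism is the exponent collapse in the first step, which is special to the choice $\theta^*=\big(\tfrac{d+s}{d+r}\big)^{1/\alpha}$; for any other $\theta$ the integrand would not be a power of $f$ and the explicit evaluation would fail.
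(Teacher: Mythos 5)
Your proposal is correct and follows essentially the same route as the paper's proof: evaluate the radial integrals explicitly via the Gamma-integral identities $(\ref{outilintexp})$ and recombine the exponents of $\theta^*$ to reassemble $\widetilde J_{s,d}\big(\int f^{\frac{d}{d+s}}d\lambda_d\big)^{\frac{d+s}{d}}=Q_s(P)$. Your preliminary observation that the choice of $\theta^*$ makes the integrand collapse pointwise to $f^{\frac{d}{d+r}}$ is just a transparent restatement of the paper's computation showing the two integrals coincide, so there is no substantive difference.
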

\begin{proof}
	Elementary computations based on $(\ref{outilintexp})$ show that
	$$\int f^{-\frac{s}{d+r}}(x)f_{\theta^*,0}(x)dx =V_d \frac{\Gamma(\frac{d}{\alpha})}{\alpha \lambda^{-\frac{d}{\alpha}}} \left(\frac{d}{r+d} \right)^{-\frac {d}{\alpha}} \qquad 
	\mbox{and} \qquad \int f^{\frac{d}{d+r}}d\lambda_d=V_d \frac{\Gamma(\frac{d}{\alpha})}{\alpha \lambda^{-\frac{d}{\alpha}}}\left(\frac{d}{r+d} \right)^{-\frac {d}{\alpha}} $$
	so that 
	$$(\theta^*)^{s+d} \left( \int f^{\frac{d}{d+r}}d\lambda_d \right)^{\frac sd} \int f^{-\frac{s}{d+r}}(x)f_{\theta^*,0}(x)dx =\left(V_d \frac{\Gamma(\frac{d}{\alpha})}{\alpha}\lambda^{-\frac{d}{\alpha}} \right)^{1+\frac ds} \left( \frac{s+d}{d}\right)^{\frac{s+d}{\alpha}}=\left( \int f^{\frac{d}{d+s}}d\lambda_d \right)^{\frac {d+s}{d}} $$
	and hence the result.
	\hfill $\square$
\end{proof}
\medskip

It is interesting to see whether $\Gamma^{(n)}_{\theta^*,0}$ for $\theta^{*}=\left(\frac{s+d}{r+d} \right)^{\frac{1}{\alpha}}$ is $L^s$-asymptotically optimal. For this, we compute the upper bound of the $L^s$-quantization error $e_s(\Gamma_{\theta^*,0}^n,P)$ given in  in Corollary $\ref{specificoptimal}$ and see if it reaches the sharp constant in Zador's Theorem for the different values of $s$. Note that if $\alpha^{(n)}$ is a greedy quantization sequence, one cannot make any interesting conclusions since it is clear that the sharp Zador constant cannot be attained by our upper bounds. \\
Let $r,s >0$ and $\Gamma^n$ an $L^r$-optimal quantizer of $P$. Elementary computations based on $(\ref{outilintexp})$ show that the upper bounds of the quantization error of $P$ induced by $\Gamma^n_{\theta^*,0}$, for $\theta^*=\left(\frac{s+d}{r+d}\right)^{\frac{1}{\alpha}}$, are given by
\[
Q_{r,s}^{\sup,\theta^*}=
\left\{
\begin{array}{lr}
\widetilde{J}_{r,d}^{\frac1r} \left(\int f^{\frac{d}{d+s}d\lambda_d} \right)^{\frac{d+s}{ds}} & \mbox{if } s<r,\\
 \widetilde{\kappa}_{\theta^*,m}^{\text{Optimal}}  \left(\frac{V_d \Gamma(\frac{d}{\alpha})}{\alpha \lambda^{\frac{d}{\alpha}}} \right)^{\frac1s} \left(\frac{s+d}{d}\right)^{\frac{d}{s\alpha}} \left(\frac{s+d}{r+d} \right)^{\frac{1}{\alpha}}  & \mbox{if } r<s<d+r.
\end{array}
\right.
\]
One can easily notice that, for the different values of $s$, $Q_s(P) \leq Q_{r,s}^{\sup,\theta^*}$. Consequently, no conclusions can be made on the $L^s$-asymptotically optimality of the sequence $(\Gamma^n_{\theta^*,0})_{n \geq 0}$. However, if we have $\widetilde{J}_{s,d}^{\frac1s}$ instead of $\widetilde{J}_{r,d}^{\frac1r}$, then one can reach Zador's sharp constant for $r<s$ and gets closer to it for $s \in (r,d+r)$.
\subsection{Hyper-Gamma distributions}
\label{ex2}
Let $X \sim P=f.\lambda_d$ where 
$f(x)=|x|^{\beta}e^{-\lambda |x|^{\alpha}}$
for $\alpha, \lambda >0$ and $\beta>-d$ and $|\cdot|$ denotes any norm on $\R^d$. We consider $\mu=0$ so that $P_{\theta,\mu}$ lies in the same family of distributions as $P$. In this case, every couple $(\theta,\mu)$ is $P$-admissible since $\mbox{supp}(P)=\R^d$.\\
$\rhd$ If $s<r$, one has
	\begin{align*}
	\int f^{-\frac{s}{r-s}}(x)f_{\theta,0}^{\frac{r}{r-s}}(x)dx= \theta^{\frac{r\beta}{r-s}}\int |x|^{\beta} e^{-\lambda \left(\frac{s}{s-r}+\frac{r}{r-s} \theta^{\alpha}\right)|x|^{\alpha}}
	\end{align*}
	So $\alpha^{n}_{\theta,0}$ is $L^s$-optimal iff $(\ref{cond1})$ is satisfied which is clearly equivalent to 
	$\theta^{\alpha}> \frac{s}{r}.$ Consequently,
	$$I_P(\theta)=\left(\left(\frac sr\right)^{\frac{1}{\alpha}},+\infty\right).$$
	$\rhd$ If $s<d+r$, the conditions $(\ref{cond2})$ and $(\ref{condrad})$ yield the same result as explained in Remark $\ref{remsura}$. For $q=\frac{-s}{d+r-s}$ and every $q'>1$, one has
	\begin{align*}
	\int\left( \frac{f_{\theta,0}}{f}\right)^{(1-q)q'}f(x)d\lambda_d = \int|x|^{\beta} e^{-\lambda\big((1-q)q'\theta^{\alpha}+(q-1)q'+1\big)|x|^{\alpha}}d\lambda_d.
	\end{align*}
	So $\alpha^{n}_{\theta,0}$ is $L^s$-optimal iff 
	$$(1-q)q'\theta^{\alpha}+(q-1)q'+1>0 \quad \Leftrightarrow \quad \theta^{\alpha}> 1-\frac{1}{q'(1-q)}$$
and this for every $q'>1$. Hence, one can choose $q'$ as small as possible, for example $q'\rightarrow1^+$, yielding
$$I_P(\theta)=\left(\Big(\frac{s}{d+r}\Big)^{\frac{1}{\alpha}},+\infty\right).$$
\paragraph{Empirical measure theorem}
As explained in the previous examples, this study is conducted for $L^r$-dilated optimal quantizers. 
First, we compute the limit
\begin{align*}
	\frac1n \mbox{card} \{x_i \in \Gamma^n_{\theta,0} \cap [a,b] \}& \rightarrow \frac{1}{C_{f,r}}\int_{\big[\frac{a}{\theta},\, \frac{b}{\theta}\big]} f^{\frac{d}{r+d}}d\lambda_d = \frac{1}{C_{f,r}}\theta^{-d}\int_{[a,b]} f\Big(\frac{x}{\theta}\Big)^{\frac{d}{r+d}}d\lambda_d
	\end{align*}
	where $C_{f,r}=\int_{\R^d} f^{\frac{d}{d+r}}d\lambda_d$. For every $\theta \in I_P(\theta)$, 
	$$\int_{[a,b]} f(\theta^{-1}x)^{\frac{d}{r+d}}d\lambda_d =\theta^{-\frac{d\beta}{d+r}}\int_{[a,b]}|x|^{\frac{d\beta}{d+r}}e^{-\lambda\frac{d}{d+r}\frac{1}{\theta^{\alpha}}|x|^{\alpha}}d\lambda_d.$$
Moreover, using $(\ref{outilintexp})$ yields
	$$\int_{\R^d}f^{\frac{d}{d+r}}d\lambda_d= \int_{\R^d} |x|^{\frac{d\beta}{d+r}}e^{-\lambda \frac{d}{d+r}|x|^{\alpha}}d\lambda_d=V_d\frac{\Gamma(\frac{d+\frac{d\beta}{d+r}}{\alpha})}{\alpha} \left(\lambda\frac{d}{d+r} \right)^{-\frac{1}{\alpha}\Big(d+\frac{d\beta}{d+R}\Big)}.$$
	Likewise, one obtains
	 $$\int_{\R^d}|x|^{\frac{\beta d (\theta^{\alpha}-1)}{\theta^{\alpha}(d+r)}}f(x)^{\frac{d}{(d+r)\theta^{\alpha}}}d\lambda=C_{f,r}\theta^{d+\frac{d\beta}{d+r}}.$$
	Consequently, the limiting measure is 
		\begin{align*}
	\frac1n \mbox{card} \{x_i \in \Gamma^n_{\theta,m} \cap [a,b] \}& \rightarrow  \frac{1}{\int_{\R^d}|x|^{\frac{\beta d (\theta^{\alpha}-1)}{\theta^{\alpha}(d+r)}}f^{\frac{d}{(d+r)\theta^{\alpha}}}d\lambda_d}\int_{[a,b]}|x|^{\frac{\beta d (\theta^{\alpha}-1)}{\theta^{\alpha}(d+r)}}f^{\frac{d}{(d+r)\theta^{\alpha}}}d\lambda_d.
	\end{align*}
	Hence, in order for the sequence $\Gamma^{(n)}_{\theta,0}$ to satisfy the empirical measure theorem, there is two conditions to fulfill 
	$$\frac{d}{(d+r)\theta^{\alpha}}=\frac{d}{d+s} \qquad \mbox{and} \qquad \frac{\beta d (\theta^{\alpha}-1)}{\theta^{\alpha}(d+r)}=0.$$ 
	This is true for $$\beta^*=\frac{d+r}{d(d+s)} \qquad \mbox{and} \qquad  \theta^{*}=\left(\frac{d+s}{d+r}\right)^{\frac{1}{\alpha}}.$$
So, one can deduce with the following proposition.
\begin{prop}
	Let $r,s >0$ and $P=f.\lambda_d$ where 
	$f(x)=|x|^{\beta}e^{-\lambda |x|^{\alpha}}$
	for $\alpha, \lambda >0$ and $\beta>-d$ and $|\cdot|$ is any norm on $\R^d$. Assume $\Gamma^n$ is an asymptotically $L^r$-optimal quantizer of $P$. Consider
	$$\beta=\frac{d+r}{d(d+s)}\qquad \mbox{and} \qquad \theta^*=\left(\frac{d+s}{d+r}\right)^{\frac{1}{\alpha}},$$ then the sequence $\Gamma^n_{\theta^*,0}$ satisfies the $L^s$-empirical measure theorem, i.e. 
	$$\frac1n\, {\rm card}\, \big\{x_i \in \Gamma^n_{\theta^*,0} \cap [a,b] \big\} \rightarrow \frac{1}{C_{f,s}}\int_{[a,b]} f^{\frac{d}{s+d}}d\lambda_d.$$
\end{prop}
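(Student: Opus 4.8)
The plan is to repeat, for this density, the scheme already used above for the Gaussian and hyper-exponential cases: reduce the $L^s$-statement for the dilated sequence $\Gamma^n_{\theta^*,0}$ to the $L^r$-empirical measure theorem (Theorem~\ref{empiricalmeasure}) applied to the un-dilated sequence $\Gamma^n$, and then carry out the explicit computation with $f(x)=|x|^{\beta}e^{-\lambda|x|^{\alpha}}$.

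First I would exploit that $\mu=0$, so that dilating by $\theta^*$ is pure scaling: for all $a,b\in\R^d$ and all $n\geq1$,
$$\big\{x_i\in\Gamma^n_{\theta^*,0}\cap[a,b]\big\}=\Big\{x_i\in\Gamma^n\cap\Big[\frac{a}{\theta^*},\frac{b}{\theta^*}\Big]\Big\},$$
so the two normalized cardinalities agree for every $n$. Since (up to the normalizing constant) $f$ is absolutely continuous and has finite polynomial moments of every order, $P$ is an $L^r$-Zador distribution, and $\Gamma^n$ is asymptotically $L^r$-optimal by hypothesis; hence Theorem~\ref{empiricalmeasure} yields
$$\frac1n\,{\rm card}\,\big\{x_i\in\Gamma^n_{\theta^*,0}\cap[a,b]\big\}\;\underset{n\to+\infty}{\longrightarrow}\;\frac{1}{C_{f,r}}\int_{[\frac{a}{\theta^*},\,\frac{b}{\theta^*}]}f^{\frac{d}{d+r}}\,d\lambda_d.$$

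Next I would change variables $x=\theta^*z$ in the limiting integral, turning it into $\theta^{*-d}\int_{[a,b]}f(x/\theta^*)^{\frac{d}{d+r}}\,d\lambda_d(x)$, and expand
$$f(x/\theta^*)^{\frac{d}{d+r}}=\theta^{*-\frac{\beta d}{d+r}}\,|x|^{\frac{\beta d}{d+r}}\,e^{-\lambda\frac{d}{(d+r)\theta^{*\alpha}}|x|^{\alpha}}.$$
The heart of the proof is then the twofold matching with $f^{\frac{d}{d+s}}=|x|^{\frac{\beta d}{d+s}}e^{-\lambda\frac{d}{d+s}|x|^{\alpha}}$: the value $\theta^{*\alpha}=\frac{d+s}{d+r}$ makes the exponential rate $\frac{d}{(d+r)\theta^{*\alpha}}$ equal to $\frac{d}{d+s}$, and the prescribed value of $\beta$ makes the polynomial exponent $\frac{\beta d}{d+r}$ equal to the one appearing in $f^{\frac{d}{d+s}}$. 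With both identities in force, the integrand is a constant (in $x$) multiple of $f(x)^{\frac{d}{d+s}}$, and that constant, together with the prefactor $1/C_{f,r}$, is forced to equal $1/C_{f,s}$ because the limit is a probability measure (it is the weak limit of the empirical probability measures $\frac1n\sum_{x_i\in\Gamma^n_{\theta^*,0}}\delta_{x_i}$); alternatively one evaluates $C_{f,r}$, $C_{f,s}$ and the intermediate normalization explicitly via $(\ref{outilintexp})$ and checks the constant is $C_{f,s}^{-1}$. This gives $\frac1n\,{\rm card}\,\{x_i\in\Gamma^n_{\theta^*,0}\cap[a,b]\}\to\frac{1}{C_{f,s}}\int_{[a,b]}f^{\frac{d}{d+s}}\,d\lambda_d$, which is exactly the $L^s$-empirical measure theorem for $\Gamma^n_{\theta^*,0}$.

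I do not expect a genuinely hard step here: the argument is a change of variables followed by two scalar matchings. The only point to be careful about is that these two matchings — on the exponential decay rate and on the polynomial prefactor exponent — have to hold \emph{simultaneously}, and it is exactly this coupling that dictates the joint choice of $\theta^*$ and $\beta$ in the statement; the normalizing constants then settle automatically by the probabilistic remark above (or by a short Gamma-function computation using $(\ref{outilintexp})$).
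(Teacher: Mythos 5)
Your reduction of the statement to the $L^r$-empirical measure theorem by pure scaling (since $\mu=0$), followed by the change of variables $x=\theta^* z$, is exactly the paper's route. The gap is in the ``twofold matching'' you then invoke: it does not hold. After scaling one gets $f(x/\theta^*)^{\frac{d}{d+r}}=\theta^{*-\frac{\beta d}{d+r}}\,|x|^{\frac{\beta d}{d+r}}\,e^{-\lambda\frac{d}{(d+r)\theta^{*\alpha}}|x|^{\alpha}}$, so the dilation multiplies the polynomial factor only by a \emph{constant}; the exponent of $|x|$ in the limiting density is $\frac{\beta d}{d+r}$ whatever $\theta^*$ is, while the target $f^{\frac{d}{d+s}}$ carries the exponent $\frac{\beta d}{d+s}$ with the \emph{same} $\beta$ (it is the same $f$). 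Equating the two exponents, $\beta$ cancels, and one is left with $\frac{1}{d+r}=\frac{1}{d+s}$, i.e. $s=r$, unless $\beta=0$. Hence no ``prescribed value of $\beta$'' can produce the matching you claim; in particular $\beta=\frac{d+r}{d(d+s)}$ gives $\frac{\beta d}{d+r}=\frac{1}{d+s}$ versus $\frac{\beta d}{d+s}=\frac{d+r}{(d+s)^2}$, which differ whenever $s\neq r$. Since the limiting density is then not proportional to $f^{\frac{d}{d+s}}$, your normalization remark (``the constant settles because the limit is a probability measure'') cannot repair the argument: normalization fixes a constant, not a failure of proportionality.

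For comparison, the paper's own proof follows the same computation but makes the obstruction explicit: it rewrites the limit density as $|x|^{\frac{\beta d(\theta^{\alpha}-1)}{\theta^{\alpha}(d+r)}}\,f^{\frac{d}{(d+r)\theta^{\alpha}}}$ and records the two conditions that must hold simultaneously, namely $\frac{d}{(d+r)\theta^{\alpha}}=\frac{d}{d+s}$ and $\frac{\beta d(\theta^{\alpha}-1)}{\theta^{\alpha}(d+r)}=0$. (Its subsequent assertion that these are met by $\theta^*=\big(\frac{d+s}{d+r}\big)^{1/\alpha}$ together with $\beta^*=\frac{d+r}{d(d+s)}$ is itself not borne out by the condition it derives: with that $\theta^*$ the second condition reads $\frac{\beta d(s-r)}{(d+r)(d+s)}=0$ and forces $\beta=0$, i.e. the hyper-exponential case, when $s\neq r$.) So your proposal reproduces the paper's computation up to the decisive point but then asserts, rather than derives, the matching — and the assertion is false. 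A correct write-up along these lines must exhibit the leftover factor $|x|^{\frac{\beta d(\theta^{\alpha}-1)}{\theta^{\alpha}(d+r)}}$ and address when it disappears, which it does not for the stated parameters.
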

Note that one obtains the same results  for the distribution with density $|x-m|^{\beta}e^{-\lambda |x-m|^{\alpha}}$ since it is invariant by translation. \\

Elementary computations, similar to those established previously, show that one cannot make any conclusions on the $L^s$-optimality of the $L^r$-dilated sequence considering the values of $\beta$ and $\theta^*$ deduced in the previous proposition. In other words, one cannot know whether the lower and upper bound of the $L^s$-quantization error induced by $\alpha^n_{\theta^*,0}$ are equal or comparable to the sharp limiting constant $Q_s(P)$ in Zador's Theorem. 
\subsection{Numerical observations}
We just showed that, for a particular value $\theta^*$, the sequence $\alpha^{(n)}_{\theta^*,\mu}$ satisfies the $L^s$-empirical measure theorem and that the lower bound of the $L^s$-quantization error induced by this sequence attains the sharp constant in Zador's Theorem, the upper bound only getting close. This pushes to conjecture that the optimally $L^r$-dilated sequence $(\alpha^n_{\theta^*,\mu})$  is asymptotically $L^s$-optimal. Numerical experiments were established in \cite{Sagna08} to prove this conjecture numerically for optimal quantizers. In this section, we implement similar experiments to come to this type of conclusion for optimally $L^r$-dilated greedy quantization sequences. We denote $a^{r,(n)}$ the $L^r$-greedy quantization sequence.  
\paragraph{Normal distribution}
We start with the Normal distribution $\mathcal{N}(0,1)$ and compute the corresponding $L^3$-optimal greedy quantization sequence $a^{3,(n)}$ by a standard Newton Raphson algorithm on one hand, and the optimally $L^2$-dilated greedy quantization sequence $a^{2,(n)}_{\theta^*,\mu}$ with $\theta^*=\sqrt{\frac{s+d}{r+d}}=\sqrt{\frac43}$ and $\mu=0$, on the other hand. We make a linear regression of the two resulting sequences for different values of the size $n$ and expose, in Table $\ref{reg1}$, the corresponding regression coefficients. 
\begin{table}
\begin{center}
\begin{tabular}{cc|cc|cc}
\hline
\multicolumn{2}{c|}{Normal Distribution} & \multicolumn{2}{c|}{Exponential distribution} & \multicolumn{2}{c}{$P=f.\lambda_d$ with $f(x)=x^2 e^{-x^2}$} \\
\hline
$n$ & Regression coefficient & $n$ & Regression coefficient & $n$ & Regression coefficient\\
\hline
$255$ & $0.9818$ & $ 373$ & $ 0.981$ & $255$ & $0.9399$\\
$511$ & $0.9855$ & $745$ & $0.988$ & $511$ & $0.9405$\\
 $1\,023$ & $0.9945$ & $1\, 489$ & $0.990$ & $1\, 023$ & $0.9406$\\
 \hline
\end{tabular}
\end{center}
\vspace{-0.5cm}
\caption{Regression coefficients of the optimally $L^2$-dilated greedy sequence on the $L^3$-optimal greedy sequence for $\mathcal{N}(0,1)$, $\mathcal{E}(1)$ and $P=f.\lambda_d$ with $f(x)=x^2e^{-x^2}$.}
\label{reg1}
\end{table}
%
%
\paragraph{Exponential distribution}
We consider the exponential distribution $\mathcal{E}(1)$ with parameter $\lambda=1$. In other words, it is the distribution studied in Example \ref{ex1} for $d=1$ and $\alpha=1$. We compute the $L^3$-optimal greedy quantization sequence $a^{3,(n)}$ by a Newton Raphson algorithm and the optimally $L^2$-dilated greedy quantization sequence $a^{2,(n)}_{\theta^*,\mu}$ with $\theta^*=\left(\frac{s+d}{r+d}\right)^{\frac{1}{\alpha}}=\frac43$ and $\mu=0$. The $L^2$-optimal greedy quantization sequence is obtained by a standard Lloyd's algorithm. We expose, in Table \ref{reg1}, the regression coefficients obtained by regressing the $L^2$-dilated sequences on the $L^3$ greedy sequences.
\paragraph{Hyper-Gamma distribution}
 Let $d=1$. We consider the Hyper-Gamma probability distribution with parameters $\lambda=1$ and $\alpha=\beta=2$ so the density is given  by $$f(x)=x^2e^{-x^2}.$$ 
In example $\ref{ex2}$, we showed that the hyper-Gamma distribution satisfy the $L^s$-empirical measure for a particular parameter $\beta$ and a particular $\theta^*\in I_P(\theta)$. However, we conduct here the experiment for different values and see if one always have the same convergence of the regression coefficients to $1$. We compute the $L^3$-optimal greedy quantization sequence $a^{3,(n)}$ by a Newton Raphson algorithm and the $L^2$-optimal greedy quantization sequence $a^{2,(n)}$ by a Lloyd's algorithm. The optimally $L^2$-dilated greedy sequence is given by $\alpha^{2,(n)}_{\theta^*,\mu}$ with $\theta^*=\left( \frac{s+d}{r+d}\right)^{\frac{1}{\alpha}}=\sqrt{\frac43}$ and $\mu=0$. Table \ref{reg1} shows the regression coefficients obtained by regressing the $L^2$-dilated sequences on the $L^3$ greedy sequences where we observe a slower convergence, even a divergence of the coefficients to $1$, hence deducing that this sequence cannot be $L^s$-asymptotically optimal.
\paragraph{Conjecture}
For the Normal and exponential distributions, the regression coefficient converges to $1$ for specific values of $n$. This leads us to conjecture that there exists a sub-sequence of the greedy quantization sequence for which the regression coefficient converges to $1$, i.e. for which the sequence is asymptotically $L^s$-optimal.\\
 In fact, this ``{\em subsequence}'' topic has already been investigated in $\cite{papiergreedy}$  where it has been shown (numerically) that there exist sub-optimal greedy quantization sequences, in the sense that the graphs representing the weights of the Vorono\"i cells converge towards the density curve of the distribution for certain sizes $n$ of the sequence. For example, the greedy quantization sequence of $\mathcal{N}(0,1)$, and more generally of symmetrical distributions around $0$, is sub-optimal and the optimal sub-sequence is of the form $a^{(n)}=a^{(2^k-1)}$ for $k \in \N^*$.\\
 Hence, it is natural to conjecture that the optimally $L^r$-dilated sub-sequences of the same size are asymptotically $L^s$-optimal. 
\section{Application to numerical integration}
\label{application}
Optimal quantizers and greedy quantization sequences are used in numerical probability where one relies on cubature formulas  to approximate the exact value of $\E f(X)$, for a continuous bounded function $f$ and a random variable $X$ with distribution $P$, by
\begin{equation}
\label{quad}
\E f(X) \approx \E f(\widehat X^{\alpha^{(n)}}) = \sum_{i=1}^{n} p_i^n f(\alpha_i^n)
\end{equation}
where $\alpha^{(n)}$ designates the optimal or greedy quantization sequence of the random variable $X$ and $p_i^n=P\big(X \in W_i(\alpha^{(n)})\big)$ represents the weight of the $i^{\rm th}$ Vorono\"i cell corresponding to $\alpha^{(n)}$ for every $i \in \{1, \ldots,n\}$. A new iterative formula for the approximation of $\E f (X)$  using greedy quantization sequences is given in \cite{papiergreedy}, based on the recursive character of greedy quantization. Upper error bounds of these approximations have been investigated repeatedly in the literature, in \cite{papiergreedy, Pages15, Pages18} for example. \\

In this section, we present what advantages the dilated quantization sequences bring to the numerical integration field. This application was first introduced  in $\cite{Sagna08}$ by A. Sagna for optimal quantizers. Here, we briefly recall his idea and emphasize that it also works with dilated greedy quantization sequences as well.\\

Let $X \in L^{\beta}, \beta \in (2,+\infty)$ and let $f$ be a locally Lipschitz function, in the sense that, there exists a bounded constant $C>0$ such that 
\begin{equation}
\label{loclip}
|f(x)-f(y)|\leq C |x-y|\big(1+|x|^{\beta-1}+|y|^{\beta-1}\big).
\end{equation}
For every quantizer $\alpha^{(n)}$ (not necessarily stationary), one has, by applying H\"older's inequality with the conjugate exponents $r$ and $r'=\frac{r}{r-1}$, that 
\begin{align}
\label{ubound}
\big| \E f(X)-\E f(\widehat X^{\alpha^{(n)}})\big| \leq  \E\big|f(X)-f(\widehat X^{\alpha^{(n)}})\big| & \leq C \,\E\Big( \big|X-\widehat X^{\alpha^{(n)}}\big|\, \big(1+|X|^{\beta-1}+|\widehat X^{\alpha^{(n)}}|^{\beta-1}\big)\Big)\nonumber \\
 &\leq  C\, \big\|X-\widehat X^{\alpha^{(n)}}\big\|_r\, \Big(1+\|X\|_{(\beta-1)r'}^{\beta-1} +\|\widehat X^{\alpha^{(n)}}\|_{(\beta-1)r'}^{\beta-1} \Big).
\end{align}
In order for this upper bound to make sense, one should have 
\begin{equation}
\label{condr}(\beta-1)r'=\frac{(\beta-1)r}{r-1}\leq \beta \qquad \Longleftrightarrow \qquad r \geq \beta>2.
\end{equation}

In practice, since most algorithms to optimize quantization (of $n$-tuples of greedy sequences) are much easier to implement in the quadratic case, it is more convenient to use such quadratic optimal or greedy quantizers in this type of applications to approximate expectations of the form $\E f(X)$. However, if we use $L^2$-quantizers $\alpha^{(n)}$ in our case, we obtain upper bounds involving an $L^r$-quantization error for $r>2$ (see $(\ref{condr})$) which is not really optimal since the quantizer used is not $L^r$-optimal for $r>2$. So, an idea is to use $L^2$-dilated quantizers $\alpha^{(n)}_{\theta,\mu}$ which is itself $L^r$-rate optimal for given values of $\theta$ and $\mu$ depending on the probability distribution $P$. For example, if $X\sim \mathcal{N}(m,I_d)$, then one chooses $\mu=m$ and $\theta=\sqrt{\frac{r+d}{2+d}}$. \\

Hence, one approximates $\E f(X)$ by $\E f(\widehat X^{\alpha^{(n)}_{\theta,\mu}})$ rather than $\E f(\widehat X^{\alpha^{(n)}})$ via
$$\E f(\widehat X^{\alpha^{(n)}_{\theta,\mu}})=\sum_{i=1}^{n} p_i^{\theta,\mu} f(\alpha_i^{\theta,\mu})$$
with $p_i^{\theta,\mu}$ being the weight of the $i^{th}$ Vorono\"i cell corresponding to the quantization sequence $\alpha^{(n)}_{\theta^*,\mu}$ given by 
\begin{equation}
\label{dilatedweight}
P\big(X \in W_i(\alpha^{(n)}_{\theta^*,\mu}) \big)=\int _{W_i(\alpha^{(n)}_{\theta^*,\mu})} f(x) d\lambda_d(x)= \theta^d \int _{W_i(\alpha^{(n)})}f_{\theta^*,\mu} (z) d\lambda_d(z)
=P\big(\widehat X^{\alpha^{(n)}_{\theta^*,\mu}} \in W_i(\alpha^{(n)}) \big)
\end{equation}
where we applied the change of variables $z=\mu+\frac{x-\mu}{\theta}$. Then, since $\|X-\widehat X^{\alpha^{(n)}_{\theta,\mu}}\|_r$ converges faster to $0$ than $\|X-\widehat X^{\alpha^{(n)}}\|_r$ for $r>2$ if we consider an $L^2$-quantizer $\alpha^{(n)}$, one may expect to observe that
$$\big| \E f(X)-\E f(\widehat X^{\alpha^{(n)}_{\theta,\mu}})\big| \leq \big| \E f(X)-\E f(\widehat X^{\alpha^{(n)}})\big|.$$

To illustrate this numerically, we consider a one-dimensional example and approximate $\E f(X)$, where $X$ is a random variable with Normal distribution $\mathcal{N}(0,1)$ and $f$ is defined on $\R$ by $f(x)=x^4+\sin(x)$ and satisfies $(\ref{loclip})$ with $\beta=5$. To satisfy $(\ref{condr})$, we choose $r=5$ and implement the approximation by quadrature formulas based, on the one hand, on $L^2$-optimal and greedy sequences $\alpha^{(n)}$ and, on the other hand, on the $L^2$-dilated optimal and greedy quantizer $\alpha^{(n)}_{\theta^*,0}$, with $\theta^*=\sqrt{\frac{r+d}{2+d}}=\sqrt{2}$, which is $L^r$-rate optimal. The exact value of $\E f(X)$ is $3$. In figure \ref{ex1dilat}, we illustrate the errors induced by these approximations and we observe that, for a same size $n$ of the quantization sequence, the $L^2$-dilated quantizers $\alpha^{(n)}_{\theta^*,0}$ give more precise results than the standard sequences $\alpha^{(n)}$ themselves. \\

\begin{figure}
\begin{center}
\begin{tabular}{cc}
			\includegraphics[width=8cm,height=7cm,angle=0]{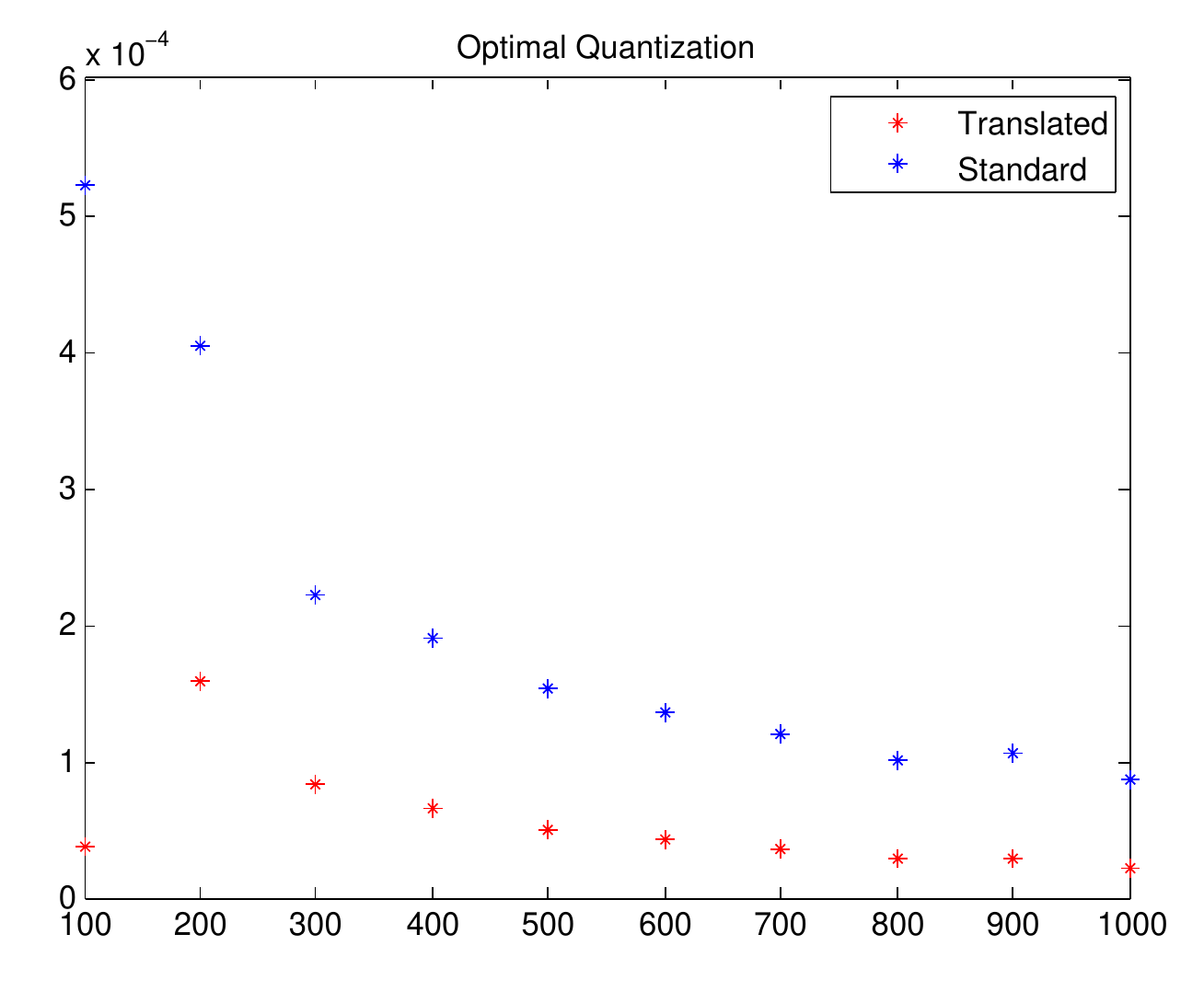} &
			\includegraphics[width=8cm,height=7cm,angle=0]{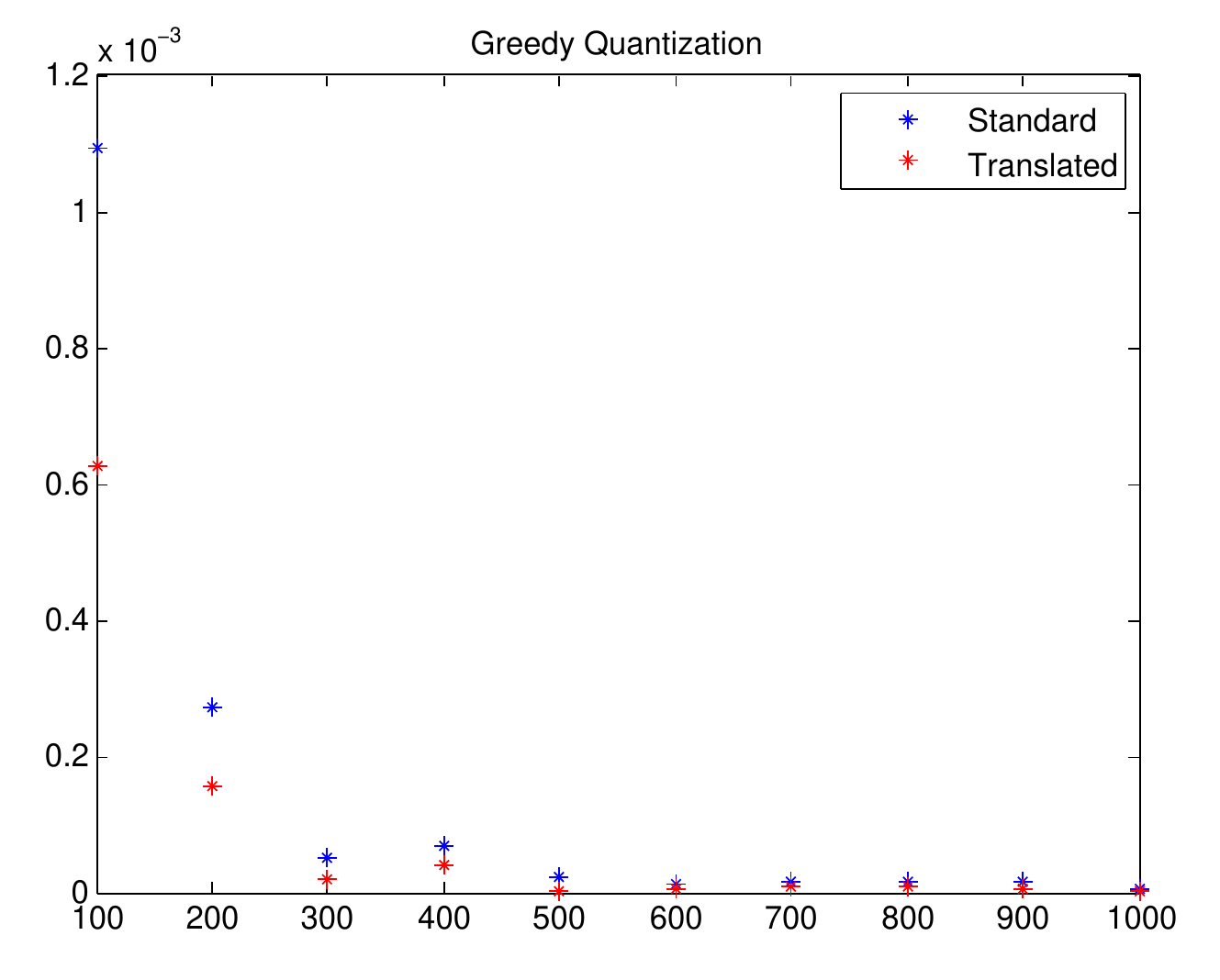} 
			\end{tabular}
		\end{center} 
		\vspace{-0.5cm}
		\caption{Errors of the approximation of $\E f(X)$, where $f(x)=x^{4}+\sin(x)$, by quadrature formulas based on $L^2$ quantizers (blue) and dilated $L^2$ quantizers (red) for different sizes $n$.}
		\label{ex1dilat}
\end{figure}
\noindent {\em Acknowledgments.} I would like to express a sincere gratitude to my supervisor, Pr. Gilles Pag\`es, for his help and advice during this work and to Dr. Rami El Haddad, my co-supervisor, as well. Also, I would like to acknowledge the National Council for Scientific Research of Lebanon (CNRS-L) for granting me a doctoral fellowship, in a joint program with Agence Universitaire de la Francophonie of the Middle East and the research council of Saint-Joseph University of Beirut.
\small

	\end{document}